\newtheorem{theorem}{Theorem}[section]
\numberwithin{equation}{section}
\newtheorem{proposition}[theorem]{Proposition}
\newtheorem{remark}[theorem]{Remark}
\newtheorem{lemma}[theorem]{Lemma}
\newtheorem{algorithm}[theorem]{Algorithm}
\titleformat{\section}{\normalfont\scshape\centering}{\thesection.}{0.5em}{}
\titleformat*{\subsection}{\itshape}
\titleformat*{\subsubsection}{\itshape}
\providecommand{\keywords}[1]
{
	{\small\emph{Keywords:} #1}
}
\providecommand{\MSC}[1]
{
	{\small\emph{AMS MSC (2020):~~} #1}
}
\definecolor{denim}{rgb}{0.08, 0.38, 0.74}
\definecolor{byzantium}{rgb}{0.44, 0.16, 0.39} 
\definecolor{shamrockgreen}{rgb}{0.0, 0.62, 0.38} 
\providecommand{\jumptmp}[2]{#1\llbracket{#2}#1\rrbracket}
\providecommand{\jump}[1]{\jumptmp{}{#1}}
\begin{document}
	\setlength{\abovedisplayskip}{5.5pt}
	\setlength{\belowdisplayskip}{5.5pt}
	\setlength{\abovedisplayshortskip}{5.5pt}
	\setlength{\belowdisplayshortskip}{5.5pt}

	\title{\vspace*{-1.5cm}\textit{A priori} and \textit{a posteriori} error identities\\ for the scalar Signorini problem}
	\author[1]{Sören Bartels\thanks{Email: \texttt{bartels@mathematik.uni-freiburg.de}}}
	\author[2]{Thirupathi Gudi\thanks{Email: \texttt{gudi@iisc.ac.in}}}
	\author[3]{Alex Kaltenbach\thanks{Email: \texttt{kaltenbach@math.tu-berlin.de}}}
	\date{\today}
	\affil[1]{\small{Department of Applied Mathematics, University of Freiburg, Hermann--Herder--Str. 10,  79104  Freiburg}}
	\affil[2]{\small{Department of Mathematics, Indian Institute of Science Bangalore, India 560012}}
	\affil[3]{\small{Institute of Mathematics, Technical University of Berlin, Stra\ss e des 17.\ Juni 135, 10623 Berlin}}
	\maketitle

	\pagestyle{fancy}
	\fancyhf{}
	\fancyheadoffset{0cm}
	\addtolength{\headheight}{-0.25cm}
	\renewcommand{\headrulewidth}{0pt} 
	\renewcommand{\footrulewidth}{0pt}
	\fancyhead[CO]{\textsc{Error identities for the scalar Signorini problem}}
	\fancyhead[CE]{\textsc{S. Bartels, T. Gudi and A. Kaltenbach}}
	\fancyhead[R]{\thepage}
	\fancyfoot[R]{}
	
	\begin{abstract}
		In this paper, on the basis of a (Fenchel) duality theory on the continuous level, we derive an \textit{a posteriori} error identity for arbitrary conforming approximations of the primal formulation and the dual formulation of the scalar Signorini problem.
		In addition, on the basis of a (Fenchel) duality theory on the discrete level, we derive an \textit{a priori} error identity that applies
		to the approximation of the primal formulation using the Crouzeix--Raviart element and  to  the approximation of the dual formulation using the Raviart--Thomas element, and leads~to quasi-optimal error decay rates without imposing additional~assumptions~on~the~contact~set and in arbitrary space dimensions.
	\end{abstract}
	
	\keywords{Scalar Signorini problem; convex duality; Crouzeix--Raviart element; Raviart--Thomas element; \textit{a priori} error identity; \textit{a posteriori} error identity.}
	
	\MSC{35J20; 49J40; 49M29; 65N30; 65N15; 65N50.}
	
	\section{Introduction}\thispagestyle{empty} 
	
	\hspace*{5mm}The \emph{scalar Signorini problem} is a model problem that captures non-trivial~effects~present \hspace*{-0.1mm}in \hspace*{-0.1mm}elastic \hspace*{-0.1mm}contact \hspace*{-0.1mm}problems.
	\hspace*{-0.1mm}It \hspace*{-0.1mm}is \hspace*{-0.1mm}a \hspace*{-0.1mm}non-linear problem \hspace*{-0.1mm}as \hspace*{-0.1mm}it \hspace*{-0.1mm}contains \hspace*{-0.1mm}a \hspace*{-0.1mm}non-linear \hspace*{-0.1mm}boundary~\hspace*{-0.1mm}\mbox{condition}: in a bounded domain $\Omega\subseteq \mathbb{R}^d$, $d\in \mathbb{N}$,
	the solution  $u\colon \overline{\Omega}\to \mathbb{R}$ of the scalar Signorini problem (\textit{i.e.}, the \emph{displacement field}) 
	on a part of the (topological) boundary $\Gamma_C\subseteq \partial\Omega$ (\textit{i.e.}, the \emph{contact set}) 
	is greater or equal to $\chi \colon \Gamma_C\to \mathbb{R}$ (\textit{i.e.}, the \emph{obstacle}) (\textit{cf}.\ \cite{Sigonrini59}). It can be expressed in form of a convex minimization problem with an optimality condition
	given~via~variational~inequality~(\textit{cf}.~\cite{Fichera64}).

	\subsection{Related contributions}\enlargethispage{15mm}

	\hspace*{5mm}Finite element approximation as well as its \textit{a priori} and \textit{a posteriori} error analysis~for~unilateral contact problems is an active area of research for many decades. There is a vast literature on this topic; including conforming, non-conforming, and hybrid finite element methods~(\textit{cf}.~\cite{Bel2000,BelBre01,BelRen03,BelBerAdeVoh09}), mixed (\textit{cf}.\ \cite{Sch11}), and mortar finite element methods (\textit{cf}.\ \cite{BelHilLab97}). These methods typically employ element-wise affine or quadratic polynomial finite elements, due to limited regularity of the solution of these nonlinear contact problems (\textit{cf}.\ Remark \ref{rem:regularity}).
	
	Due to scarcity, we refer to a few articles and references therein on this topic:
	\begin{itemize}[noitemsep,topsep=2pt]
		\item  In the context of \textit{a posteriori} error analyses that provide reliable and efficient error bounds, we refer the reader to the contributions \cite{HilSer05,WeiWoh09,BelBerAdeVoh12,KraVeeWal15}.
		
		\item   In the context of \textit{a priori} error analyses, in \cite{HueWoh05}, assuming that the solution lies in $H^{s+1}(\Omega)$, $s \in (\frac{1}{2},1]$, and that the contact  set $\Gamma_C$ has a certain regularity,
		quasi-optimal \textit{a priori} error estimates are derived; in  \cite{HilRen12}, assuming, again, that the solution lies in $H^{s+1}(\Omega)$, $s \in (\frac{1}{2},1]$, but no additional regularity of the contact set $\Gamma_C$,
		improved  quasi-optimal \textit{a priori} error estimates are derived; recently, in \cite{GuiHil15}, important and interesting results are established to obtain quasi-optimal \textit{a priori} error estimates for conforming finite element methods in two and three space dimensions without additional assumptions on the contact set $\Gamma_C$ when the solution lies in $H^{s+1}(\Omega)$, $s\hspace*{-0.1em} \in\hspace*{-0.1em}(\frac{1}{2},\frac{1}{2}+\frac{k}{2}]$, where~${k\hspace*{-0.1em}\in \hspace*{-0.1em}\{1,2\}}$~is~the~polynomial~degree~\mbox{being}~used;
		
		in \cite{ChoHil13, ChoHilYve15}, Nitsche's type methods with symmetric and non-symmetric variants are proposed and analyzed for the contact problem with  $H^{s+1}(\Omega)$, $s \in (\frac{1}{2},1]$, regular solution and  derived optimal order convergence in $H^1$-norm. A penalty method is formulated and its convergence at continuous and discrete level are studied in \cite{ChoHil13-penalty} for the two dimensional contact problem with $H^{s+1}(\Omega)$, $s \in (\frac{1}{2},1]$,   regular solution but without any assumption~on~the~contact~set~$\Gamma_C$, and further therein, the authors have established optimal convergence rates by deriving necessary relation between penalty parameter and the mesh size.\vspace*{-1mm}
	\end{itemize}

	\subsection{New contributions}\vspace*{-1mm}
	
	\hspace*{5mm}The contributions of the present paper to the existing literature are two-fold:\enlargethispage{5mm}
	
	\begin{itemize}[noitemsep,topsep=2pt]
		\item On the basis of (Fenchel) duality theory on the continuous level (combining approaches  from  \cite{RepinValdman18}, \cite{ChambollePock20}, and \cite{BartelsKaltenbach23}), we derive an \textit{a posteriori} error identity that applies to arbitrary conforming approximations of the primal  formulation and the dual formulation~of~the~scalar Signorini problem. More precisely, denoting by $u\in K$ and $z\in K^*$ the primal and dual solution, respectively,  for admissible approximations $v\in K$~and~${y\in K^*}$,~it~holds~that
		\begin{align}\label{intro:aposteriori}
			\begin{aligned} 
				&\tfrac{1}{2}\|\nabla v-\nabla u\|_{\Omega}^2+\langle z\cdot n,v-\chi\rangle_{\partial\Omega}-\langle g,v-\chi\rangle_{\Gamma_N}\\&\quad+\tfrac{1}{2}\|y-z\|_{\Omega}^2+\langle y\cdot n,u-\chi \rangle_{\partial\Omega}-\langle g,u-\chi\rangle_{\Gamma_N}	\\&= \tfrac{1}{2}\| \nabla v-y\|_{\Omega}^2+\langle y\cdot n, v-\chi\rangle_{\partial\Omega}-\langle g, v-\chi\rangle_{\Gamma_N} \,,
			\end{aligned}
		\end{align}
		In addition, the induced local refinement indicators of the primal-dual gap (\textit{a posteriori}) error estimator (\textit{i.e.}, the right-hand side of) can be employed in adaptive mesh-refinement. 
		
		\item On the basis of (Fenchel) duality theory on the discrete level, 
		analogously to the \textit{a posteriori} error identity  on the continuous level \eqref{intro:aposteriori}, we derive an \textit{a priori} error identity~that applies to the approximation of the primal formulation using the Crouzeix--Raviart element (\textit{cf}.\ \cite{CR73}) and the  approximation of the dual formulation using the Raviart--Thomas element~(\textit{cf}.~\cite{RT77}). More precisely,  denoting by $u_h^{cr}\in K_h^{cr}$ and $z_h^{rt}\in K_h^{rt,*}$ the discrete primal~and~discrete~dual solution, respectively,  for admissible approximations $v_h\hspace*{-0.1em}\in \hspace*{-0.1em} K_h^{cr}$ and $y_h\hspace*{-0.1em}\in \hspace*{-0.1em}K_h^{rt,*}$,~it~holds~that
		\begin{align}\label{intro:apriori}
			\begin{aligned} 
				&\tfrac{1}{2}\|\nabla_h v_h-\nabla_h u_h^{cr}\|_{\Omega}^2+( z_h^{rt}\cdot n,\pi_hv_h-\chi_h)_{\Gamma_C}\\&\quad
				+\tfrac{1}{2}\|\Pi_h y_h-\Pi_h z_h^{rt}\|_{\Omega}^2+(y_h\cdot n,\pi_h u_h^{cr}-\chi_h)_{\Gamma_C}\\&=
				\tfrac{1}{2}\| \nabla_h v_h-y_h\|_{\Omega}^2+(y_h\cdot n, \pi_h v_h-\chi_h)_{\Gamma_C}\,.
			\end{aligned}
		\end{align}
		From \hspace*{-0.1mm}the \hspace*{-0.1mm}\textit{a \hspace*{-0.1mm}priori} \hspace*{-0.1mm}error \hspace*{-0.1mm}identity \hspace*{-0.1mm}\eqref{intro:apriori}, \hspace*{-0.1mm}we \hspace*{-0.1mm}derive \hspace*{-0.1mm}quasi-optimal \hspace*{-0.1mm}error \hspace*{-0.1mm}decay~\hspace*{-0.1mm}rates~\hspace*{-0.1mm}without~\hspace*{-0.1mm}impo-sing additional assumptions on the regularity of the contact set $\Gamma_C$ for arbitrary~dimensions. This improves the existing literature (\textit{cf}.\ \cite{Wang99,HuaWang07,LiHuaLian21}) on \textit{a priori} error analyses for approximations of the scalar Signorini problem using the Crouzeix--Raviart element.\vspace*{-1mm}
	\end{itemize}

	\subsection{Outline}\vspace*{-1mm}
	
	\hspace{5mm}\textit{This article is organized as follows:} In Section \ref{sec:preliminaries}, we introduce the~notation, the~relevant~function spaces and  finite element spaces. In Section \ref{sec:continuous_signorini}, 
	a (Fenchel) duality theory for the continuous scalar Signorini problem is developed. 
	This (Fenchel) duality theory is used in Section \ref{sec:aposteriori} in the derivation of an \textit{a posteriori} error identity.
	In Section \ref{sec:discrete_signorini}, 
	a discrete (Fenchel) duality theory for the discrete scalar Signorini problem is developed. 
	This discrete (Fenchel) duality theory,~in~turn, is used in Section \ref{sec:apriori} in the derivation of an \textit{a priori} error identity, which, in turn, is used to derive error decay rates given only fractional regularity assumptions on the solution and the obstacle.
	In Section \ref{sec:experiments}, we carry out numerical experiments that support the findings~of~Section~\ref{sec:aposteriori}~and~Section~\ref{sec:apriori}.\enlargethispage{12.5mm}\newpage
	
	\section{Preliminaries}\label{sec:preliminaries}
	
	\hspace{5mm}Throughout the  article,  let ${\Omega\subseteq \mathbb{R}^d}$, ${d\in\mathbb{N}}$, be a bounded simplicial Lipschitz domain such that $\partial\Omega$ is divided into three disjoint
	 (relatively) open sets:~a~\mbox{Dirichlet}~part~${\Gamma_D\subseteq \partial\Omega}$~with~$\vert \Gamma_D\vert>0$\footnote{For a (Lebesgue) measurable set $M\subseteq \mathbb{R}^d$, $d\in \mathbb{N}$, we denote by $\vert M\vert $ its $d$-dimensional Lebesgue measure. For a $(d-1)$-dimensional submanifold $M\subseteq \mathbb{R}^d$, $d\in \mathbb{N}$, we denote by $\vert M\vert $ its $(d-1)$-dimensional~Hausdorff~measure.}, a Neumann part $\Gamma_N\subseteq \partial\Omega$, and a contact part $\Gamma_C\subseteq \partial\Omega$ such that $\partial\Omega=\overline{\Gamma}_D\cup\overline{\Gamma}_N\cup\overline{\Gamma}_C$.\enlargethispage{5mm}
	
	\subsection{Standard function spaces}
	
	\hspace{5mm}For a (Lebesgue) measurable set $\omega\subseteq \mathbb{R}^n$, $n\in \mathbb{N}$, and  (Lebesgue) measurable functions or vector fields $v,w\colon \omega\to \mathbb{R}^{\ell}$, $\ell\in \mathbb{N}$, we employ~the~inner~product $(v,w)_{\omega}\coloneqq \int_{\omega}{v\odot w\,\mathrm{d}x}$,
	whenever the right-hand side is well-defined, where $\odot\colon \mathbb{R}^{\ell}\times \mathbb{R}^{\ell}\to \mathbb{R}$ either denotes scalar multiplication~or~the Euclidean inner product. The integral mean over a (Lebesgue)~\mbox{measurable}~set~${\omega\subseteq\mathbb{R}^n}$,~${n\in \mathbb{N}}$,~with $\vert \omega\vert>0$ of an integrable function or vector field $v\colon \omega\to \mathbb{R}^{\ell}$, $\ell\in \mathbb{N}$,~is~defined~by~${\langle v\rangle_\omega\coloneqq \frac{1}{\vert \omega\vert}\int_{\omega}{v\,\mathrm{d}x}}$. 
	 
	For $m\in \mathbb{N}$ and an open set $\omega\subseteq \mathbb{R}^n$, $n\in \mathbb{N}$, we define the spaces
	\begin{align*}
		\begin{aligned}
			H^m(\omega)&\coloneqq \big\{v\in L^2(\omega)&&\hspace*{-3.25mm}\mid \mathrm{D}^{\boldsymbol{\alpha}} v\in L^2(\omega)\textup{ for all }\boldsymbol{\alpha}\in (\mathbb{N}_0)^n\text{ with }\vert \boldsymbol{\alpha}\vert \leq m\big\}\,,\\
			H(\textup{div};\omega)&\coloneqq \big\{y\in (L^2(\omega))^n&&\hspace*{-3.25mm}\mid \textup{div}\,y\in L^2(\omega)\big\}\,,
		\end{aligned}
	\end{align*}
	where $\mathrm{D}^{\boldsymbol{\alpha}}\coloneqq \frac{\partial^{\vert \boldsymbol{\alpha}\vert }}{\partial x_1^{\alpha_1}\cdot\ldots\cdot \partial x_n^{\alpha_n}}$ and $\vert \boldsymbol{\alpha}\vert \coloneqq\sum_{i=1}^n{\alpha_i}$ 
	for each multi-index ${\boldsymbol{\alpha}\coloneqq (\alpha_1,\ldots,\alpha_n)\in (\mathbb{N}_0)^n}$,~and the \emph{Sobolev  norm} $\|\cdot\|_{m,\omega}\coloneqq \|\cdot\|_{\omega}+\vert \cdot\vert_{m,\omega}$,~where~$\|\cdot\|_{\omega}\coloneqq ((\cdot,\cdot)_\omega)^{\smash{\frac{1}{2}}}$ and
  \begin{align*} 
		\vert\cdot\vert _{m,\omega}\coloneqq\Bigg(\sum_{\boldsymbol{\alpha}\in (\mathbb{N}_0)^n\,:\,0<\vert\boldsymbol{\alpha}\vert\leq m }{\|\mathrm{D}^{\boldsymbol{\alpha}}(\cdot)\|_{\omega}^2}\Bigg)^{\smash{\frac{1}{2}}}\,,\\[-7mm]
	\end{align*} 
	turns $H^m(\omega)$ into a Hilbert space.\enlargethispage{5mm}
	 
	For $s \in (0,\infty)\setminus \mathbb{N}$ and an open set $\omega\subseteq \mathbb{R}^n$, $n\in \mathbb{N}$, the  \emph{Sobolev--Slobodeckij semi-norm}, for every $v \in  H^m(\omega)$,~is~defined~by
	\begin{align*}
		\vert v\vert_{s,\omega}\coloneqq \Bigg(\sum_{\vert \boldsymbol{\alpha}\vert= m}{\int_{\omega}{\int_{\omega}{\frac{\vert(\mathrm{D}^{\boldsymbol{\alpha}} v)(x)-(\mathrm{D}^{\boldsymbol{\alpha}} v)(y)\vert^2}{\vert x-y\vert^{2\theta+d}}\,\mathrm{d}x}\,\mathrm{d}y}}\Bigg)^{\smash{\frac{1}{2}}}\,,\\[-6mm]
	\end{align*}
	where $m\in \mathbb{N}_0$ and $\theta\in (0,1)$ are such that $s=m+\theta$.	
	Then, for $s \in (0,\infty)\setminus \mathbb{N}$ and an open set $\omega\subseteq \mathbb{R}^n$, $n\in \mathbb{N}$,  the \emph{Sobolev--Slobodeckij space} is defined by\vspace*{-0.5mm}
	\begin{align*}
		H^s(\omega)\coloneqq \big\{v\in H^m(\omega)\mid \vert v\vert _{s,\omega}<\infty\big\}\,,\\[-6mm]
	\end{align*} 
	where $m\hspace*{-0.1em}\in\hspace*{-0.1em} \mathbb{N}_0$ and $\theta\hspace*{-0.1em}\in\hspace*{-0.1em} (0,1)$ are such that $s\hspace*{-0.1em}=\hspace*{-0.1em}m+\theta$ and the \emph{\mbox{Sobolev--Slobodeckij}~norm}\vspace*{-0.5mm}
	\begin{align*}
	\|\cdot\|_{s,\omega}\coloneqq \|\cdot\|_{m,\omega}+\vert \cdot\vert _{s,\omega}\\[-6mm]
	\end{align*}
	turns $H^s(\omega)$ into a Hilbert space.
	
	Denote by $\textup{tr}(\cdot)\colon \hspace*{-0.1em}H^1(\Omega)\hspace*{-0.1em}\to\hspace*{-0.1em} H^{\smash{\frac{1}{2}}}(\partial\Omega)$ the trace operator  and by ${\textup{tr}((\cdot)\cdot n)\colon H(\textup{div};\Omega)\hspace*{-0.1em}\to\hspace*{-0.1em} H^{-\smash{\frac{1}{2}}}(\partial\Omega)}$ the normal trace operator, where $n\colon \partial\Omega\to \mathbb{S}^{d-1}$ denotes the outward unit normal vector~field~to~$\partial\Omega$. 
	Then,  for every $v\hspace*{-0.1em}\in\hspace*{-0.1em} H^1(\Omega)$~and~${y\hspace*{-0.1em}\in\hspace*{-0.1em} H(\textup{div};\Omega)}$, there holds the integration-by-parts formula (\textit{cf}.\ \cite[Sec.\ 4.3, (4.12)]{EG21I})\vspace*{-0.5mm}
	\begin{align}\label{eq:pi_cont}
		(\nabla v,y)_{\Omega}+(v,\textup{div}\,y )_{\Omega}=\langle \textup{tr}(y)\cdot n,\textup{tr}(v)\rangle_{\partial\Omega}\,,\\[-6mm]\notag
	\end{align}
	where, for every $\widehat{y}\in H^{-\smash{\frac{1}{2}}}(\gamma)$, $\widehat{v}\in H^{\smash{\frac{1}{2}}}(\gamma)$, and $\gamma\in \{\Gamma_D,\Gamma_N,\Gamma_C,\partial\Omega\}$, we  abbreviate\vspace*{-0.5mm}
	\begin{align}\label{eq:abbreviation}
		\langle \widehat{y},\textup{tr}(\widehat{v})\rangle_{\gamma}\coloneqq \langle \widehat{y},\textup{tr}(\widehat{v})\rangle_{H^{\smash{\frac{1}{2}}}(\gamma)}\,.\\[-6mm]\notag
	\end{align}
	More precisely, in \eqref{eq:abbreviation}, for every subset $\gamma\subseteq \partial \Omega$ and $s >0$, the Hilbert space $H^s(\gamma)$~is~defined as the range of the restricted trace operator $\textup{tr}(\cdot)|_{\gamma}$ defined on $H^{s+\smash{\frac{1}{2}}}(\Omega)$
	 endowed with~the~image~norm, for every $w\in H^s(\gamma)$, defined by\vspace*{-0.5mm}
	\begin{align*}
		\|w\|_{s,\gamma}\coloneqq \inf_{v\in H^{s+\smash{\frac{1}{2}}}(\Omega)\;:\; \textup{tr}(v)|_{\gamma}=w}{\|v\|_{s +\smash{\frac{1}{2}},\Omega}}\,,\\[-6mm]
	\end{align*} 
	and $H^{-s}(\gamma)\coloneqq (H^s(\gamma))^*$ is defined as the corresponding topological dual space.
	
	Eventually, we employ the notation
	\begin{align*}
		\begin{aligned}
		 	H^1_D(\Omega)&\coloneqq  \big\{v\in 	H^1(\Omega) \mid \textup{tr}(v)=0\textup{ a.e.\ on }\Gamma_D\big\}\,,\\
			H^2_N(\textup{div};\Omega)&\coloneqq  \Bigg\{y\in H(\textup{div};\Omega)\;\bigg|\; \begin{aligned}&\langle\textup{tr}(y)\cdot n,\textup{tr}(v)\rangle_{\partial\Omega} =0\text{ for all  }v\in H^1_D(\Omega)\\&\text{with }\textup{tr}(v)=0\text{ a.e.\ on }\Gamma_C
			\end{aligned}\Bigg\}\,.
		\end{aligned}
	\end{align*}
	
	In what follows, we omit writing both $\textup{tr}(\cdot)$ and $\textup{tr}((\cdot)\cdot n)$ in this context.

	\subsection{Triangulations and standard finite element spaces}
	
	\hspace{5mm}Throughout the article, we denote by $\{\mathcal{T}_h\}_{h>0}$ a family of uniformly shape regular~triangula-tions of $\Omega\hspace*{-0.1em}\subseteq\hspace*{-0.1em} \mathbb{R}^d$, $d\hspace*{-0.1em}\in\hspace*{-0.1em}\mathbb{N}$, (\emph{cf}.\  \cite{EG21I}). 
	 Here, $h\hspace*{-0.1em}>\hspace*{-0.1em}0$ refers to the \emph{averaged mesh-size},~\textit{i.e.},~${h 
	 	\hspace*{-0.1em}=\hspace*{-0.1em} (\frac{\vert \Omega\vert}{\textup{card}(\mathcal{N}_h)})^{\frac{1}{d}}}
	 $, where $\mathcal{N}_h$  contains the vertices of the triangulation $\mathcal{T}_h$. We define the following sets of sides:
	 \begin{align*}
	 	\mathcal{S}_h&\coloneqq \mathcal{S}_h^{i}\cup \mathcal{S}_h^{\partial\Omega}\,,\\[-0.5mm]
	 	\mathcal{S}_h^{i}&\coloneqq  \big\{T\cap T'\mid T,T'\in\mathcal{T}_h\,,\text{dim}_{\mathscr{H}}(T\cap T')=d-1\big\}\,,\\[-0.5mm]
	 	\mathcal{S}_h^{\partial\Omega}&\coloneqq\big\{T\cap \partial\Omega\mid T\in \mathcal{T}_h\,,\text{dim}_{\mathscr{H}}(T\cap \partial\Omega)=d-1\big\}\,,\\[-0.5mm]
	 	\mathcal{S}_h^\gamma&\coloneqq\big\{S\in \mathcal{S}_h^{\partial\Omega}\mid \textup{int}(S)\subseteq \gamma\big\}\text{ for } \gamma\in \big\{\Gamma_D,\Gamma_N,\Gamma_C\big\}\,,
	 \end{align*}
	 where the Hausdorff dimension is defined by $\text{dim}_{\mathscr{H}}(M)\hspace{-0.15em}\coloneqq\hspace{-0.15em}\inf\{d'\hspace{-0.15em}\geq\hspace{-0.15em} 0\mid \mathscr{H}^{d'}(M)\hspace{-0.15em}=\hspace{-0.15em}0\}$~for~all~${M\hspace{-0.15em}\subseteq \hspace{-0.15em} \mathbb{R}^d}$.
	 It is also assumed that the triangulations $\{\mathcal{T}_h\}_{h>0}$ and  boundary parts $\Gamma_D$, $\Gamma_C$,~and~$\Gamma_N$~are~chosen such that  $\mathcal{S}_h^{\partial\Omega}=\mathcal{S}_h^{\Gamma_D}\dot{\cup}\mathcal{S}_h^{\Gamma_C}\dot{\cup} \mathcal{S}_h^{\Gamma_N}$, \textit{e.g.}, in the case $d=2$,  $\overline{\Gamma}_D$, $\overline{\Gamma}_C$, and $\overline{\Gamma}_N$~touch~only~in~vertices.

	For $k\in \mathbb{N}_0$ and $T\in \mathcal{T}_h$, let $\mathbb{P}^k(T)$ denote the set of polynomials of maximal~degree~$k$~on~$T$. Then, for $k\in \mathbb{N}_0$, the set of  element-wise polynomial functions  is defined by
	\begin{align*}
			\smash{\mathcal{L}^k(\mathcal{T}_h)\coloneqq \big\{v_h\in L^\infty(\Omega)\mid v_h|_T\in\mathbb{P}^k(T)\text{ for all }T\in \mathcal{T}_h\big\}\,.}
	\end{align*} 
	For \hspace*{-0.1mm}$\ell\hspace*{-0.175em}\in\hspace*{-0.175em} \mathbb{N}$, \hspace*{-0.1mm}the \hspace*{-0.1mm}(local) \hspace*{-0.1mm}$L^2$-projection \hspace*{-0.1mm}$\Pi_h\colon \hspace*{-0.175em} (L^1(\Omega))^{\ell}\hspace*{-0.175em}\to\hspace*{-0.175em} (\mathcal{L}^0(\mathcal{T}_h))^{\ell}\hspace*{-0.175em}$ \hspace*{-0.1mm}onto \hspace*{-0.1mm}element-wise~\hspace*{-0.1mm}constant~\hspace*{-0.1mm}\mbox{functions} or vector fields, respectively, for every 
$v\in  (L^1(\Omega))^{\ell} $ is defined by $\Pi_h v|_T\coloneqq\langle v\rangle_T$~for~all~${T\in \mathcal{T}_h}$.  
	
		For $m\in \mathbb{N}_0$ and $S\in \mathcal{S}_h$, let $\mathbb{P}^m(S)$ denote the set of polynomials of maximal degree~$m$~on~$S$. Then, for $m\hspace*{-0.175em}\in\hspace*{-0.175em} \mathbb{N}_0$ and $\mathcal{M}_h\hspace*{-0.175em}\in\hspace*{-0.175em} \{\mathcal{S}_h,\mathcal{S}^{i}_h,\mathcal{S}^{\partial\Omega}_h,\mathcal{S}^{\Gamma_D}_h,\mathcal{S}^{\Gamma_C}_h,\mathcal{S}^{\Gamma_N}_h\}$, the set of side-wise~polynomial functions  is defined by
	\begin{align*}
	\smash{\mathcal{L}^m(\mathcal{M}_h)\coloneqq  \big\{v_h\in L^\infty(\cup\mathcal{M}_h)\mid v_h|_S\in\mathbb{P}^m(S)\text{ for all }S\in \mathcal{M}_h\big\}\,.}
	\end{align*} 
	For \hspace*{-0.1mm}$\ell\hspace*{-0.175em}\in\hspace*{-0.175em} \mathbb{N}$, \hspace*{-0.1mm}the \hspace*{-0.1mm}(local) \hspace*{-0.1mm}$L^2$-projection \hspace*{-0.1mm}$\pi_h\colon\hspace*{-0.175em} (L^1(\cup\mathcal{S}_h))^{\ell}\hspace*{-0.175em}\to\hspace*{-0.175em} (\mathcal{L}^0(\mathcal{S}_h))^{\ell}$ \hspace*{-0.1mm}onto \hspace*{-0.1mm}side-wise \hspace*{-0.1mm}constant~\hspace*{-0.1mm}\mbox{functions} or vector  fields, respectively,  for every 
	$v_h\in  (L^1(\cup\mathcal{S}_h))^{\ell}$ is defined by $	\pi_h v_h|_S\coloneqq \langle v_h\rangle_S$~for~all~${S\in \mathcal{S}_h}$. 
	
	For  every $v_h\in \mathcal{L}^m(\mathcal{T}_h)$, $m\in \mathbb{N}_0$, and $S\in\mathcal{S}_h$, the \emph{jump across} $S$ is defined by 
		\begin{align*}
			\jump{v_h}_S\coloneqq\begin{cases}
				v_h|_{T_+}-v_h|_{T_-}&\text{ if }S\in \mathcal{S}_h^{i}\,,\text{ where }T_+, T_-\in \mathcal{T}_h\text{ satisfy }\partial T_+\cap\partial T_-=S\,,\\
				v_h|_T&\text{ if }S\in\mathcal{S}_h^{\partial\Omega}\,,\text{ where }T\in \mathcal{T}_h\text{ satisfies }S\subseteq \partial T\,.
			\end{cases}
	\end{align*}
	
	For  every $y_h\in (\mathcal{L}^m(\mathcal{T}_h))^d$, $m\in \mathbb{N}_0$, and $S\in\mathcal{S}_h$, the \emph{normal jump across} $S$ is defined by 
		\begin{align*}
			\jump{y_h\cdot n}_S\coloneqq\begin{cases}
				y_h|_{T_+}\!\cdot n_{T_+}+y_h|_{T_-}\!\cdot n_{T_-}&\text{ if }S\in \mathcal{S}_h^{i}\,,\text{ where }T_+, T_-\in \mathcal{T}_h\text{ satisfy }\partial T_+\cap\partial T_-=S\,,\\
				y_h|_T\cdot n&\text{ if }S\in\mathcal{S}_h^{\partial\Omega}\,,\text{ where }T\in \mathcal{T}_h\text{ satisfies }S\subseteq \partial T\,,
			\end{cases}
		\end{align*}
		where, for every $T\in \mathcal{T}_h$, we denote by $\smash{n_T\colon\partial T\to \mathbb{S}^{d-1}}$ the outward unit normal vector field~to~$ T$.

	\subsubsection{Crouzeix--Raviart element}
	
	\qquad The \emph{Crouzeix--Raviart finite element space} (\emph{cf}.\ \cite{CR73}) is defined as the space of element-wise affine functions that are continuous in the barycenters of interior sides, \textit{i.e.},
	\begin{align*}\mathcal{S}^{1,cr}(\mathcal{T}_h)\coloneqq \big\{v_h\in \mathcal{L}^1(\mathcal{T}_h)\mid \pi_h\jump{v_h}=0\text{ a.e.\ on }\cup \mathcal{S}_h^{i}\big\}\,.
	\end{align*}
	The Crouzeix--Raviart finite element space with homogeneous Dirichlet boundary condition~on~$\Gamma_D$ is defined by
	\begin{align*}
		\smash{\mathcal{S}^{1,cr}_D(\mathcal{T}_h)}\coloneqq \big\{v_h\in\smash{\mathcal{S}^{1,cr}(\mathcal{T}_h)}\mid  \pi_h v_h=0\text{ a.e.\ on }\cup \mathcal{S}_h^{\Gamma_D}\big\}\,.
	\end{align*}
	A basis of $\smash{\mathcal{S}^{1,cr}(\mathcal{T}_h)}$ is given via  
	$\varphi_S\hspace*{-0.1em}\in\hspace*{-0.1em} \smash{\mathcal{S}^{1,cr}(\mathcal{T}_h)}$, $S\hspace*{-0.1em}\in\hspace*{-0.1em} \mathcal{S}_h$, satisfying 
	 $\varphi_S(x_{S'})\hspace*{-0.1em}=\hspace*{-0.1em}\delta_{S,S'}$~for~all~${S,S'\hspace*{-0.1em}\in\hspace*{-0.1em} \mathcal{S}_h}$.
	The \emph{(Fortin) quasi-interpolation operator} $\smash{\Pi_h^{cr}\colon\hspace*{-0.15em} H^1(\Omega)\hspace*{-0.15em}\to\hspace*{-0.15em} \smash{\mathcal{S}^{1,\emph{cr}}(\mathcal{T}_h)}}$ (\textit{cf}.\ \cite[Secs.\ 36.2.1,~36.2.2]{EG21II}), for every $v\in H^1(\Omega)$ defined by
	\begin{align}\label{def:CR-Interpolant}
		\Pi_h^{cr}v\coloneqq \sum_{S\in \mathcal{S}_h}{\langle v\rangle_S\,\varphi_S}\,,
	\end{align}
	preserves averages of gradients and of  moments (on sides), \textit{i.e.}, for every $v\in H^1(\Omega)$,~it~holds~that
	\begin{alignat}{2}
		\nabla_h\Pi_h^{cr}v&=\Pi_h\nabla v&&\quad\text{ a.e.\ in }\Omega\,,\label{eq:grad_preservation}\\
		\pi_h\Pi_h^{cr}v&=\pi_h  v&&\quad \text{ a.e.\ on } \cup\mathcal{S}_h\,.\label{eq:trace_preservation}
	\end{alignat}
	Here, $\nabla_h\colon \mathcal{L}^1(\mathcal{T}_h)\to (\mathcal{L}^0(\mathcal{T}_h))^d$,  defined by $(\nabla_hv_h)|_T\coloneqq \nabla(v_h|_T)$ for all $v_h\in \mathcal{L}^1(\mathcal{T}_h)$ and ${T\in \mathcal{T}_h}$, denotes the element-wise gradient operator. 
	
For every $s \in [0,1]$, there  exists a constant $c>0$ (\emph{cf}.\ \cite[Lem.\ 36.1]{EG21II}), independent of $h>0$,
 such that for every 
	$v\in H^{1+s}(\Omega)$ and $T\in \mathcal{T}_h$,~it~holds~that
	\begin{align}\label{eq:CR-Interpolant-Rate}
		\|v-\Pi_h^{cr} v\|_T+h_T\, \|\nabla v-\nabla \Pi_h^{cr} v\|_T\leq c\,h_T^{1+s}\,\vert v\vert_{1+s,T}\,.
	\end{align}  
	
	\subsubsection{Raviart--Thomas element}
	
	\qquad The \emph{(lowest order) Raviart--Thomas finite element space} (\emph{cf}.\ \cite{RT77}) is defined as the space of element-wise affine vector fields that have continuous constant normal components~on~interior~sides, \textit{i.e.},\enlargethispage{5mm}
	\begin{align*}
		\mathcal{R}T^0(\mathcal{T}_h)\coloneqq \left\{y_h\in(\mathcal{L}^1(\mathcal{T}_h))^d\;\bigg|\; \begin{aligned}
			&y_h|_T\cdot n_T=\textup{const}\text{ on }\partial T\text{ for all }T\in \mathcal{T}_h\,,\\ 
			&	\jump{y_h\cdot n}_S=0\text{ on }S\text{ for all }S\in \mathcal{S}_h^{i}
		\end{aligned}\right\}\,.
	\end{align*}
	The Raviart--Thomas finite element space with homogeneous normal boundary condition on $\Gamma_N$ is defined by
	\begin{align*}
		\mathcal{R}T^{0}_N(\mathcal{T}_h)\coloneqq \big\{y_h\in	\mathcal{R}T^0(\mathcal{T}_h)\mid y_h\cdot n=0\text{ a.e.\ on }\Gamma_N\big\}\,.
	\end{align*} 
	A basis of $\mathcal{R}T^0(\mathcal{T}_h)$ is given via 
	 vector fields $\psi_S\in \mathcal{R}T^0(\mathcal{T}_h)$, $S\in \mathcal{S}_h$, satisfying   $\psi_S|_{S'}\cdot n_{S'}=\delta_{S,S'}$ on $S'$ for all $S'\in \mathcal{S}_h$, where $n_S\in \mathbb{S}^{d-1}$ for all $S\in \mathcal{S}_h$ is the fixed unit normal vector on $S$ pointing from $T_-$ to $T_+$ if $T_+\cap T_-=S\in \mathcal{S}_h$. 
	For every $s>\frac{1}{2}$,
	the \emph{(Fortin) quasi-interpolation~operator} $\Pi_h^{rt}\colon (H^s(\Omega))^d\to \mathcal{R}T^0(\mathcal{T}_h)$ (\textit{cf}.\ \cite[Sec.\ 16.1]{EG21I}), for every $y\in (H^s(\Omega))^d$ defined by
	\begin{align}\label{def:RT-Interpolant}
		\Pi_h^{rt} y\coloneqq \sum_{S\in \mathcal{S}_h}{\langle y\cdot n_S\rangle_S\,\psi_S}\,,
	\end{align}
	preserves averages of divergences and of normal traces, \textit{i.e.}, for every $y\in (H^s(\Omega))^d\cap H(\textup{div};\Omega)$, it holds that
	\begin{alignat}{2}
		\textup{div}\,\Pi_h^{rt}y&=\Pi_h\textup{div}\,y&&\quad \text{ a.e.\ in }\Omega\,,\label{eq:div_preservation}\\
		\Pi_h^{rt}y\cdot n&=\pi_hy\cdot n&&\quad \text{ a.e\ on }\cup\mathcal{S}_h\,.\label{eq:normal_trace_preservation}
	\end{alignat}

	For every $s\hspace*{-0.15em} \in \hspace*{-0.15em}(\frac{1}{2},1]$, there exists a constant $c\hspace*{-0.15em}>\hspace*{-0.15em}0$ (\emph{cf}.\ \cite[Thms.\ 16.4, 16.6]{EG21I}),~\mbox{independent}~of~${h\hspace*{-0.15em}>\hspace*{-0.15em}0}$, such that for every $y\in (H^s(\Omega))^d\cap H(\textup{div};\Omega)$ and $T\in \mathcal{T}_h$, it holds that
	\begin{align}
		\|y-\Pi_h^{rt} y\|_T\leq c\,h_T^{s}\,\vert y\vert_{s,T}\,.\label{eq:RT-Interpolant-Rate}
	\end{align} 
	
	
	For every $v_h\in \mathcal{S}^{1,cr}(\mathcal{T}_h)$ and ${y_h\in \mathcal{R}T^0(\mathcal{T}_h)}$, we have the \emph{discrete integration-by-parts
		formula}\enlargethispage{2.5mm}
	\begin{align}
		(\nabla_hv_h,\Pi_h y_h)_{\Omega}+(\Pi_h v_h,\,\textup{div}\,y_h)_{\Omega}=(\pi_h v_h,y_h\cdot n)_{\partial\Omega}\,.\label{eq:pi}
	\end{align}  

	\newpage
	
	\section{Scalar Signorini problem}\label{sec:continuous_signorini}
	
	\qquad In this section, we discuss the (continuous) scalar Signorini problem.\medskip

	\hspace*{-2.5mm}$\bullet$ \emph{Primal problem.} Given $f\in L^2(\Omega)$, $g\in H^{-\smash{\frac{1}{2}}}(\Gamma_N)$, $u_D\in H^{\frac{1}{2}}(\Gamma_D)$, and
	$\chi\in H^1(\Omega)$~with~${\chi=u_D}$ a.e.\ on $\Gamma_D$,
	the \emph{scalar Signorini problem} is given via the minimization of $I\colon H^1(\Omega)\to \mathbb{R}\cup\{+\infty\}$, for every $v\in H^1(\Omega)$ defined by 
	\begin{align}
		\begin{aligned}
		I(v)&\coloneqq \tfrac{1}{2}\| \nabla v\|_{\Omega}^2-(f,v)_{\Omega}-\langle g,v\rangle_{\Gamma_N}+I_K(v)
		\\&\,=\tfrac{1}{2}\| \nabla v\|_{\Omega}^2-(f,v)_{\Omega}-\langle g,v\rangle_{\Gamma_N}+\smash{I_{\{u_D\}}^{\Gamma_D}}(v)+\smash{I_+^{\Gamma_C}}(v-\chi)\,,
			\end{aligned}\label{eq:primal}
	\end{align}
	where 
	\begin{align*}
		K\coloneqq \big\{v\in H^1(\Omega)\mid v = u_D\text{ a.e.\ on }\Gamma_D\,,\; v\ge \chi\text{ a.e.\ on }\Gamma_C\big\}\,,
	\end{align*}
	and $I_K\coloneqq \smash{I_{\{u_D\}}^{\Gamma_D}}+\smash{I_+^{\Gamma_C}}((\cdot)-\chi),\smash{I_{\{u_D\}}^{\Gamma_D}},\smash{I_+^{\Gamma_C}}\colon H^1(\Omega)\to\mathbb{R}\cup\{+\infty\}$, for every $\widehat{v}\in H^1(\Omega)$~are~\mbox{defined}~by 
	\begin{align*}
		\smash{I_{\{u_D\}}^{\Gamma_D}}(\widehat{v})&\coloneqq \begin{cases}
			0&\text{ if }\widehat{v}=u_D\text{ a.e.\ on }\Gamma_D\,,\\
			+\infty &\text{ else}\,,
		\end{cases}\\
			\smash{I_+^{\Gamma_C}}(\widehat{v})&\coloneqq \begin{cases}
			0&\text{ if }\widehat{v}\ge 0\text{ a.e.\ on }\Gamma_C\,,\\
			+\infty &\text{ else}\,.
		\end{cases}
	\end{align*}
	Throughout the article, we refer to the minimization of the functional \eqref{eq:primal} as the \emph{primal problem}.\\
	Since the functional \eqref{eq:primal} is proper, strictly convex, weakly coercive, and lower semi-continuous, the direct method in the calculus of variations yields the existence of a unique~minimizer~$u\in K$, 
	called \emph{primal solution}. In what follows, we reserve the notation $u\in K$ for the primal solution.\enlargethispage{7,5mm}

	\hspace*{-2.5mm}$\bullet$ \emph{Primal variational inequality.} The primal solution $u\in K$ equivalently is the  unique solution of the following variational inequality: 
	for every $v\in K$, it holds that
	\begin{align}
		(\nabla u,\nabla u-\nabla v)_{\Omega}\leq (f,u-v)_{\Omega}+\langle g,u-v\rangle_{\Gamma_N}\,.\label{eq:variational_ineq}
	\end{align}

	\hspace*{-2.5mm}$\bullet$ \emph{Dual problem.} A \emph{(Fenchel) dual problem}  to the scalar Signorini problem is given via the maximization of $D\colon H(\textup{div};\Omega)\to \mathbb{R}\cup\{-\infty\}$, for every $y\in H(\textup{div};\Omega)$ defined by
	\begin{align}
		\begin{aligned} 
		D(y)&\coloneqq -\tfrac{1}{2}\|y\|_{\Omega}^2+\langle y\cdot n,\chi\rangle_{\partial\Omega}-\langle g,\chi\rangle_{\Gamma_N}-I_{K^*}(y)
		\\&\,=-\tfrac{1}{2}\|y\|_{\Omega}^2+\langle y\cdot n,\chi\rangle_{\partial\Omega}-\langle g,\chi\rangle_{\Gamma_N}
		-\smash{I_{\{-f\}}^{\Omega}}(\textup{div}\,y)-\smash{I_{\{g\}}^{\Gamma_N}}(y\cdot n)-\smash{I_+^{\Gamma_C}}(y\cdot n)\,,
	\end{aligned}\label{eq:dual}
	\end{align}
	where
	\begin{align*}
		K^*\coloneqq \big\{y\in H(\textup{div};\Omega)\mid \smash{I_{\{-f\}}^{\Omega}}(\textup{div}\,y)=\smash{I_{\{g\}}^{\Gamma_N}}(y\cdot n)=\smash{I_+^{\Gamma_C}}(y\cdot n)=0\big\}\,,
	\end{align*}
	 $I_{K^*}\coloneqq \smash{I_{\{-f\}}^{\Omega}}(\textup{div}\,\cdot)+(\smash{I_{\{g\}}^{\Gamma_N}}+\smash{I_+^{\Gamma_C}})((\cdot)\cdot n)\colon H(\textup{div};\Omega)\to \mathbb{R}\cup\{+\infty\}$, 
	$\smash{I_{\{-f\}}^{\Omega}}\colon L^2(\Omega)\to\mathbb{R}\cup\{+\infty\}$, for every $\widehat{y}\in L^2(\Omega)$ is defined by
	\begin{align*}
			\smash{I_{\{-f\}}^{\Omega}}(\widehat{y})\coloneqq \begin{cases}
			0&\text{ if }\widehat{y}=-f\text{ a.e.\ in }\Omega\,,\\
			+\infty &\text{ else}\,,
		\end{cases}
	\end{align*}
	and 
	$\smash{I_{\{g\}}^{\Gamma_N}},\smash{I_+^{\Gamma_C}}\colon H^{-\smash{\frac{1}{2}}}(\partial\Omega)\to\mathbb{R}\cup\{+\infty\}$, for every $\widehat{y}\in H^{-\smash{\frac{1}{2}}}(\partial\Omega)$, are defined by 
	\begin{align*}
	\smash{I_{\{g\}}^{\Gamma_N}}(\widehat{y})&\coloneqq \begin{cases}
			0&\text{ if }\langle \widehat{y},v\rangle_{\partial\Omega}=\langle g, v\rangle_{\Gamma_N}\text{ for all }v\in H^1_D(\Omega)\text{ with }v=0\text{ a.e.\ on }\Gamma_C\,,\\
			+\infty &\text{ else}\,,
		\end{cases}\\
		\smash{I_+^{\Gamma_C}}(\widehat{y})&\coloneqq \begin{cases}
			0&\text{ if }\langle \widehat{y},v\rangle_{\Gamma_C}\ge 0\text{ for all }v\in H^1_D(\Omega)\\&\text{ with }v=0\text{ a.e.\ on }\Gamma_N\text{ and }v\ge 0\text{ a.e.\ on }\Gamma_C\,,\\
			+\infty &\text{ else}\,.
		\end{cases}
	\end{align*}
	
	The identification of the (Fenchel) dual problem (in the sense of \cite[Rem.\ 4.2, p.\ 60/61]{ET99}) to the minimization of \eqref{eq:primal} with the maximization of  \eqref{eq:dual} can be found in the proof of the following result that also establishes the validity of a strong~duality~relation and convex~optimality~relations.\newpage
	
	\begin{proposition}[strong duality and convex duality relations]\label{prop:duality} The following statements apply:\enlargethispage{5mm}
		\begin{itemize}[noitemsep,topsep=2pt,leftmargin=!,labelwidth=\widthof{(ii)}]
			\item[(i)]  A \hspace*{-0.1mm}(Fenchel) \hspace*{-0.1mm}dual \hspace*{-0.1mm}problem \hspace*{-0.1mm}to \hspace*{-0.1mm}the \hspace*{-0.1mm}scalar \hspace*{-0.1mm}Signorini \hspace*{-0.1mm}problem \hspace*{-0.1mm}is \hspace*{-0.1mm}given \hspace*{-0.1mm}via \hspace*{-0.1mm}the \hspace*{-0.1mm}\mbox{maximization}~\hspace*{-0.1mm}of~\hspace*{-0.1mm}\eqref{eq:dual}.  
			\item[(ii)]  There exists a unique maximizer $z\in H(\textup{div};\Omega)$ of \eqref{eq:dual} satisfying the \textup{admissibility~\mbox{conditions}} 
			\begin{align}
					\textup{div}\,z&=-f\quad\text{ a.e.\ in }\Omega\label{eq:admissibility.1}\,,\\ 
				\smash{I_{\{g\}}^{\Gamma_N}}(z\cdot n)&= 0\label{eq:admissibility.2}\,,\\ 
				\smash{I_+^{\Gamma_C}}(z\cdot n) &= 0 \label{eq:admissibility.3}\,.
			\end{align}
			In addition, there
			holds a \textup{strong duality relation}, \textit{i.e.}, it holds that 
			\begin{align}
				I(u) = D(z)\,.\label{eq:strong_duality}
			\end{align}
			\item[(iii)] There hold  \textup{convex optimality relations}, \textit{i.e.}, it holds that 
			\begin{alignat}{2}
				z&=\nabla u\quad\text{ a.e.\ in }\Omega\,,\label{eq:optimality.1}\\
			\langle z\cdot n,u-\chi \rangle_{\partial\Omega}&=\langle g,u-\chi\rangle_{\Gamma_N}\label{eq:optimality.2}\,.
			\end{alignat} 
		\end{itemize}
	\end{proposition}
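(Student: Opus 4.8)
The plan is to place the primal problem into the Fenchel--Rockafellar framework (as in \cite{ET99}), to recognize \eqref{eq:dual} as the associated dual problem, and then to read off \eqref{eq:strong_duality}--\eqref{eq:optimality.2} from the standard strong-duality theorem together with the Fenchel--Young inequality. Concretely, I would write $I=F\circ\Lambda+G$ on $V\coloneqq H^1(\Omega)$, with $\Lambda\coloneqq\nabla\colon H^1(\Omega)\to Y\coloneqq(L^2(\Omega))^d$, $F\coloneqq\tfrac12\|\cdot\|_\Omega^2$, and $G\coloneqq-(f,\cdot)_\Omega-\langle g,\cdot\rangle_{\Gamma_N}+I_K$. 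Then $F^*=\tfrac12\|\cdot\|_\Omega^2$, and the key computation is that, for $y\in Y$,
\begin{align*}
	G^*(-\Lambda^*y)=\sup_{v\in K}\big\{-(y,\nabla v)_\Omega+(f,v)_\Omega+\langle g,v\rangle_{\Gamma_N}\big\}\,;
\end{align*}
testing with $v=\chi+w$ for $w\in C_c^\infty(\Omega)$ forces $\textup{div}\,y=-f$ (hence $y\in H(\textup{div};\Omega)$) for finiteness, and then the integration-by-parts formula \eqref{eq:pi_cont} reduces the supremum to $-\langle y\cdot n,\chi\rangle_{\partial\Omega}+\langle g,\chi\rangle_{\Gamma_N}$ plus the supremum of the linear functional $w\mapsto-\langle y\cdot n,w\rangle_{\partial\Omega}+\langle g,w\rangle_{\Gamma_N}$ over the convex cone $\{w\in H^1_D(\Omega)\mid w\ge0\text{ a.e.\ on }\Gamma_C\}$. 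Such a conical supremum is either $0$ or $+\infty$, and restricting to $w$ vanishing on $\Gamma_C$ (resp.\ on $\Gamma_N$) shows that it equals $0$ exactly when $\smash{I_{\{g\}}^{\Gamma_N}}(y\cdot n)=\smash{I_+^{\Gamma_C}}(y\cdot n)=0$. Therefore $G^*(-\Lambda^*y)=-\langle y\cdot n,\chi\rangle_{\partial\Omega}+\langle g,\chi\rangle_{\Gamma_N}+I_{K^*}(y)$, so that $-F^*(y)-G^*(-\Lambda^*y)=D(y)$; this is assertion~(i).

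For (ii), note that $\chi\in K$ (because $\chi=u_D$ a.e.\ on $\Gamma_D$), so $G(\chi)<\infty$, while $F$ is everywhere finite and continuous; as $\inf_VI=I(u)$ is finite, the strong-duality theorem (\textit{cf}.\ \cite[Ch.\ III]{ET99}) yields both $I(u)=\sup_{y\in H(\textup{div};\Omega)}D(y)$, which is \eqref{eq:strong_duality}, and the existence of a maximizer $z$; since $D(z)>-\infty$ one has $I_{K^*}(z)=0$, i.e.\ the admissibility conditions \eqref{eq:admissibility.1}--\eqref{eq:admissibility.3}. Uniqueness follows from strict concavity: if $z_1,z_2$ both maximize $D$, then by convexity of $K^*$ and the parallelogram identity $D(\tfrac{z_1+z_2}{2})=\tfrac12D(z_1)+\tfrac12D(z_2)+\tfrac18\|z_1-z_2\|_\Omega^2$, forcing $z_1=z_2$ in $(L^2(\Omega))^d$, hence (both divergences equal $-f$) in $H(\textup{div};\Omega)$.

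For (iii), I would rewrite \eqref{eq:strong_duality} as
\begin{align*}
	\big(F(\nabla u)+F^*(z)-(z,\nabla u)_\Omega\big)+\big(G(u)+G^*(-\Lambda^*z)+(z,\nabla u)_\Omega\big)=0\,,
\end{align*}
where the terms $\mp(z,\nabla u)_\Omega$ cancel and each parenthesis is nonnegative by the Fenchel--Young inequality, so both vanish. The first equality, together with $\partial F(\nabla u)=\{\nabla u\}$, gives $z=\nabla u$ a.e.\ in $\Omega$, which is \eqref{eq:optimality.1}. In the second, inserting $G(u)=-(f,u)_\Omega-\langle g,u\rangle_{\Gamma_N}$, $G^*(-\Lambda^*z)=-\langle z\cdot n,\chi\rangle_{\partial\Omega}+\langle g,\chi\rangle_{\Gamma_N}$ and, via \eqref{eq:pi_cont} and \eqref{eq:admissibility.1}, the identity $(z,\nabla u)_\Omega=(f,u)_\Omega+\langle z\cdot n,u\rangle_{\partial\Omega}$, leaves $\langle z\cdot n,u-\chi\rangle_{\partial\Omega}=\langle g,u-\chi\rangle_{\Gamma_N}$, which is \eqref{eq:optimality.2}.

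The step I expect to be the main obstacle is the evaluation of the conical supremum in (i): showing that $\smash{I_{\{g\}}^{\Gamma_N}}(y\cdot n)=\smash{I_+^{\Gamma_C}}(y\cdot n)=0$ already implies $\langle y\cdot n,w\rangle_{\partial\Omega}\ge\langle g,w\rangle_{\Gamma_N}$ for \emph{every} $w\in H^1_D(\Omega)$ with $w\ge0$ a.e.\ on $\Gamma_C$, not only for the $w$ vanishing on $\Gamma_C$ or on $\Gamma_N$ used to derive the two conditions. This requires decomposing such a $w$ as $w=w_1+w_2$ with $w_1=0$ a.e.\ on $\Gamma_C$ and $w_2=0$ a.e.\ on $\Gamma_N$, $w_2\ge0$ a.e.\ on $\Gamma_C$ --- that is, a partition of unity on $\partial\Omega$ separating $\overline{\Gamma}_C$ and $\overline{\Gamma}_N$ compatible with $\partial\Omega=\overline{\Gamma}_D\cup\overline{\Gamma}_N\cup\overline{\Gamma}_C$ and with the space $H^2_N(\textup{div};\Omega)$ from Section~\ref{sec:preliminaries}. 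The remaining pieces --- the duality theorem, the strict-concavity argument, and the Fenchel--Young extraction --- are routine.
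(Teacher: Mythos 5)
Your proposal is correct and follows essentially the same route as the paper: the same splitting of $I$ into a quadratic part composed with $\nabla$ plus a linear-plus-indicator part (with the roles of $F$ and $G$ merely swapped in name), the same appeal to the Fenchel duality theorem of \cite{ET99} for existence and strong duality, and the same extraction of \eqref{eq:optimality.1},\eqref{eq:optimality.2} — your vanishing Fenchel--Young gaps are exactly the subdifferential inclusions the paper invokes. The conical-supremum identification you flag as the delicate step is indeed the one place the paper's proof is terse (it simply asserts the identity with the indicator functions), so your observation about the boundary decomposition is apt rather than a gap.
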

	
	\begin{remark}
		\begin{itemize}[noitemsep,topsep=2pt,leftmargin=!,labelwidth=\widthof{(ii)}]
			\item[(i)] If $g\in L^1(\Gamma_N)$, then  \eqref{eq:admissibility.2} is equivalent to $z\cdot n=g$ a.e.\ on $\Gamma_N$;
			\item[(ii)] If $z\cdot n|_{\Gamma_C}\in L^1(\Gamma_C)$, then  \eqref{eq:admissibility.3} is equivalent to $z\cdot n\ge 0$ a.e.\ on $\Gamma_C$.
		\end{itemize}
	\end{remark}
	
	\begin{proof}[Proof (of Proposition \ref{prop:duality}).]
		\emph{ad (i).} 
		First, if we introduce the  proper, lower semi-continuous, and~\mbox{convex}~\mbox{functionals} $G\colon (L^2(\Omega))^d\to \mathbb{R}$ and $F\colon H^1(\Omega)\to \mathbb{R}\cup\{+\infty\}$, for every $y\in (L^2(\Omega))^d$ and $v\in H^1(\Omega)$,~defined~by
		\begin{align*}
			G(y)&\coloneqq \tfrac{1}{2}\|y\|_{\Omega}^2\,,\\
			F(v)&\coloneqq -(f,v)_{\Omega}-\langle g,v\rangle_{\Gamma_N}+\smash{I_{\{u_D\}}^{\Gamma_D}}(v)+\smash{I_+^{\Gamma_C}}(v-\chi)\,,
		\end{align*}
		then, for every $v\in H^1(\Omega)$, we have that 
		\begin{align*}
			I(v)= G(\nabla v)+F(v)\,.
		\end{align*} 
		Thus, \hspace{-0.1mm}in \hspace{-0.1mm}accordance \hspace{-0.1mm}with \hspace{-0.1mm}\cite[Rem.\ \hspace{-0.1mm}4.2, \hspace{-0.1mm}p.\ \hspace{-0.1mm}60/61]{ET99}, \hspace{-0.1mm}the \hspace{-0.1mm}(Fenchel) \hspace{-0.1mm}dual \hspace{-0.1mm}problem \hspace{-0.1mm}to \hspace{-0.1mm}the~\hspace{-0.1mm}minimization~\hspace{-0.1mm}of \eqref{eq:primal} is given via the maximization of $D\colon (L^2(\Omega))^d\to   \mathbb{R}\cup\{-\infty\}$, 
		for every $y\in (L^2(\Omega))^d$~defined~by 
		\begin{align}\label{prop:duality.1}
			D(y)\coloneqq -G^*(y)- F^*(-\nabla^*y)\,,
		\end{align}
		where \hspace*{-0.15mm}$\nabla^*\hspace*{-0.2em}\colon  \hspace*{-0.2em}(L^2(\Omega))^d\hspace*{-0.2em}\to\hspace*{-0.2em} (H^1(\Omega))^*$ \hspace*{-0.15mm}is \hspace*{-0.15mm}the \hspace*{-0.15mm}adjoint \hspace*{-0.15mm}operator \hspace*{-0.15mm}to \hspace*{-0.15mm}the \hspace*{-0.15mm}gradient \hspace*{-0.15mm}operator \hspace*{-0.15mm}${\nabla\hspace*{-0.1em} \colon \hspace*{-0.2em}H^1(\Omega)\hspace*{-0.2em}\to\hspace*{-0.2em} (L^2(\Omega))^d}$. Due to \cite[Prop.\ 4.2, p.\ 19]{ET99},  for every $y\in (L^2(\Omega))^d$, we have that 
		\begin{align}\label{prop:duality.2}
			G^*(y)=\tfrac{1}{2}\|y\|_{\Omega}^2\,.
		\end{align}
		Since $v+\chi\in H^1(\Omega)$ with $v+\chi=u_D$ a.e.\ on $\Gamma_D$ for all $v\in H^1_D(\Omega)$, for every $y\in (L^2(\Omega))^d$, using the integration-by-parts formula \eqref{eq:pi_cont}, we find that
		\begin{align}\label{prop:duality.3}
			\begin{aligned}
			F^*(-\nabla^*y)&=\sup_{v\in H^1(\Omega)}{\big\{-(y,\nabla v )_{\Omega}+(f,v)_{\Omega}+\langle g,v\rangle_{\Gamma_N}-\smash{I_{\{u_D\}}^{\Gamma_D}}(v)-\smash{I_+^{\Gamma_C}}(v-\chi)\big\}}
			\\&=\sup_{v\in H^1_D(\Omega)}{\big\{-(y,\nabla v )_{\Omega}+(f,v)_{\Omega}+\langle g,v\rangle_{\Gamma_N}-\smash{I_+^{\Gamma_C}}(v)\big\}}
			\\&\quad	-(y,\nabla \chi )_{\Omega}+(f,\chi)_{\Omega}+\langle g,\chi\rangle_{\Gamma_N}
			\\&=\begin{cases}
				\left.\begin{aligned}\smash{I_{\{-f\}}^{\Omega}}(\textup{div}\,y)+\smash{I_{\{g\}}^{\Gamma_N}}(y\cdot n)+\smash{I_+^{\Gamma_C}}(y\cdot n)
					\\[1mm]-(y,\nabla \chi )_{\Omega}+(f,\chi)_{\Omega}+\langle g,\chi\rangle_{\Gamma_N}\end{aligned}\right\}&\text{ if }y\in H(\textup{div};\Omega)\,,\\
				+\infty &\text{ else}\,.
			\end{cases}
			\\&=\begin{cases}
				\left.\begin{aligned}	\smash{I_{\{-f\}}^{\Omega}}(\textup{div}\,y)+\smash{I_{\{g\}}^{\Gamma_N}}(y\cdot n)+\smash{I_+^{\Gamma_C}}(y\cdot n)\\[1mm]
				-\langle y\cdot n,\chi\rangle_{\partial\Omega}+\langle g,\chi\rangle_{\Gamma_N}
			\end{aligned}\right\}&\text{ if }y\in H(\textup{div};\Omega)\,,\\
				+\infty &\text{ else}\,.
			\end{cases} 
			\end{aligned}
		\end{align}
		Using \eqref{prop:duality.2} and \eqref{prop:duality.3} in \eqref{prop:duality.1}, for every $y\in H(\textup{div};\Omega)$, we arrive at the representation \eqref{eq:dual}.
		Eventually, since $D= -\infty$ in $(L^2(\Omega))^d\setminus H(\textup{div};\Omega)$, it is enough to restrict \eqref{prop:duality.1} to $H(\textup{div};\Omega)$.\pagebreak
		
		\emph{ad (ii).} Since both $G\colon (L^2(\Omega))^d\to \mathbb{R}$ and $F\colon H^1(\Omega)\to \mathbb{R}\cup\{+\infty\}$ are proper, convex, and lower semi-continuous and since 
		$G\colon  (L^2(\Omega))^d\to \mathbb{R}$  is continuous at
		$\chi\in \textup{dom}(F)\cap \textup{dom}(G\circ \nabla)$,~\textit{i.e.},
		\begin{align*}
			G(y)\to G(\nabla \chi)\quad \big(y\to \nabla \chi\quad\text{ in }(L^2(\Omega))^d\big)\,,
		\end{align*}
		by \hspace*{-0.1mm}the \hspace*{-0.1mm}celebrated \hspace*{-0.1mm}Fenchel \hspace*{-0.1mm}duality \hspace*{-0.1mm}theorem \hspace*{-0.1mm}(\emph{cf}.\ \hspace*{-0.1mm}\cite[\hspace*{-0.1mm}Rem.\ \hspace*{-0.1mm}4.2, \hspace*{-0.1mm}(4.21), \hspace*{-0.1mm}p.\ \hspace*{-0.1mm}61]{ET99}),  \hspace*{-0.1mm}there~\hspace*{-0.1mm}exists~\hspace*{-0.1mm}a~\hspace*{-0.1mm}\mbox{maximizer} $z\in  (L^2(\Omega))^d$ of  \eqref{prop:duality.1} and a strong duality relation applies, \textit{i.e.},
		\begin{align}\label{strong}
			I(u)=D(z)\,.
		\end{align}
		Since $D= -\infty$ in $(L^2(\Omega))^d\setminus H(\textup{div};\Omega)$, from \eqref{strong}, we infer that $z\in  H(\textup{div};\Omega)$. Moreover, since \eqref{eq:dual} is strictly concave,  the maximizer $z\in H(\textup{div};\Omega)$ is uniquely determined.
		
		\emph{ad (iii).} \hspace*{-0.1mm}By \hspace*{-0.1mm}the \hspace*{-0.1mm}standard \hspace*{-0.1mm}(Fenchel) \hspace*{-0.1mm}convex \hspace*{-0.1mm}duality \hspace*{-0.1mm}theory \hspace*{-0.1mm}(\emph{cf}.\ \hspace*{-0.1mm}\cite[Rem.\ \hspace*{-0.1mm}4.2,~\hspace*{-0.1mm}(4.24),~\hspace*{-0.1mm}(4.25),\hspace*{-0.1mm}~p.~\hspace*{-0.1mm}61]{ET99}), there hold the convex optimality relations
		\begin{align}
			-\nabla^*z&\in \partial F(u)\,,\label{prop:duality.4}\\
			z&\in \partial G(\nabla u)\,.\label{prop:duality.5}
		\end{align}
		While the inclusion \eqref{prop:duality.5} is equivalent to the convex optimality relation \eqref{eq:optimality.1}, the inclusion~\eqref{prop:duality.4}, by the standard equality condition in the Fenchel--Young inequality (\textit{cf}.\ \cite[Prop.\ 5.1, p.\ 21]{ET99}) and the admissibility condition \eqref{eq:admissibility.1}, is equivalent~to 
		\begin{align*}
			-(z,  \nabla u)_{\Omega}&=	(-\nabla^*z,  u)_{\Omega}\\
			&=F^*(-\nabla^*z)+F(u)
			\\&=-\langle z\cdot n,\chi\rangle_{\partial\Omega}+\langle g,\chi\rangle_{\Gamma_N}-(f,u)_{\Omega}-\langle g,u\rangle_{\Gamma_N}
				\\&=-\langle z\cdot n,\chi\rangle_{\partial\Omega}-\langle g,u-\chi\rangle_{\Gamma_N}+(\textup{div}\,z,u)_{\Omega}
				\\&=\langle z\cdot n,u-\chi\rangle_{\partial\Omega}-\langle g,u-\chi\rangle_{\Gamma_N}+(\textup{div}\,z,u)_{\Omega}-\langle z\cdot n,u\rangle_{\partial\Omega}
				\,,
		\end{align*}
		which, by the integration-by-parts formula \eqref{eq:pi_cont}, is equivalent to the claimed  convex optimality relation \eqref{eq:optimality.2}.
	\end{proof}
	
	\hspace*{-2.5mm}$\bullet$ \emph{Dual variational inequality.} A dual solution $z\in K^*$  equivalently is the unique solution of the following  variational inequality: for every $y\in K^*$, it holds that
	\begin{align}\label{dual_variational_ineq}
		(z,z-y)_{\Omega}\le \langle z\cdot n-y\cdot n, \chi\rangle_{\partial\Omega}\,.
	\end{align}

	\hspace*{-2.5mm}$\bullet$ \emph{Augmented problem.} There exists a Lagrange multiplier $\Lambda^*\in (H^1_D(\Omega))^*$ such that for every $v\in H^1_D(\Omega)$, there holds the \emph{augmented problem}
	\begin{align}
			(\nabla u,\nabla v)_{\Omega}+\langle \Lambda^*,v\rangle_{\smash{H^1_D(\Omega)}}=(f,v)_{\Omega}+\langle g,v\rangle_{\Gamma_N}\,.\label{eq:augmented_problem}
	\end{align}
	With the convex optimality relations \eqref{eq:optimality.1},\eqref{eq:optimality.2} and the integration-by-parts formula \eqref{eq:pi_cont}, for every $v\in H^1_D(\Omega)$, we find that
	\begin{align*}
		\langle \Lambda^*,v\rangle_{\smash{H^1_D(\Omega)}}=\langle z\cdot n, v\rangle_{\partial\Omega}-\langle g, v\rangle_{\Gamma_N}\,.
	\end{align*}
	In particular, the convex optimality relation \eqref{eq:optimality.2} then also reads as the \emph{complementarity~condition}\enlargethispage{7.5mm}
	\begin{align*}
			\langle \Lambda^*,u-\chi\rangle_{\smash{H^1_D(\Omega)}}=0\,.
	\end{align*}
	
	\begin{remark}[regularity \hspace*{-0.1mm}in \hspace*{-0.1mm}2D]\label{rem:regularity} In \hspace*{-0.1mm}the \hspace*{-0.1mm}two-dimensional \hspace*{-0.1mm}case, \hspace*{-0.1mm}the \hspace*{-0.1mm}following \hspace*{-0.1mm}regularity \hspace*{-0.1mm}results~\hspace*{-0.1mm}apply:
		\begin{itemize}[noitemsep,topsep=2pt,leftmargin=!,labelwidth=\widthof{(iii)}]
			\item[(i)] If $\Omega\subseteq \mathbb{R}^2$ is a bounded domain with smooth boundary, $\Gamma_C=\partial\Omega$, and  $\chi\in H^{\frac{3}{2}}(\partial\Omega)$, then $u\in H^2(\Omega)$ (\textit{cf}.\ \cite[Lem.\ 2.2]{Spann93}).
			\item[(ii)]  If $\Omega\subseteq \mathbb{R}^2$ is a polygonal, convex, and bounded domain, $\Gamma_C=\partial\Omega$, and  $\chi\in H^{\frac{3}{2}}(\partial\Omega)$, then $u\in H^2(\Omega)$ (\textit{cf}.\ \cite[Thm.\ 4.1]{GrisvardLooss76}).
			\item[(iii)] If $\Omega\hspace*{-0.175em}\subseteq \hspace*{-0.175em}\mathbb{R}^2$ is a polygonal bounded  domain, $\Gamma_C\hspace*{-0.175em}\neq\hspace*{-0.175em}\partial\Omega$, and  $\chi\hspace*{-0.175em}\in\hspace*{-0.175em} H^{\frac{3}{2}}(\partial\Omega)$, then~${u\hspace*{-0.175em}\in\hspace*{-0.175em} H^2(U)\hspace*{-0.15em}\cap\hspace*{-0.15em} C^{1,\lambda}(U)}$ for $\lambda\in (1,\frac{1}{2})$ (\textit{cf}.\ \cite{MoussaouiKhodja92} or \cite[Thm.\ 2.1]{ApelNicaise20}), where $U\subseteq \mathbb{R}^2$ is a neighborhood of  the critical 	points, \textit{i.e.}, the points where the boundary condition changes and that are corners~of~the~domain. In addition, in \cite[Thm.\ 3.1]{ApelNicaise20}, a description of possible singular behavior close to the critical 
			points can be found.
		\end{itemize}
	\end{remark}
	\newpage

		\section{\emph{A posteriori} error analysis}\label{sec:aposteriori} 
	\qquad In this section, resorting to convex duality arguments, we derive an \emph{a posteriori} error identity for arbitrary conforming approximations of  the primal problem \eqref{eq:primal} and the~dual~problem~\eqref{eq:dual} at~the~same~time. 
	To this end, we introduce the 
	\emph{primal-dual gap estimator} ${\eta^2_{\textup{gap}}\colon K\times K^*\to \mathbb{R}}$, for every $v\in K$ and $y\in K^*$ defined by 
	\begin{align}\label{eq:primal-dual.1}
		\begin{aligned}
			\eta^2(v,y)&\coloneqq I(v)-D(y)\,. 
		\end{aligned}
	\end{align}
	
	The primal-dual gap estimator \eqref{eq:primal-dual.1} can be decomposed into two contributions that precisely measure the violation of the convex optimality relations \eqref{eq:optimality.1},\eqref{eq:optimality.2}, respectively.
	
	\begin{lemma}\label{lem:primal_dual_gap_estimator}
		For every $v\in K$ and $y\in K^*$, we have that
		\begin{align*} 
			\eta_{\textup{gap}}^2(v,y)&\coloneqq \eta_A^2(v,y)+\eta_B^2(v,y)\,,\\
			\eta_{\textup{gap},A}^2(v,y)&\coloneqq \tfrac{1}{2}\|\nabla v-y\|_{\Omega}^2\,,\\
			\eta_{\textup{gap},B}^2(v,y)&\coloneqq \langle y\cdot n, v-\chi\rangle_{\partial\Omega}-\langle g, v-\chi\rangle_{\Gamma_N}\,. 
		\end{align*}
	\end{lemma}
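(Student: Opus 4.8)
The plan is to compute $\eta_{\textup{gap}}^2(v,y)=I(v)-D(y)$ directly from the definitions \eqref{eq:primal} and \eqref{eq:dual}, exploiting the admissibility built into $v\in K$ and $y\in K^*$, and then to reorganize the terms via the integration-by-parts formula \eqref{eq:pi_cont} and completion of the square. Since $v\in K$, the indicator contributions $\smash{I_{\{u_D\}}^{\Gamma_D}}(v)$ and $\smash{I_+^{\Gamma_C}}(v-\chi)$ in \eqref{eq:primal} vanish, so that $I(v)=\tfrac12\|\nabla v\|_\Omega^2-(f,v)_\Omega-\langle g,v\rangle_{\Gamma_N}$. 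Analogously, since $y\in K^*$, the indicator contributions in \eqref{eq:dual} vanish and, crucially, $\textup{div}\,y=-f$ a.e.\ in $\Omega$; hence $D(y)=-\tfrac12\|y\|_\Omega^2+\langle y\cdot n,\chi\rangle_{\partial\Omega}-\langle g,\chi\rangle_{\Gamma_N}$.

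Subtracting these two identities yields
\begin{align*}
	\eta_{\textup{gap}}^2(v,y)=\tfrac12\|\nabla v\|_\Omega^2+\tfrac12\|y\|_\Omega^2-(f,v)_\Omega-\langle g,v\rangle_{\Gamma_N}-\langle y\cdot n,\chi\rangle_{\partial\Omega}+\langle g,\chi\rangle_{\Gamma_N}\,.
\end{align*}
Next I would use $\textup{div}\,y=-f$ together with the integration-by-parts formula \eqref{eq:pi_cont} to rewrite the term involving $f$ as $-(f,v)_\Omega=(\textup{div}\,y,v)_\Omega=\langle y\cdot n,v\rangle_{\partial\Omega}-(\nabla v,y)_\Omega$. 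Substituting this in, the three volume terms $\tfrac12\|\nabla v\|_\Omega^2+\tfrac12\|y\|_\Omega^2-(\nabla v,y)_\Omega$ combine to $\tfrac12\|\nabla v-y\|_\Omega^2=\eta_{\textup{gap},A}^2(v,y)$, while the remaining boundary contributions regroup as $\langle y\cdot n,v-\chi\rangle_{\partial\Omega}-\langle g,v-\chi\rangle_{\Gamma_N}=\eta_{\textup{gap},B}^2(v,y)$, which is the asserted decomposition.

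There is no genuine obstacle here: the only points requiring care are (a) verifying that $v\in K$ and $y\in K^*$ indeed annihilate the respective indicator functionals and, in particular, extracting $\textup{div}\,y=-f$ from $\smash{I_{\{-f\}}^{\Omega}}(\textup{div}\,y)=0$; and (b) checking that all duality pairings appearing (namely $\langle y\cdot n,v\rangle_{\partial\Omega}$, $\langle y\cdot n,\chi\rangle_{\partial\Omega}$, $\langle g,v\rangle_{\Gamma_N}$, $\langle g,\chi\rangle_{\Gamma_N}$) are well defined in the sense of \eqref{eq:abbreviation}, which follows from $v,\chi\in H^1(\Omega)$, $y\in H(\textup{div};\Omega)$, and $g\in H^{-\frac12}(\Gamma_N)$, so that \eqref{eq:pi_cont} may be applied legitimately.
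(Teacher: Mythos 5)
Your proposal is correct and follows essentially the same route as the paper's own proof: both reduce $I(v)-D(y)$ using the admissibility of $v\in K$ and $y\in K^*$, replace $-(f,v)_{\Omega}$ by $(\textup{div}\,y,v)_{\Omega}$ via \eqref{eq:admissibility.1} and the integration-by-parts formula \eqref{eq:pi_cont}, and complete the square to obtain $\tfrac{1}{2}\|\nabla v-y\|_{\Omega}^2$ plus the boundary terms. Your additional remarks on the well-definedness of the duality pairings are a welcome (if routine) supplement to the paper's terser computation.
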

	
	\begin{remark}[interpretation of the components of the primal-dual gap estimator]\hphantom{                   }
		\begin{itemize}[noitemsep,topsep=2pt,leftmargin=!,labelwidth=\widthof{(iii)},font=\itshape]
			\item[(i)] The estimator $\eta_{\textup{gap},A}^2$ measures the violation of the convex optimality relation \eqref{eq:optimality.1};
			\item[(ii)] The estimator $\eta_{\textup{gap},B}^2$ measures the violation of the convex optimality relation \eqref{eq:optimality.2}.
		\end{itemize}
	\end{remark}
	
	\begin{proof}[Proof (of Lemma \ref{lem:primal_dual_gap_estimator}).]\let\qed\relax
		Using the admissibility conditions \eqref{eq:admissibility.1}--\eqref{eq:admissibility.3}, the~\mbox{integration-by-parts} formula \eqref{eq:pi_cont}, and the binomial formula, 
		for every $v\in K$ and $y\in K^*$, we find that
		\begin{align*}
			I(v)-D(y)&= \tfrac{1}{2}\| \nabla v\|_{\Omega}^2-(f,v)_{\Omega}-\langle g,v\rangle_{\Gamma_N}+\tfrac{1}{2}\| y\|_{\Omega}^2-\langle y\cdot n, \chi\rangle_{\partial\Omega}+\langle g, \chi\rangle_{\Gamma_N}\\&
			= \tfrac{1}{2}\| \nabla v\|_{\Omega}^2+(\textup{div}\,y,v)_{\Omega}+\tfrac{1}{2}\| y\|_{\Omega}^2-\langle y\cdot n, \chi\rangle_{\partial\Omega}-\langle g,v- \chi\rangle_{\Gamma_N}
			\\&
			= \tfrac{1}{2}\| \nabla v\|_{\Omega}^2-(y,\nabla v)_{\Omega}+\tfrac{1}{2}\| y\|_{\Omega}^2+\langle y\cdot n, v-\chi\rangle_{\partial\Omega}-\langle g, v-\chi\rangle_{\Gamma_N}
			\\&
			= \tfrac{1}{2}\| \nabla v-y\|_{\Omega}^2+\langle y\cdot n, v-\chi\rangle_{\partial\Omega}-\langle g, v-\chi\rangle_{\Gamma_N} \,.\tag*{$\qedsymbol$}
		\end{align*}
	\end{proof}
	Next, we  identify the \emph{optimal strong convexity measures} for the primal energy functional~\eqref{eq:primal} at the primal solution $u\in K$, \textit{i.e.}, 
	$\rho_I^2\colon K\to [0,+\infty)$, for every $v\in K$ defined by
	\begin{align}\label{def:optimal_primal_error}
		\rho_I^2(v)\coloneqq I(v)-I(u)\,,
	\end{align}
	 and for the negative of the dual energy functional \eqref{eq:dual}, \textit{i.e.}, $\rho_{-D}^2\colon\hspace*{-0.1em} K^*\hspace*{-0.1em}\to\hspace*{-0.1em} [0,+\infty)$,  
	  for every~${y\hspace*{-0.1em}\in\hspace*{-0.1em} K^*}$ defined by
	 \begin{align}\label{def:optimal_dual_error}
	 	\rho_{-D}^2(y)\coloneqq- D(y)+D(z)\,,
	 \end{align}
	 which will serve as \emph{`natural'} error quantities in the primal-dual gap identity (\textit{cf}.\ Theorem \ref{thm:prager_synge_identity}).\enlargethispage{10mm}
	
	\begin{lemma}[optimal strong convexity measures]\label{lem:strong_convexity_measures}
		The following statements apply:
		\begin{itemize}[noitemsep,topsep=2pt,leftmargin=!,labelwidth=\widthof{(ii)}]
			\item[(i)] For every $v\in K$, we have that
			\begin{align*}
				\rho_I^2(v)=\tfrac{1}{2}\|\nabla v-\nabla u\|_{\Omega}^2+\langle z\cdot n,v-\chi\rangle_{\partial\Omega}-\langle g,v-\chi\rangle_{\Gamma_N}\,.
			\end{align*}
			\item[(ii)] For every $y\in K^*$, we have that
			\begin{align*}
				\rho_{-D}^2(z)=\tfrac{1}{2}\|y-z\|_{\Omega}^2+\langle y\cdot n,u-\chi \rangle_{\partial\Omega}-\langle g,u-\chi\rangle_{\Gamma_N}\,.
			\end{align*}
		\end{itemize}
	\end{lemma}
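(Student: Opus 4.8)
The idea is to expand each of the two quantities directly from its definition and to use the strong duality relation $I(u) = D(z)$ from Proposition \ref{prop:duality}(ii) together with the convex optimality relations \eqref{eq:optimality.1}, \eqref{eq:optimality.2}. For part (i), I would write $\rho_I^2(v) = I(v) - I(u) = I(v) - D(z)$, and then substitute $z = \nabla u$ from \eqref{eq:optimality.1} into the explicit formula for $D$ in \eqref{eq:dual}. Concretely, $D(z) = -\tfrac12\|\nabla u\|_\Omega^2 + \langle z\cdot n,\chi\rangle_{\partial\Omega} - \langle g,\chi\rangle_{\Gamma_N}$ (the indicator terms vanish since $z \in K^*$), and $I(v) = \tfrac12\|\nabla v\|_\Omega^2 - (f,v)_\Omega - \langle g,v\rangle_{\Gamma_N}$ (the indicator terms vanish since $v \in K$). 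Subtracting and inserting $(f,v)_\Omega = -(\textup{div}\,z,v)_\Omega$ via the admissibility condition \eqref{eq:admissibility.1}, then applying the integration-by-parts formula \eqref{eq:pi_cont} to convert $(\textup{div}\,z,v)_\Omega$ into $\langle z\cdot n, v\rangle_{\partial\Omega} - (\nabla v, z)_\Omega = \langle z\cdot n, v\rangle_{\partial\Omega} - (\nabla v,\nabla u)_\Omega$, the gradient terms reassemble via the binomial formula into $\tfrac12\|\nabla v - \nabla u\|_\Omega^2$, and the boundary terms collect into $\langle z\cdot n, v - \chi\rangle_{\partial\Omega} - \langle g, v-\chi\rangle_{\Gamma_N}$. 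This is essentially the same chain of manipulations already carried out in the proof of Lemma \ref{lem:primal_dual_gap_estimator}, now with the special admissible pair $y = z$ and the identification $z = \nabla u$.

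For part (ii), the computation is the mirror image. I would write $\rho_{-D}^2(z) \overset{?}{=} -D(y) + D(z)$; wait — the claimed right-hand side is stated as an identity for $\rho_{-D}^2(z)$ but involves an arbitrary $y \in K^*$, so really the content is that for \emph{every} $y \in K^*$ the expression $\tfrac12\|y-z\|_\Omega^2 + \langle y\cdot n, u-\chi\rangle_{\partial\Omega} - \langle g,u-\chi\rangle_{\Gamma_N}$ equals $-D(y) + D(z) = -D(y) + I(u)$ (using strong duality once more). So I would compute $-D(y) + I(u)$: expand $-D(y) = \tfrac12\|y\|_\Omega^2 - \langle y\cdot n,\chi\rangle_{\partial\Omega} + \langle g,\chi\rangle_{\Gamma_N}$ (indicators vanish, $y \in K^*$), expand $I(u) = \tfrac12\|\nabla u\|_\Omega^2 - (f,u)_\Omega - \langle g,u\rangle_{\Gamma_N}$ (indicators vanish, $u \in K$), and use $\nabla u = z$ to replace $\tfrac12\|\nabla u\|_\Omega^2$ by $\tfrac12\|z\|_\Omega^2$. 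Then $-D(y) + I(u) = \tfrac12\|y\|_\Omega^2 + \tfrac12\|z\|_\Omega^2 - \langle y\cdot n,\chi\rangle_{\partial\Omega} - (f,u)_\Omega - \langle g,u-\chi\rangle_{\Gamma_N}$. Now insert $(f,u)_\Omega = -(\textup{div}\,z,u)_\Omega$ from \eqref{eq:admissibility.1} and apply \eqref{eq:pi_cont} to get $(\textup{div}\,z,u)_\Omega = \langle z\cdot n,u\rangle_{\partial\Omega} - (\nabla u,z)_\Omega = \langle z\cdot n,u\rangle_{\partial\Omega} - \|z\|_\Omega^2$; substituting makes the $\|z\|_\Omega^2$ terms combine with $\tfrac12\|y\|_\Omega^2$ and a still-missing cross term. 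The cross term $-(y,z)_\Omega$ needed to complete $\tfrac12\|y-z\|_\Omega^2$ must be produced from the boundary contributions: applying \eqref{eq:pi_cont} once more, this time as $(\nabla u, y)_\Omega + (u,\textup{div}\,y)_\Omega = \langle y\cdot n, u\rangle_{\partial\Omega}$ and using $\textup{div}\,y = -f = \textup{div}\,z$ (both $y,z \in K^*$), gives $(z,y)_\Omega = \langle y\cdot n, u\rangle_{\partial\Omega} + (u, \textup{div}\,z)_\Omega$, which feeds back exactly the term needed. After collecting the boundary terms $-\langle y\cdot n,\chi\rangle_{\partial\Omega} + \langle y\cdot n, u\rangle_{\partial\Omega} = \langle y\cdot n, u-\chi\rangle_{\partial\Omega}$ and using the optimality relation \eqref{eq:optimality.2} to cancel the residual $\langle z\cdot n, u-\chi\rangle_{\partial\Omega} - \langle g,u-\chi\rangle_{\Gamma_N} = 0$, the claimed identity falls out.

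The only genuinely delicate point — and the step I expect to require the most care — is the bookkeeping of the duality pairings on $\partial\Omega$ versus the restricted pairings on $\Gamma_N$ and $\Gamma_C$. One must be careful that the integration-by-parts formula \eqref{eq:pi_cont} is applied with the full-boundary pairing $\langle \cdot,\cdot\rangle_{\partial\Omega}$, and that the admissibility condition \eqref{eq:admissibility.2}, $I_{\{g\}}^{\Gamma_N}(z\cdot n) = 0$, is exactly what legitimizes writing $\langle z\cdot n, w\rangle_{\partial\Omega} = \langle g, w\rangle_{\Gamma_N}$ for test functions $w \in H^1_D(\Omega)$ vanishing on $\Gamma_C$ — but here the test functions are $u - \chi$ and $v - \chi$, which vanish on $\Gamma_D$ but \emph{not} necessarily on $\Gamma_C$, so this reduction may only be used where it is legitimate and otherwise the full-boundary pairing must be kept intact (which is why the statements of parts (i) and (ii) retain $\langle z\cdot n, v-\chi\rangle_{\partial\Omega}$ and $\langle y\cdot n, u-\chi\rangle_{\partial\Omega}$ rather than splitting them). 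Apart from this, everything is a routine rearrangement using the binomial formula, exactly parallel to the proof of Lemma \ref{lem:primal_dual_gap_estimator}; in fact one may observe that part (i) is simply that lemma's identity evaluated at $y = z$ together with $\rho_I^2(v) = \eta_{\textup{gap}}^2(v,z)$ — a shortcut worth noting to keep the proof compact.
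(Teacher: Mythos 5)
Your proposal is correct, and it takes a slightly but genuinely different route from the paper. The paper's proof never invokes the strong duality relation \eqref{eq:strong_duality} in this lemma: it expands $I(v)-I(u)$ and $-D(y)+D(z)$ directly, applies the binomial formula to the quadratic terms, and then converts the cross terms $(\nabla u,\nabla v-\nabla u)_{\Omega}$ and $(z,y-z)_{\Omega}$ into boundary pairings via \eqref{eq:optimality.1}, \eqref{eq:admissibility.1}, \eqref{eq:pi_cont}, and finally \eqref{eq:optimality.2}. You instead use strong duality to rewrite $\rho_I^2(v)=I(v)-D(z)=\eta_{\textup{gap}}^2(v,z)$ and $\rho_{-D}^2(y)=I(u)-D(y)=\eta_{\textup{gap}}^2(u,y)$, so that both parts become instances of Lemma \ref{lem:primal_dual_gap_estimator} evaluated at the admissible pairs $(v,z)$ and $(u,y)$, followed by the substitution $z=\nabla u$; you state this shortcut explicitly for part (i) and effectively re-derive it by hand for part (ii), where the cleaner statement $\rho_{-D}^2(y)=\eta_{\textup{gap}}^2(u,y)$ would have spared you the somewhat laborious bookkeeping of the cross term $-(y,z)_{\Omega}$. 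Your route is shorter and exposes the structural symmetry between the two error measures (which is precisely what makes Theorem \ref{thm:prager_synge_identity} an identity); the paper's route is self-contained and keeps strong duality in reserve for Theorem \ref{thm:prager_synge_identity}, which is arguably why the authors chose it. Your caution about the full-boundary pairing $\langle\cdot,\cdot\rangle_{\partial\Omega}$ versus the restricted pairings is well placed and correctly resolved, and you rightly read the misprint $\rho_{-D}^2(z)$ in part (ii) as $\rho_{-D}^2(y)$.
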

	
	\begin{remark}\hphantom{                   }
		\begin{itemize}[noitemsep,topsep=2pt,leftmargin=!,labelwidth=\widthof{(iii)},font=\itshape]
			\item[(i)] By the convex optimality relation \eqref{eq:optimality.2},  the integration-by-parts formula \eqref{eq:pi_cont}, the convex optimality relation \eqref{eq:optimality.1}, the admissibility condition \eqref{eq:admissibility.1},  and the primal variational~inequality \eqref{eq:variational_ineq}, for~every~${v\in K}$,~we~have~that
			\begin{align*}
				\langle z\cdot n,v-\chi\rangle_{\partial\Omega}-\langle g,v-\chi\rangle_{\Gamma_N}&=\langle z\cdot n,v-u\rangle_{\partial\Omega}-\langle g,v-u\rangle_{\Gamma_N}
				\\&=(z,\nabla v -\nabla u)_{\Omega}+(\textup{div}\, z,v-u)_{\Omega}-\langle g,v-u\rangle_{\Gamma_N}
				\\&=(\nabla u,\nabla v -\nabla u)_{\Omega}-(f,v-u)_{\Omega}-\langle g,v-u\rangle_{\Gamma_N}
				\\&\ge 0\,.
			\end{align*}
			
			\item[(ii)] By the convex optimality relation \eqref{eq:optimality.2}, the integration-by-parts formula \eqref{eq:pi_cont}, the convex optimality relation \eqref{eq:optimality.1},  the admissibility condition \eqref{eq:admissibility.1}, and the dual variational inequality \eqref{dual_variational_ineq},  for every $y\in K^*$, we have that
			\begin{align*}
				\langle y\cdot n,u-\chi\rangle_{\partial\Omega}-\langle g,u-\chi\rangle_{\Gamma_N}&=\langle y\cdot n-z\cdot n,u\rangle_{\partial\Omega}-\langle y\cdot n-z\cdot n,\chi \rangle_{\partial\Omega}\\
				&=(y-z,\nabla u)_{\Omega}+(\textup{div}\,y-\textup{div}\,z,u)_{\Omega}-\langle y\cdot n-z\cdot n,\chi \rangle_{\partial\Omega}
				\\&=(y-z,z)_{\Omega}-\langle y\cdot n-z\cdot n,\chi \rangle_{\partial\Omega} 
				\\&\ge 0\,.
			\end{align*}
		\end{itemize}
	\end{remark}
	
	\begin{proof}[Proof \hspace*{-0.15mm}(of \hspace*{-0.15mm}Lemma \hspace*{-0.1mm}\ref{lem:strong_convexity_measures})]\let\qed\relax
		
		\emph{ad (i).} Using the binomial formula, the convex optimality relation \eqref{eq:optimality.1}, the admissibility condition \eqref{eq:admissibility.1},  the integration-by-parts~formula \eqref{eq:pi_cont}, and the convex optimality relation \eqref{eq:optimality.2},  for every $v\in K$, we find that
		\begin{align*}
			I(v)-I(u)&=\tfrac{1}{2}\|\nabla v\|_{\Omega}^2-\tfrac{1}{2}\|\nabla u\|_{\Omega}^2-(f,v-u)_{\Omega}-\langle g,v-u\rangle_{\Gamma_N}\\&
			=\tfrac{1}{2}\|\nabla v-\nabla u\|_{\Omega}^2+(\nabla u,\nabla v-\nabla u)_{\Omega}-(f,v-u)_{\Omega}-\langle g,v-u\rangle_{\Gamma_N}
			\\&
			=\tfrac{1}{2}\|\nabla v-\nabla u\|_{\Omega}^2+(z,\nabla v-\nabla u)_{\Omega}+(\textup{div}\,z,v-u)_{\Omega}-\langle g,v-u\rangle_{\Gamma_N}
			\\&
			=\tfrac{1}{2}\|\nabla v-\nabla u\|_{\Omega}^2+\langle z\cdot n,v-u\rangle_{\partial\Omega}-\langle g,v-u\rangle_{\Gamma_N}
			\\&
			=\tfrac{1}{2}\|\nabla v-\nabla u\|_{\Omega}^2+\langle z\cdot n,v-\chi\rangle_{\partial\Omega}-\langle g,v-\chi\rangle_{\Gamma_N}\,.
		\end{align*}
		
		\emph{ad (ii).} Using the binomial formula, the admissibility conditions \eqref{eq:admissibility.1}--\eqref{eq:admissibility.3},~the~convex~opti-mality \hspace*{-0.1mm}relation \hspace*{-0.1mm}\eqref{eq:optimality.1},  \hspace*{-0.1mm}the \hspace*{-0.1mm}integration-by-parts \hspace*{-0.1mm}formula \hspace*{-0.1mm}\eqref{eq:pi_cont}, \hspace*{-0.1mm}again, \hspace*{-0.1mm}the~\hspace*{-0.1mm}\mbox{admissibility}~\hspace*{-0.1mm}\mbox{condition}~\hspace*{-0.1mm}\eqref{eq:admissibility.1}, and  the convex optimality relation \eqref{eq:optimality.2}, for every $y\in K^*$, we find that
		\begin{align*}
			-D(y)+D(z)&=\tfrac{1}{2}\|y\|_{\Omega}^2-\tfrac{1}{2}\|z\|_{\Omega}^2+\langle z\cdot n-y\cdot n,\chi \rangle_{\partial\Omega}\\&
			=\tfrac{1}{2}\|y-z\|_{\Omega}^2+(z,y-z)_{\Omega}+\langle z\cdot n-y\cdot n,\chi \rangle_{\partial\Omega}
			\\&
			=\tfrac{1}{2}\|y-z\|_{\Omega}^2+(\nabla u, y-z)_{\Omega}+\langle z\cdot n-y\cdot n,\chi \rangle_{\partial\Omega}
			\\&
			=\tfrac{1}{2}\|y-z\|_{\Omega}^2+(\textup{div}\,z-\textup{div}\,y,u)_{\Omega}+\langle z\cdot n-y\cdot n,\chi-u \rangle_{\partial\Omega}
			\\&
			=\tfrac{1}{2}\|y-z\|_{\Omega}^2+\langle y\cdot n,u-\chi \rangle_{\partial\Omega}-\langle g,u-\chi\rangle_{\Gamma_N}\,.\tag*{$\qedsymbol$}
		\end{align*}
	\end{proof}
	
	Eventually, we have everything at our disposal to establish an \emph{a posteriori} error identity that identifies the \emph{primal-dual total error} $\rho_{\textup{tot}}^2\colon K\times K^*\to [0,+\infty)$, for every $v\in K$ and $y\in K^*$ defined by 
	\begin{align}\label{def:primal_dual_total_error}
		\rho_{\textup{tot}}^2(v,y)\coloneqq \rho_I^2(v)+\rho_{-D}^2(y)\,,
	\end{align}
	with the primal-dual gap estimator $\eta_{\textup{gap}}^2\colon K\times K^*\to [0,+\infty)$ (\textit{cf}.\ \eqref{eq:primal-dual.1}).\enlargethispage{10mm}

	\begin{theorem}[primal-dual gap identity]\label{thm:prager_synge_identity}
		For every $v\in K$ and $y\in K^*$, we have that
		\begin{align*}
			\rho_{\textup{tot}}^2(v,y)
			=\eta_{\textup{gap}}^2(v,y)\,.
		\end{align*}
	\end{theorem}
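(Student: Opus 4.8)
The plan is to observe that this identity is essentially a restatement of strong duality together with the definitions, so the proof is a short algebraic manipulation. First I would unfold the definitions: by \eqref{def:primal_dual_total_error}, \eqref{def:optimal_primal_error}, and \eqref{def:optimal_dual_error},
\begin{align*}
	\rho_{\textup{tot}}^2(v,y)=\rho_I^2(v)+\rho_{-D}^2(y)=\big(I(v)-I(u)\big)+\big(D(z)-D(y)\big)=I(v)-D(y)+\big(D(z)-I(u)\big)\,.
\end{align*}
Then I would invoke the strong duality relation \eqref{eq:strong_duality} from Proposition \ref{prop:duality}(ii), i.e.\ $I(u)=D(z)$, so that the parenthesized term vanishes, leaving $\rho_{\textup{tot}}^2(v,y)=I(v)-D(y)$. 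Finally, recalling the definition \eqref{eq:primal-dual.1} of the primal-dual gap estimator, $\eta_{\textup{gap}}^2(v,y)=I(v)-D(y)$, which gives the claim.

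Alternatively — and this is worth noting as a sanity check rather than a necessary step — one can verify the identity \emph{without} citing strong duality, by substituting the explicit expressions obtained earlier: the formula for $\rho_I^2(v)$ from Lemma \ref{lem:strong_convexity_measures}(i), the formula for $\rho_{-D}^2(y)$ from Lemma \ref{lem:strong_convexity_measures}(ii), and the formula for $\eta_{\textup{gap}}^2(v,y)$ from Lemma \ref{lem:primal_dual_gap_estimator}. Adding the first two and comparing with the third reduces exactly to the algebraic identity \eqref{intro:aposteriori} from the introduction; checking it amounts to expanding the binomial $\tfrac12\|\nabla v-\nabla u\|_\Omega^2+\tfrac12\|y-z\|_\Omega^2$ against $\tfrac12\|\nabla v-y\|_\Omega^2$ and using the convex optimality relations \eqref{eq:optimality.1}, \eqref{eq:optimality.2} and the admissibility condition \eqref{eq:admissibility.1} (together with \eqref{eq:pi_cont}) to cancel the remaining cross terms and boundary pairings.

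There is no real obstacle here: the content has already been distilled into Proposition \ref{prop:duality} and the two lemmas, and the theorem is their immediate corollary. The only point requiring a modicum of care is bookkeeping the signs and the $\chi$-shifts in the boundary terms $\langle z\cdot n,v-\chi\rangle_{\partial\Omega}$, $\langle y\cdot n,u-\chi\rangle_{\partial\Omega}$ and the Neumann contributions, but since Lemma \ref{lem:strong_convexity_measures} and Lemma \ref{lem:primal_dual_gap_estimator} already package these correctly, I would simply quote them and conclude via \eqref{eq:strong_duality}.
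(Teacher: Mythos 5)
Your proposal is correct and follows exactly the paper's own argument: unfold the definitions of $\rho_{\textup{tot}}^2$, $\rho_I^2$, $\rho_{-D}^2$, and $\eta_{\textup{gap}}^2$, then cancel $I(u)-D(z)$ via the strong duality relation \eqref{eq:strong_duality}. The alternative verification via Lemmas \ref{lem:primal_dual_gap_estimator} and \ref{lem:strong_convexity_measures} is a fine sanity check but not needed.
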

	
	\begin{proof}\let\qed\relax
		Combining the definitions \eqref{eq:primal-dual.1}, \eqref{def:optimal_primal_error}, \eqref{def:optimal_dual_error}, \eqref{def:primal_dual_total_error}, and the strong duality relation \eqref{eq:strong_duality}, 
		for every $v\in K$ and $y\in K^*$, we find that
		\begin{align*}
			\rho_{\textup{tot}}^2(v,y)&=\rho_I^2(v)+\rho_{-D}^2(y)
			\\&=I(v)-I(u)+D(z)-D(y)
			\\&=I(v)-D(y)
			\\&=\eta_{\textup{gap}}^2(v,y)\,.\tag*{$\qedsymbol$}
		\end{align*}
	\end{proof} 
	
	Note that the primal-dual gap identity (\textit{cf}.\ Theorem \ref{thm:prager_synge_identity}) applies to arbitrary conforming approximations of the primal problem \eqref{eq:primal} and the dual problem \eqref{eq:dual}.
	To~be~numerically~practicable it is necessary to have a computationally inexpensive way to approximate the primal and the dual problem at the same time. 
	In  Section \ref{sec:discrete_signorini}, exploiting orthogonality relations between the Crouzeix--Raviart  and the Raviart--Thomas element,
	we transfer all convex duality relations from Section \ref{sec:continuous_signorini} to a discrete level to arrive at a discrete reconstruction formula that allows us to approximate the primal and the dual problem at the same time using only the~\mbox{Crouzeix--Raviart}~element.\newpage
%
%

	\section{Discrete scalar Signorini problem}\label{sec:discrete_signorini}\enlargethispage{10.5mm}
	
	\qquad In this section, we discuss the discrete scalar Signorini problem.\medskip
	
	\hspace*{-2.5mm}$\bullet$ \emph{Discrete \hspace*{-0.15mm}primal \hspace*{-0.15mm}problem.} \hspace*{-0.15mm}Let  \hspace*{-0.15mm}$f_h\hspace*{-0.175em}\in\hspace*{-0.175em} \mathcal{L}^0(\mathcal{T}_h)$, \hspace*{-0.15mm}$g_h\hspace*{-0.175em}\in\hspace*{-0.175em} \mathcal{L}^0(\mathcal{S}_h^{\Gamma_N})$, \hspace*{-0.15mm}$u_D^h\hspace*{-0.175em}\in\hspace*{-0.175em} \mathcal{L}^0(\mathcal{S}_h^{\Gamma_D})$,~\hspace*{-0.15mm}and~\hspace*{-0.15mm}${\chi_h\hspace*{-0.175em}\in\hspace*{-0.175em} \mathcal{L}^0(\mathcal{S}_h^{\Gamma_D}\hspace*{-0.175em}\cup\hspace*{-0.175em}\mathcal{S}_h^{\Gamma_C})}$ with $\chi_h=u_D^h$ a.e.\ in $\Gamma_D$. Then, the \emph{discrete scalar Signorini problem} is given via the minimization of $I_h^{cr}\colon \mathcal{S}^{1,cr}(\mathcal{T}_h)\to \mathbb{R}\cup\{+\infty\}$, for every $v_h\in \mathcal{S}^{1,cr}(\mathcal{T}_h)$ defined by
	\begin{align}
		\begin{aligned}
		I_h^{cr}(v_h)&\coloneqq \tfrac{1}{2}\| \nabla_hv_h\|_{\Omega}^2-(f_h,\Pi_hv_h)_{\Omega}-( g_h,\pi_h v_h)_{\Gamma_N}+I_{\smash{K_h^{cr}}}(v_h)
		\\&\,= \tfrac{1}{2}\| \nabla_hv_h\|_{\Omega}^2-(f_h,\Pi_hv_h)_{\Omega}-(g_h,\pi_h v_h)_{\Gamma_N}+I_{\{u_D^h\}}^{\Gamma_D}(\pi_h v_h)+\smash{I_+^{\Gamma_C}}(\pi_h v_h-\chi_h)\,,
		\end{aligned}
		\label{eq:discrete_primal}
	\end{align}
	where
	\begin{align*}
	 	K_h^{cr}\coloneqq \big\{v_h\in \mathcal{S}^{1,cr}(\mathcal{T}_h)\mid \pi_hv_h=u_D^h\text{ a.e.\ on }\Gamma_D\,,\;\pi_h v_h\ge \chi_h\text{ a.e.\ on }\Gamma_C\big\}\,,
	\end{align*}
	and $\smash{I_{K_h^{cr}}}\coloneqq \smash{I_{\{u_D^h\}}^{\Gamma_D}(\pi_h (\cdot))+I_h^{\Gamma_C}(\pi_h(\cdot)-\chi_h)}\colon \mathcal{S}^{1,cr}(\mathcal{T}_h)\to \mathbb{R}\cup\{+\infty\}$.
	
	\hspace*{-0.5mm}In \hspace*{-0.175mm}what \hspace*{-0.175mm}follows, \hspace*{-0.175mm}we \hspace*{-0.175mm}refer \hspace*{-0.175mm}to \hspace*{-0.175mm}the \hspace*{-0.175mm}minimization \hspace*{-0.175mm}of \hspace*{-0.175mm}the \hspace*{-0.175mm}functional \hspace*{-0.175mm}\eqref{eq:discrete_primal} \hspace*{-0.175mm}as \hspace*{-0.175mm}the \hspace*{-0.175mm}\emph{discrete~\hspace*{-0.175mm}\mbox{primal}~\hspace*{-0.175mm}\mbox{problem}}. 
	Since the functional \eqref{eq:discrete_primal} is proper, strictly convex, weakly coercive, and lower semi-continuous, the direct method in the calculus of variations yields the existence~of~a~unique~minimizer~$u_h^{cr}\in K_h^{cr}$, called \hspace*{-0.175mm}the \hspace*{-0.175mm}\emph{discrete \hspace*{-0.175mm}primal \hspace*{-0.175mm}solution}. \hspace*{-0.5mm}We \hspace*{-0.175mm}reserve \hspace*{-0.175mm}the \hspace*{-0.175mm}notation \hspace*{-0.175mm}$u_h^{cr}\hspace*{-0.175em}\in\hspace*{-0.175em} K_h^{cr}$ \hspace*{-0.1mm}for \hspace*{-0.175mm}the~\hspace*{-0.175mm}\mbox{discrete}~\hspace*{-0.175mm}\mbox{primal}~\hspace*{-0.175mm}\mbox{solution}. 
	
    \hspace*{-2.5mm}$\bullet$	\emph{Discrete primal variational inequality.} The discrete primal solution $u_h^{cr}\in K_h^{cr}$ equivalently is the unique solution of the following 
      variational inequality:
     for every $v_h\in K_h^{cr}$, it holds that
	\begin{align}
		(\nabla_h u_h^{cr},\nabla_h u_h^{cr}-\nabla_h v_h)_{\Omega}\leq (f_h,\Pi_h u_h^{cr}-\Pi_h v_h)_{\Omega}+( g_h,\pi_h u_h^{cr}-\pi_h v_h)_{\Gamma_N}\,.\label{eq:discrete_variational_ineq}
	\end{align}
	
	\hspace*{-2.5mm}$\bullet$ \emph{Discrete dual problem.} The \emph{(Fenchel) dual problem} to the discrete scalar Signorini problem is given via the maximization of $D_h^{rt}\colon \mathcal{R}T^0(\mathcal{T}_h)\to \mathbb{R}\cup\{-\infty\}$, for every $y_h\in \mathcal{R}T^0(\mathcal{T}_h)$~defined~by 
	\begin{align}
		\begin{aligned}
		D_h^{rt}(y_h)&\coloneqq -\tfrac{1}{2}\| \Pi_hy_h\|_{\Omega}^2+(y_h\cdot n,\chi_h)_{\Gamma_D\cup\Gamma_C}-I_{\smash{K_h^{rt,*}}}(y_h) 
		\\&\,= -\tfrac{1}{2}\| \Pi_hy_h\|_{\Omega}^2+(y_h\cdot n,\chi_h )_{\Gamma_D\cup\Gamma_C}-
			I_{\{-f_h\}}^{\Omega} (\textup{div}\,y_h)-I_{\{g_h\}}^{\Gamma_N}(y_h\cdot n)-\smash{I_+^{\Gamma_C}}(y_h\cdot n)\,,
		\end{aligned}\hspace*{-2mm}\label{eq:discrete_dual}
	\end{align}
	where 
	\begin{align*} 
		K_h^{rt,*}\coloneqq \big\{y_h\in \mathcal{R}T^0(\mathcal{T}_h)\mid \textup{div}\, y_h =-f_h\text{ a.e.\ in }\Omega\,,\; y_h\cdot n =g_h\text{ a.e.\ }\Gamma_N\,,\; y_h\cdot n\ge 0\text{ a.e.\ on }\Gamma_C\big\}\,,
	\end{align*}
	and $\smash{I_{K_h^{rt,*}}}\coloneqq \smash{I_{\{-f_h\}}^{\Omega}(\textup{div}\,(\cdot))+(I_{\{g_h\}}^{\Gamma_N}+\smash{I_+^{\Gamma_C}})((\cdot)\cdot n)}\colon \mathcal{R}T^0(\mathcal{T}_h)\to \mathbb{R}\cup\{+\infty\}$.
	
	The identification of the (Fenchel) dual problem (in the sense of \cite[Rem.\ 4.2, p.\ 60/61]{ET99})~to~the minimization of \eqref{eq:discrete_primal} with the maximization of  \eqref{eq:discrete_dual} can be found in the proof of the following result that also establishes the validity of a discrete strong duality relation and discrete convex optimality relations.
	
		\begin{proposition}[strong duality and convex duality relations]\label{prop:discrete_duality} The following statements apply:
		\begin{itemize}[noitemsep,topsep=2pt,leftmargin=!,labelwidth=\widthof{(ii)}]
			\item[(i)]  The (Fenchel) dual problem to the discrete scalar Signorini problem is defined via the maximization of \eqref{eq:discrete_dual}.  
			\item[(ii)]  There exists a unique maximizer $z_h^{rt}\in  \mathcal{R}T^0(\mathcal{T}_h)$ of \eqref{eq:discrete_dual} satisfying the \textup{discrete~admissibility conditions}
			\begin{alignat}{2}
				\textup{div}\,z_h^{rt}&=-f_h&&\quad\text{ a.e.\ in }\Omega\label{eq:discrete_admissibility.1}\,,\\
				z_h^{rt}\cdot n &= g_h&&\quad\text{ a.e.\ on }\Gamma_N\label{eq:discrete_admissibility.2}\,,\\
				z_h^{rt}\cdot n &\ge 0 &&\quad\text{ a.e.\ on }\Gamma_C\label{eq:discrete_admissibility.3}\,,
			\end{alignat}
			In addition, there
			holds a \emph{discrete strong duality relation}, \textit{i.e.}, we have that
			\begin{align}
				I_h^{cr}(u_h^{cr}) = D_h^{rt}(z_h^{rt})\,.\label{eq:discrete_strong_duality}
			\end{align}
			\item[(iii)] There hold the \emph{discrete convex optimality relations}, \textit{i.e.}, we have that
			\begin{alignat}{2}
				\Pi_h z_h^{rt}&=\nabla_h u_h^{cr}&&\quad\text{ a.e.\ in }\Omega\label{eq:discrete_optimality.1}\,,\\
				z_h^{rt}\cdot n\,(\pi_hu_h^{cr}-\chi_h)&=0&&\quad\text{ a.e.\ on }\Gamma_C\label{eq:discrete_optimality.2}\,.
			\end{alignat}
		\end{itemize}
	\end{proposition}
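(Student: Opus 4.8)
The plan is to mirror the proof of Proposition~\ref{prop:duality} step by step, replacing the gradient $\nabla$ by the element-wise gradient $\nabla_h$, the spaces $H^1(\Omega)$ and $H(\textup{div};\Omega)$ by $\mathcal{S}^{1,cr}(\mathcal{T}_h)$ and $\mathcal{R}T^0(\mathcal{T}_h)$, and the integration-by-parts formula \eqref{eq:pi_cont} by the discrete integration-by-parts formula \eqref{eq:pi}. For \emph{ad (i)}, I would introduce the proper, convex, lower semi-continuous functionals $G_h\colon(\mathcal{L}^0(\mathcal{T}_h))^d\to\mathbb{R}$ and $F_h\colon\mathcal{S}^{1,cr}(\mathcal{T}_h)\to\mathbb{R}\cup\{+\infty\}$, defined, for every $\bar y_h\in(\mathcal{L}^0(\mathcal{T}_h))^d$ and $v_h\in\mathcal{S}^{1,cr}(\mathcal{T}_h)$, by $G_h(\bar y_h)\coloneqq\tfrac12\|\bar y_h\|_\Omega^2$ and $F_h(v_h)\coloneqq-(f_h,\Pi_hv_h)_\Omega-(g_h,\pi_hv_h)_{\Gamma_N}+I_{\{u_D^h\}}^{\Gamma_D}(\pi_hv_h)+I_+^{\Gamma_C}(\pi_hv_h-\chi_h)$, so that $I_h^{cr}(v_h)=G_h(\nabla_hv_h)+F_h(v_h)$ for all $v_h\in\mathcal{S}^{1,cr}(\mathcal{T}_h)$, and then invoke the (finite-dimensional) Fenchel dual construction of \cite[Rem.\ 4.2, p.\ 60/61]{ET99}: the dual problem is the maximization over $\bar y_h\in(\mathcal{L}^0(\mathcal{T}_h))^d$ of $-G_h^*(\bar y_h)-F_h^*(-\nabla_h^*\bar y_h)$. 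One has $G_h^*=G_h$ immediately. The crucial step is the computation of $F_h^*(-\nabla_h^*\bar y_h)$: restricting the defining supremum to $v_h$ with $\pi_hv_h=u_D^h$ on $\Gamma_D$, splitting off a fixed $\chi_h^{cr}\in\mathcal{S}^{1,cr}(\mathcal{T}_h)$ with $\pi_h\chi_h^{cr}=\chi_h$ on $\Gamma_D\cup\Gamma_C$ (which exists because the side-averages form a complete set of degrees of freedom of $\mathcal{S}^{1,cr}(\mathcal{T}_h)$), and using the discrete integration-by-parts formula \eqref{eq:pi}, one finds that $F_h^*(-\nabla_h^*\bar y_h)$ is finite \emph{if and only if} $\bar y_h=\Pi_hy_h$ for some $y_h\in\mathcal{R}T^0(\mathcal{T}_h)$ with $\textup{div}\,y_h=-f_h$ in $\Omega$, $y_h\cdot n=g_h$ on $\Gamma_N$, and $y_h\cdot n\ge0$ on $\Gamma_C$, and that in this case $-G_h^*(\bar y_h)-F_h^*(-\nabla_h^*\bar y_h)=D_h^{rt}(y_h)$; since such a $y_h$ is uniquely determined by $\bar y_h$ once $\textup{div}\,y_h=-f_h$ is imposed, this identifies the dual problem with the maximization of \eqref{eq:discrete_dual} and proves (i).

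For \emph{ad (ii)}, since $G_h$ is real-valued and, as a continuous quadratic on a finite-dimensional space, continuous everywhere, and $F_h$ is proper ($K_h^{cr}\ne\emptyset$), convex, and lower semi-continuous, the Fenchel duality theorem \cite[Rem.\ 4.2, (4.21), p.\ 61]{ET99} yields the existence of a maximizer of the dual problem together with the discrete strong duality relation \eqref{eq:discrete_strong_duality}. By the identification established in (i), the dual functional equals $-\infty$ outside $K_h^{rt,*}$, so the maximizer $z_h^{rt}$ necessarily satisfies the discrete admissibility conditions \eqref{eq:discrete_admissibility.1}--\eqref{eq:discrete_admissibility.3}. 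For uniqueness, I would observe that on $K_h^{rt,*}$ the divergence is pinned to the element-wise constant function $-f_h$, so that an element of $\mathcal{R}T^0(\mathcal{T}_h)$ lying in $K_h^{rt,*}$ is determined element-wise by $\Pi_hy_h$ (together with $\textup{div}\,y_h$); hence $y_h\mapsto\|\Pi_hy_h\|_\Omega^2$ is strictly convex on $K_h^{rt,*}$, and the maximizer of \eqref{eq:discrete_dual} is unique.

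For \emph{ad (iii)}, I would use the extremality relations of Fenchel duality \cite[Rem.\ 4.2, (4.24), (4.25), p.\ 61]{ET99}, i.e.\ $\Pi_hz_h^{rt}\in\partial G_h(\nabla_hu_h^{cr})$ and $-\nabla_h^*z_h^{rt}\in\partial F_h(u_h^{cr})$. Since $\partial G_h$ is the identity, the first relation gives $\Pi_hz_h^{rt}=\nabla_hu_h^{cr}$, which is exactly \eqref{eq:discrete_optimality.1}. For the second relation, the equality case in the Fenchel--Young inequality \cite[Prop.\ 5.1, p.\ 21]{ET99} yields $F_h(u_h^{cr})+F_h^*(-\nabla_h^*z_h^{rt})=-(\Pi_hz_h^{rt},\nabla_hu_h^{cr})_\Omega$; inserting the expressions for $F_h(u_h^{cr})$ and for $F_h^*(-\nabla_h^*z_h^{rt})$ obtained in (i), and using the discrete integration-by-parts formula \eqref{eq:pi} together with the discrete admissibility conditions \eqref{eq:discrete_admissibility.1}, \eqref{eq:discrete_admissibility.2}, this reduces to the complementarity identity $(z_h^{rt}\cdot n,\pi_hu_h^{cr}-\chi_h)_{\Gamma_C}=0$. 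Since $z_h^{rt}\cdot n\ge0$ a.e.\ on $\Gamma_C$ by \eqref{eq:discrete_admissibility.3}, $\pi_hu_h^{cr}-\chi_h\ge0$ a.e.\ on $\Gamma_C$ because $u_h^{cr}\in K_h^{cr}$, and both factors are side-wise constant on $\mathcal{S}_h^{\Gamma_C}$, the non-negative integrand must vanish a.e.\ on $\Gamma_C$, which is precisely \eqref{eq:discrete_optimality.2}.

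The main obstacle is the \emph{only-if} part of the characterization in (i): showing that $F_h^*(-\nabla_h^*\bar y_h)<+\infty$ \emph{forces} $\bar y_h$ to be the $\Pi_h$-image of a Raviart--Thomas field with the prescribed divergence and $\Gamma_N$-normal trace and the correct sign on $\Gamma_C$. The converse implication, as well as the value of the conjugate on its domain of finiteness, is a direct consequence of \eqref{eq:pi} and of the sign constraint $y_h\cdot n\ge0$ on $\Gamma_C$; the \emph{only-if} part, however, amounts to a discrete Farkas-type argument, namely that a linear functional of the form $v_h\mapsto-(\bar y_h,\nabla_hv_h)_\Omega+(f_h,\Pi_hv_h)_\Omega+(g_h,\pi_hv_h)_{\Gamma_N}$ which is non-positive on the convex cone $\{v_h\in\mathcal{S}^{1,cr}(\mathcal{T}_h)\mid\pi_hv_h=0\text{ on }\Gamma_D,\;\pi_hv_h\ge0\text{ on }\Gamma_C\}$ must be representable, through \eqref{eq:pi}, by a Raviart--Thomas field lying in $K_h^{rt,*}$. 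This, together with the bookkeeping of the three indicator functions on $\Gamma_D$, $\Gamma_N$, and $\Gamma_C$, is where the genuine work lies; all remaining steps are the discrete counterparts of the corresponding steps in the proof of Proposition~\ref{prop:duality} and involve no new difficulty.
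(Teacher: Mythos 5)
Your proposal follows the paper's proof essentially verbatim: the same splitting $I_h^{cr}=G_h(\nabla_h\cdot)+F_h$, the same computation of $F_h^*(-\nabla_h^*\overline{y}_h)$ after splitting off a fixed $\widehat{\chi}_h\in\mathcal{S}^{1,cr}(\mathcal{T}_h)$ with $\pi_h\widehat{\chi}_h=\chi_h$ on $\Gamma_D\cup\Gamma_C$, the same appeal to the Fenchel duality theorem and extremality relations, and the same uniqueness argument via the divergence constraint. The one step you flag as ``where the genuine work lies''---that finiteness of the conjugate forces $\overline{y}_h=\Pi_hy_h$ for some $y_h\in\mathcal{R}T^0(\mathcal{T}_h)$ with $\textup{div}\,y_h=-f_h$ and $y_h\cdot n=g_h$ on $\Gamma_N$---is exactly what the paper's lifting lemma (Lemma \ref{lem:reconstruction} in the appendix) provides: boundedness of the supremum over the subspace $\{\pi_hv_h=0$ on $\Gamma_D\cup\Gamma_C\}$ forces the compatibility condition \eqref{lem:reconstruction.0.0}, and the lemma then constructs the Raviart--Thomas lift explicitly, while the sign condition on $\Gamma_C$ follows from finiteness of the supremum over the cone; you should invoke that lemma rather than develop a separate Farkas-type argument.
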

	
		\begin{proof}
		\emph{ad (i).} If we introduce the functionals $G_h\colon (\mathcal{L}^0(\mathcal{T}_h))^d\to \mathbb{R}$ and $F_h\colon \mathcal{S}^{1,cr}(\mathcal{T}_h)\to \mathbb{R}\cup\{+\infty\}$, for every $\overline{y}_h\in  (\mathcal{L}^0(\mathcal{T}_h))^d$ and $v_h\in\mathcal{S}^{1,cr}(\mathcal{T}_h)$ defined by\vspace*{-0.25mm}
		\begin{align*}
			G_h(\overline{y}_h)&\coloneqq \tfrac{1}{2}\|\overline{y}_h\|_{\Omega}^2\,,\\
			F_h(v_h)&\coloneqq -(f_h,\Pi_h v_h)_{\Omega}-( g_h,\pi_h v_h)_{\Gamma_N}+I_{\{u_D^h\}}^{\Gamma_D}(\pi_h v_h)+\smash{I_+^{\Gamma_C}}(\pi_h v_h-\chi_h)\,,
		\end{align*}
		then, for every $v_h\in \smash{\mathcal{S}^{1,cr}(\mathcal{T}_h)}$, we have that\vspace*{-0.25mm}
		\begin{align*}
			\smash{I_h^{cr}(v_h)= G_h(\nabla_h v_h)+F_h(v_h)\,.}
		\end{align*}
		Thus, \hspace*{-0.175mm}in \hspace*{-0.175mm}accordance \hspace*{-0.175mm}with \hspace*{-0.175mm}\cite[\hspace*{-0.2mm}Rem.\ \hspace*{-0.2mm}4.2, \hspace*{-0.2mm}p.\ \hspace*{-0.2mm}60]{ET99}, \hspace*{-0.175mm}the \hspace*{-0.175mm}(Fenchel) \hspace*{-0.175mm}dual \hspace*{-0.175mm}problem \hspace*{-0.175mm}to \hspace*{-0.175mm}the \hspace*{-0.175mm}\mbox{minimization}~\hspace*{-0.175mm}of~\hspace*{-0.175mm}\eqref{eq:discrete_primal} is given via the maximization of $D_h^0\colon (\mathcal{L}	^0(\mathcal{T}_h))^d\to \mathbb{R}\cup\{-\infty\}$, 
		for every $\overline{y}_h\in (\mathcal{L}^0(\mathcal{T}_h))^d$~defined~by\vspace*{-0.25mm}
		\begin{align}\label{helper_functional}
			\smash{D_h^0(\overline{y}_h)\coloneqq -G^*_h(\overline{y}_h)- F^*_h(-\nabla^*_h\overline{y}_h)\,,}
		\end{align}
		where $\nabla^*_h\colon  (\mathcal{L}^0(\mathcal{T}_h))^d\to (\mathcal{S}^{1,cr}(\mathcal{T}_h))^*$ denotes the adjoint operator to $\nabla_h \colon \mathcal{S}^{1,cr}(\mathcal{T}_h)\to (\mathcal{L}^0(\mathcal{T}_h))^d$. 
		For every $\overline{y}_h\in (\mathcal{L}^0(\mathcal{T}_h))^d$, we have that\vspace*{-0.25mm}
		\begin{align}\label{prop:discrete_duality.1}
			\smash{G^*_h(\overline{y}_h)=\tfrac{1}{2}\|\overline{y}_h\|_{\Omega}^2\,.}
		\end{align}
		For fixed $\widehat{\chi}_h\in \mathcal{S}^{1,cr}(\mathcal{T}_h)$ with $\pi_h\widehat{\chi}_h=\chi_h$ a.e.\ on  $\Gamma_D\cup\Gamma_C$,  due to $\chi_h=u_D^h$ a.e.~on~$\Gamma_D$,
		for every $\overline{y}_h\in (\mathcal{L}^0(\mathcal{T}_h))^d$, using the lifting lemma (\textit{cf}.\ Lemma \ref{lem:reconstruction})  and the discrete integration-by-parts formula \eqref{eq:pi}, we find that\vspace*{-0.5mm}
		\begin{align}\label{prop:discrete_duality.1.2}
			&F^*_h(-\nabla^*_h\overline{y}_h)\\&=\sup_{v_h\in  \mathcal{S}^{1,cr}(\mathcal{T}_h)}{\hspace*{-0.1em}\big\{\hspace*{-0.1em}-\hspace*{-0.1em}(\overline{y}_h,\nabla_h v_h )_{\Omega}\hspace*{-0.1em}+\hspace*{-0.1em}(f_h,\Pi_h v_h)_{\Omega}\hspace*{-0.1em}+\hspace*{-0.1em}(g_h,\pi_hv_h)_{\Gamma_N}\hspace*{-0.1em}+\hspace*{-0.1em}I_{\{u_D^h\}}^{\Gamma_D}(\pi_h v_h)\hspace*{-0.1em}+\hspace*{-0.1em}\smash{I_+^{\Gamma_C}}(\pi_hv_h-\chi_h)\big\}}\notag 
			\\&=\sup_{v_h\in  \mathcal{S}^{1,cr}_D(\mathcal{T}_h)}{\hspace*{-0.1em}\big\{\hspace*{-0.1em}-\hspace*{-0.1em}(\overline{y}_h,\nabla_h v_h )_{\Omega}\hspace*{-0.1em}+\hspace*{-0.1em}(f_h,\Pi_h v_h)_{\Omega}\hspace*{-0.1em}+\hspace*{-0.1em}( g_h,\pi_hv_h)_{\Gamma_N}\hspace*{-0.1em}+\hspace*{-0.1em}\smash{I_+^{\Gamma_C}}(\pi_hv_h)\big\}}\notag
			\\&\quad-(\overline{y}_h,\nabla_h \widehat{\chi}_h )_{\Omega}+(f_h,\Pi_h\widehat{\chi}_h)_{\Omega}+( g_h,\pi_h\widehat{\chi}_h)_{\Gamma_N}\notag
			\\&= 
			\begin{cases}
			\left.	\begin{aligned}
				I_{\{-f_h\}}^{\Omega}(\textup{div}\,y_h)+I_{\{g_h\}}^{\Gamma_N}(y_h\cdot n)+\smash{I_+^{\Gamma_C}}(y_h\cdot n)\\
				-(\Pi_hy_h,\nabla_h\widehat{\chi}_h )_{\Omega}+(f_h,\Pi_h\widehat{\chi}_h)_{\Omega}+(g_h,\pi_h\chi_h)_{\Gamma_N}\end{aligned}\right\}&\text{ if }\overline{y}_h=\Pi_hy_h\text{ for some }y_h\in \mathcal{R}T^0(\mathcal{T}_h)\,,\\[-0.75mm]
				+\infty&\text{ else}\,,
			\end{cases} \notag
				\\&= 
			\begin{cases}
				\left.	\begin{aligned}
					I_{\{-f_h\}}^{\Omega}(\textup{div}\,y_h)+I_{\{g_h\}}^{\Gamma_N}(y_h\cdot n)+\smash{I_+^{\Gamma_C}}(y_h\cdot n)\\
						-( y_h\cdot n,\chi_h)_{\Gamma_D\cup\Gamma_C}\end{aligned}\right\}
						&\text{ if }\overline{y}_h=\Pi_hy_h\text{ for some }y_h\in \mathcal{R}T^0(\mathcal{T}_h)\,,\\[-0.75mm]
				+\infty&\text{ else}\,.
			\end{cases} \notag
		\end{align}  
		Using \eqref{prop:discrete_duality.1} and \eqref{prop:discrete_duality.1.2} in \eqref{helper_functional}, for every $\overline{y}_h=\Pi_h y_h\in  \Pi_h(\mathcal{R}T^0(\mathcal{T}_h))$, where $y_h\in \mathcal{R}T^0(\mathcal{T}_h)$, 
		we arrive at the~\mbox{representation}~\eqref{eq:discrete_dual}.
		Since $D_h^0= -\infty$ in $(\mathcal{L}^0(\mathcal{T}_h))^d\setminus \Pi_h(\mathcal{R}T^0(\mathcal{T}_h))$, it is enough to restrict~\eqref{eq:discrete_dual}~to~$ \Pi_h(\mathcal{R}T^0(\mathcal{T}_h))$.~Then, we have that $D_h^0(\Pi_h y_h)=D_h^{rt}(y_h)$ for all $y_h\in\mathcal{R}T^0(\mathcal{T}_h)$. 
		
		\emph{ad (ii).}  Since $G_h\colon\hspace*{-0.1em} (\mathcal{L}^0(\mathcal{T}_h))^d\hspace*{-0.1em}\to\hspace*{-0.1em} \mathbb{R}$ and $F_h\colon \hspace*{-0.1em} \mathcal{S}^{1,cr}(\mathcal{T}_h)\hspace*{-0.1em}\to\hspace*{-0.1em} \mathbb{R}\cup\{+\infty\}$ are proper,~convex,~and~lower semi-continuous and since $G_h\colon \hspace*{-0.15em}(\mathcal{L}^0(\mathcal{T}_h))^d\hspace*{-0.15em}\to \hspace*{-0.15em}\mathbb{R}$  is continuous~at~${\widehat{\chi}_h\hspace*{-0.15em}\in\hspace*{-0.15em} \textup{dom}(F_h)\hspace*{-0.15em}\cap \hspace*{-0.15em}\textup{dom}(G_h\hspace*{-0.15em}\circ\hspace*{-0.15em} \nabla_h)}$, \textit{i.e.},\vspace*{-0.25mm}
		\begin{align*}
			\smash{G_h(\overline{y}_h)\to G_h(\nabla_h\widehat{\chi}_h)\quad\big(\overline{y}_h\to \nabla_h\widehat{\chi}_h\quad\text{ in }(\mathcal{L}^0(\mathcal{T}_h))^d\big)\,,}
		\end{align*}
		by \hspace*{-0.1mm}the \hspace*{-0.1mm}celebrated \hspace*{-0.1mm}Fenchel \hspace*{-0.1mm}duality \hspace*{-0.1mm}theorem \hspace*{-0.1mm}(\emph{cf}.\ \hspace*{-0.1mm}\cite[\hspace*{-0.1mm}Rem.\ \hspace*{-0.1mm}4.2, \hspace*{-0.1mm}(4.21), \hspace*{-0.1mm}p.\ \hspace*{-0.1mm}61]{ET99}),  \hspace*{-0.1mm}there~\hspace*{-0.1mm}exists~\hspace*{-0.1mm}a~\hspace*{-0.1mm}\mbox{maximizer}   $z_h^0\in  (\mathcal{L}^0(\mathcal{T}_h))^d$ of  \eqref{helper_functional} and a discrete strong duality relation applies, \textit{i.e.},\vspace*{-0.25mm}
		\begin{align*}
			\smash{I_h^{cr}(u_h^{cr})=D_h^0(z_h^0)\,.}
		\end{align*}
		Since $D_h^0= -\infty$ in $(\mathcal{L}^0(\mathcal{T}_h))^d\setminus \Pi_h(\mathcal{R}T^0(\mathcal{T}_h))$, there exists  $z_h^{rt}\in \mathcal{R}T^0(\mathcal{T}_h)$ such that $z_h^0=\Pi_h z_h^{rt}$~a.e.\ in $\Omega$.
		In particular, we have that $D_h^0(z_h^0)=D_h^{rt}(z_h^{rt})$, so that $z_h^{rt}\in \mathcal{R}T^0(\mathcal{T}_h)$ is a maximizer of \eqref{eq:discrete_dual} and the discrete strong duality relation \eqref{eq:discrete_strong_duality} applies.  By the strict convexity of  ${G_h\colon \hspace*{-0.1em}(\mathcal{L}^0(\mathcal{T}_h))^d\hspace*{-0.1em}\to\hspace*{-0.1em}\mathbb{R}}$ and the divergence constraint, the maximizer $z_h^{rt}\in \mathcal{R}T^0(\mathcal{T}_h)$ is uniquely~determined.
		
		\emph{ad (iii).} \hspace*{-0.15mm}By \hspace*{-0.15mm}the \hspace*{-0.15mm}standard \hspace*{-0.15mm}(Fenchel) \hspace*{-0.15mm}convex \hspace*{-0.15mm}duality \hspace*{-0.15mm}theory \hspace*{-0.15mm}(\emph{cf}.\ \hspace*{-0.15mm}\cite[Rem.\ \hspace*{-0.15mm}4.2, \hspace*{-0.15mm}(4.24),~\hspace*{-0.15mm}(4.25),~\hspace*{-0.15mm}p.~\hspace*{-0.15mm}61]{ET99}), there hold the convex optimality relations\vspace*{-0.25mm}\enlargethispage{11.5mm}
		\begin{align}
			-\nabla_h^*\Pi_h z_h^{rt}&\in \partial F_h(u_h)\,,\label{prop:discrete_duality.2}\\
			\Pi_h z_h^{rt}&\in \partial G_h(\nabla_h u_h^{cr})\,.\label{prop:discrete_duality.3}
		\end{align}
		The inclusion  \eqref{prop:discrete_duality.3} is equivalent to the discrete convex optimality relation \eqref{eq:discrete_optimality.1}. The inclusion \eqref{prop:discrete_duality.2}, by the standard equality condition in the Fenchel--Young inequality (\textit{cf}.\ \cite[Prop.\ 5.1, p.\ 21]{ET99}), $\pi_hu_h^{cr}=\chi_h$ a.e.\ on $\Gamma_D$, and the discrete admissibility conditions \eqref{eq:discrete_admissibility.1},\eqref{eq:discrete_admissibility.2},~is~\mbox{equivalent}~to 
		\begin{align*}
			-(\Pi_h z_h^{rt},  \nabla_h u_h^{cr})_{\Omega}&=	(-\nabla^*_h\Pi_h z_h^{rt},  u_h^{cr})_{\Omega}\\
			&=F^*_h(-\nabla^*_h\Pi_h z_h^{rt})+F_h(u_h^{cr})
			\\&=-(z_h^{rt}\cdot n,\chi_h)_{\Gamma_D\cup\Gamma_C}-(f_h,\Pi_h u_h^{cr})_{\Omega}-(g_h,\pi_hu_h^{cr})_{\Gamma_N} 
			\\&=( z_h^{rt}\cdot n,\pi_hu_h^{cr}-\chi_h)_{\Gamma_C}+(\textup{div}\,z_h^{rt},\Pi_hu_h^{cr})_{\Omega}-( z_h^{rt}\cdot n,\pi_hu_h^{cr})_{\partial\Omega}\,,
		\end{align*}
		which, \hspace*{-0.15mm}by \hspace*{-0.15mm}the \hspace*{-0.15mm}discrete \hspace*{-0.15mm}integration-by-parts \hspace*{-0.15mm}formula \hspace*{-0.15mm}\eqref{eq:pi}, \hspace*{-0.15mm}is \hspace*{-0.15mm}equivalent \hspace*{-0.15mm}to \hspace*{-0.15mm}${( z_h^{rt}\hspace*{-0.1em}\cdot\hspace*{-0.1em} n,\pi_hu_h^{cr}\hspace*{-0.1em}-\hspace*{-0.1em}\chi_h)_{\Gamma_C}\hspace*{-0.175em}=\hspace*{-0.175em}0}$. 
		Since $z_h^{rt}\cdot n\ge 0 $ a.e.\ on $\Gamma_C$ and $\pi_hu_h^{cr}-\chi_h\ge 0$ a.e.\ on $\Gamma_C$, we conclude that
		\eqref{eq:discrete_optimality.1} applies. 
	\end{proof}

	\hspace*{-2.5mm}$\bullet$ \emph{Discrete dual variational inequality.} A discrete dual solution $z_h^{rt}\hspace*{-0.1em}\in\hspace*{-0.1em} \smash{K_h^{rt,*}}$ equivalently is the unique \mbox{solution} of the following variational inequality:
	 for every $y_h\in \smash{K_h^{rt,*}}$, it holds that\vspace*{-0.5mm}
	\begin{align}
			\smash{(\Pi_h z_h^{rt},\Pi_h z_h^{rt}-\Pi_h y_h)_{\Omega}\geq ( z_h^{rt}\cdot n - y_h\cdot n,\chi_h)_{\Gamma_N}\,.}\label{eq:discrete_variational_dual_ineq}
	\end{align}
	
	\hspace*{-2.5mm}$\bullet$ \emph{Discrete augmented problem.} There exists a discrete Lagrange multiplier $\smash{\Lambda_h^{cr,*}}\in (\mathcal{S}^{1,cr}_D(\mathcal{T}_h))^*$ such that for every $v_h\in \mathcal{S}^{1,cr}_D(\mathcal{T}_h)$, there holds the \emph{discrete augmented problem}\vspace*{-0.5mm}
	\begin{align}
			\langle \smash{\Lambda_h^{cr,*}},v_h\rangle_{\smash{\mathcal{S}^{1,cr}_D(\mathcal{T}_h)}}=\smash{(\nabla_h u_h^{cr},\nabla_h v_h)_{\Omega}-(f_h,\Pi_h v_h)_{\Omega}-(g_h,\pi_hv_h)_{\Gamma_N}\,.}\label{eq:discrete_augmented_problem}
	\end{align}
	With the convex optimality relation \eqref{eq:discrete_optimality.1} and the discrete  integration-by-parts formula \eqref{eq:pi}, introducing the discrete Lagrange multiplier $\smash{\overline{\lambda}}_h^{cr}\coloneqq z_h^{rt}\cdot n|_{\Gamma_C}\in \mathcal{L}^0(\mathcal{S}_h^{\Gamma_C})$, 
	for every $v_h\in \mathcal{S}^{1,cr}_D(\mathcal{T}_h)$, we find that\vspace*{-1mm}
	\begin{align*}
		\smash{\langle \smash{\Lambda_h^{cr,*}},v_h\rangle_{\smash{\mathcal{S}^{1,cr}_D(\mathcal{T}_h)}}=( \smash{\overline{\lambda}}_h^{cr}, \pi_hv_h)_{\Gamma_C}\,.}
	\end{align*}

	We define the \emph{discrete~flux}\vspace*{-1mm}
	\begin{align}\label{eq:generalized_marini}
		z_h^{rt}\coloneqq \nabla_hu_h^{cr}-\frac{f_h}{d}(\textup{id}_{\mathbb{R}^d}-\Pi_h\textup{id}_{\mathbb{R}^d})\in(\mathcal{L}^1(\mathcal{T}_h))^d\,,
	\end{align} 
	which 
	is admissible in the discrete dual problem and 
	a discrete dual solution.\enlargethispage{11mm}

	\begin{proposition}\label{prop:discrete_reconstruction}  The discrete flux $z_h^{rt}\in\smash{(\mathcal{L}^1(\mathcal{T}_h))^d}$, defined by \eqref{eq:generalized_marini}, 
		satisfies $z_h^{rt}\in \smash{\mathcal{R}T^0(\mathcal{T}_h)}$, the discrete admissibility relations \eqref{eq:discrete_admissibility.1}--\eqref{eq:discrete_admissibility.3},
			the discrete convex optimality relations~\eqref{eq:discrete_optimality.1},\eqref{eq:discrete_optimality.2}, and is a discrete dual solution.
	\end{proposition}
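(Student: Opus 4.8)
The plan is to verify, one by one, the properties claimed for the explicit flux \eqref{eq:generalized_marini} and then to identify it with the discrete dual solution of Proposition~\ref{prop:discrete_duality}. On each $T\in\mathcal{T}_h$ the flux reads $z_h^{rt}|_T=\nabla_h u_h^{cr}|_T-\tfrac{f_h|_T}{d}(\textup{id}_{\mathbb{R}^d}-\Pi_h\textup{id}_{\mathbb{R}^d})$, an affine vector field of the local Raviart--Thomas form $a_T+b_T\,x$; since every face $S'\subseteq\partial T$ lies in an affine hyperplane orthogonal to $n_T|_{S'}$, the function $x\mapsto x\cdot n_T|_{S'}$ is constant on $S'$, so $z_h^{rt}|_T\cdot n_T$ is constant on each face of $T$. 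Taking the element-wise divergence gives $\textup{div}(z_h^{rt})|_T=-\tfrac{f_h|_T}{d}\,\textup{div}(\textup{id}_{\mathbb{R}^d})=-f_h|_T$, the element-wise form of \eqref{eq:discrete_admissibility.1}; and since $\Pi_h$ is idempotent and $\nabla_h u_h^{cr}$ is element-wise constant, $\Pi_h z_h^{rt}=\Pi_h\nabla_h u_h^{cr}-\tfrac{f_h}{d}(\Pi_h\textup{id}_{\mathbb{R}^d}-\Pi_h\Pi_h\textup{id}_{\mathbb{R}^d})=\nabla_h u_h^{cr}$, which is \eqref{eq:discrete_optimality.1}.

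The core step --- and the one I expect to be the main obstacle --- is to show that the face-wise constant normal components of $z_h^{rt}$ glue across interior sides and assume the prescribed boundary values; here the discrete primal variational inequality \eqref{eq:discrete_variational_ineq} enters. For $S\in\mathcal{S}_h$ let $\varphi_S\in\mathcal{S}^{1,cr}(\mathcal{T}_h)$ be the associated Crouzeix--Raviart basis function, so that $\langle\varphi_S\rangle_{S'}=\varphi_S(x_{S'})=\delta_{S,S'}$ for all $S'\in\mathcal{S}_h$ while $\nabla\varphi_S$ is element-wise constant, whence $\int_T(\textup{id}_{\mathbb{R}^d}-\Pi_h\textup{id}_{\mathbb{R}^d})\cdot\nabla\varphi_S\,\mathrm{d}x=0$ on every $T$. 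Element-wise integration by parts for the affine field $z_h^{rt}$ tested against $\varphi_S$, together with these identities, $\textup{div}\,z_h^{rt}=-f_h$, the relation $(f_h,\varphi_S)_\Omega=(f_h,\Pi_h\varphi_S)_\Omega$, and the face-wise constancy of $z_h^{rt}\cdot n_T$, gives
\begin{align*}
	|S|\,\jump{z_h^{rt}\cdot n}_S=(\nabla_h u_h^{cr},\nabla_h\varphi_S)_{\Omega}-(f_h,\Pi_h\varphi_S)_{\Omega}\qquad\text{for all }S\in\mathcal{S}_h\,.
\end{align*}
The right-hand side is now read off from \eqref{eq:discrete_variational_ineq}, tested with $u_h^{cr}\pm t\varphi_S$ when this stays in $K_h^{cr}$ and with $u_h^{cr}+t\varphi_S$ ($t>0$) otherwise; the delicate point is the admissibility bookkeeping, i.e.\ deciding which perturbations keep $\pi_h(\cdot)=u_D^h$ a.e.\ on $\Gamma_D$ and $\pi_h(\cdot)\ge\chi_h$ a.e.\ on $\Gamma_C$, and the identity $\langle\varphi_S\rangle_{S'}=\delta_{S,S'}$ that collapses the boundary integral to a single face is exactly what makes this close.

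Concretely, for $S\in\mathcal{S}_h^{i}$ one has $u_h^{cr}\pm t\varphi_S\in K_h^{cr}$ for all $t$ and $\pi_h\varphi_S=0$ on $\Gamma_N$, so the right-hand side above vanishes and $\jump{z_h^{rt}\cdot n}_S=0$; since this holds for every interior side and $z_h^{rt}$ is element-wise of local Raviart--Thomas form, $z_h^{rt}\in\mathcal{R}T^0(\mathcal{T}_h)$. For $S\in\mathcal{S}_h^{\Gamma_N}$, $u_h^{cr}\pm t\varphi_S$ leaves $\pi_h(\cdot)$ unchanged on $\Gamma_D\cup\Gamma_C$, hence is admissible, and the right-hand side equals $(g_h,\pi_h\varphi_S)_{\Gamma_N}=g_h|_S\,|S|$, so $z_h^{rt}\cdot n=g_h$ a.e.\ on $\Gamma_N$, i.e.\ \eqref{eq:discrete_admissibility.2}. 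For $S\in\mathcal{S}_h^{\Gamma_C}$ with $\pi_h u_h^{cr}|_S>\chi_h|_S$ the two-sided perturbation is admissible for small $t>0$, so $z_h^{rt}\cdot n|_S=0$; for $S\in\mathcal{S}_h^{\Gamma_C}$ with $\pi_h u_h^{cr}|_S=\chi_h|_S$ only $u_h^{cr}+t\varphi_S$ is admissible, forcing the right-hand side to be nonnegative, so $z_h^{rt}\cdot n|_S\ge0$. The last two cases together yield $z_h^{rt}\cdot n\ge0$ a.e.\ on $\Gamma_C$ and $z_h^{rt}\cdot n\,(\pi_h u_h^{cr}-\chi_h)=0$ a.e.\ on $\Gamma_C$, i.e.\ \eqref{eq:discrete_admissibility.3} and \eqref{eq:discrete_optimality.2}.

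At this point $z_h^{rt}\in\mathcal{R}T^0(\mathcal{T}_h)$ satisfies \eqref{eq:discrete_admissibility.1}--\eqref{eq:discrete_admissibility.3}, so $z_h^{rt}\in K_h^{rt,*}$ is admissible in \eqref{eq:discrete_dual}. To conclude that it is a discrete dual solution I would invoke uniqueness: on a simplex every element of $\mathcal{R}T^0(\mathcal{T}_h)$ equals $\Pi_h(\cdot)|_T+\tfrac{\textup{div}(\cdot)|_T}{d}(\textup{id}_{\mathbb{R}^d}-\Pi_h\textup{id}_{\mathbb{R}^d})$ on each $T$, hence is determined by its element-wise constant part and its divergence; the unique maximizer of \eqref{eq:discrete_dual} from Proposition~\ref{prop:discrete_duality}(ii),(iii) has element-wise constant part $\nabla_h u_h^{cr}$ and divergence $-f_h$, the same data as $z_h^{rt}$, so the two coincide. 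Alternatively one computes $D_h^{rt}(z_h^{rt})$ directly: using $\Pi_h z_h^{rt}=\nabla_h u_h^{cr}$, the relations \eqref{eq:discrete_admissibility.1}--\eqref{eq:discrete_admissibility.3} and \eqref{eq:discrete_optimality.2}, $\chi_h=u_D^h=\pi_h u_h^{cr}$ a.e.\ on $\Gamma_D$, and the discrete integration-by-parts formula \eqref{eq:pi} applied to $(u_h^{cr},z_h^{rt})$, one obtains $D_h^{rt}(z_h^{rt})=\tfrac12\|\nabla_h u_h^{cr}\|_{\Omega}^2-(f_h,\Pi_h u_h^{cr})_{\Omega}-(g_h,\pi_h u_h^{cr})_{\Gamma_N}=I_h^{cr}(u_h^{cr})$, so the discrete strong duality relation \eqref{eq:discrete_strong_duality} holds for $(u_h^{cr},z_h^{rt})$ and, by strict concavity of $D_h^{rt}$ along $\Pi_h(\cdot)$ together with the divergence constraint, $z_h^{rt}$ is the maximizer.
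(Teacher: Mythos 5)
Your proof is correct, and for the structural part it takes a genuinely different route from the paper. The paper establishes $z_h^{rt}\in\mathcal{R}T^0(\mathcal{T}_h)$ together with \eqref{eq:discrete_admissibility.1}, \eqref{eq:discrete_admissibility.2}, and \eqref{eq:discrete_optimality.1} by feeding the discrete augmented problem \eqref{eq:discrete_augmented_problem} into the lifting lemma (Lemma \ref{lem:reconstruction}), whose own proof rests on the surjectivity of $\textup{div}|_{\mathcal{R}T^0(\mathcal{T}_h)}$ and the discrete Helmholtz--Weyl decomposition of \cite{BW21}; you instead verify the local Raviart--Thomas structure, $\textup{div}\,z_h^{rt}=-f_h$, and $\Pi_h z_h^{rt}=\nabla_h u_h^{cr}$ by inspection of \eqref{eq:generalized_marini} and then derive the jump identity $\vert S\vert\,\jump{z_h^{rt}\cdot n}_S=(\nabla_h u_h^{cr},\nabla_h\varphi_S)_{\Omega}-(f_h,\Pi_h\varphi_S)_{\Omega}$ by element-wise integration by parts, reading off the normal-trace conditions from one- and two-sided perturbations $u_h^{cr}\pm t\varphi_S$ in \eqref{eq:discrete_variational_ineq}. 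This is more elementary and self-contained (it makes the orthogonality mechanism completely explicit side by side), at the price of redoing by hand what the lifting lemma packages once and reuses elsewhere in the paper. Your treatment of $\Gamma_C$ (sign of $z_h^{rt}\cdot n$ and the complementarity \eqref{eq:discrete_optimality.2} via the case distinction $\pi_h u_h^{cr}>\chi_h$ versus $=\chi_h$) coincides with the paper's. For maximality, your primary argument --- uniqueness of the maximizer from Proposition \ref{prop:discrete_duality} combined with the fact that a lowest-order Raviart--Thomas field is determined on each simplex by its mean and its divergence --- is a legitimate shortcut not used in the paper; your alternative, the direct computation $D_h^{rt}(z_h^{rt})=I_h^{cr}(u_h^{cr})$ followed by an appeal to \eqref{eq:discrete_strong_duality}, is exactly the paper's argument.
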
 
	
	\begin{proof}\let\qed\relax
		\emph{ad $z_h^{rt}\in \mathcal{R}T^0(\mathcal{T}_h)$ with \eqref{eq:discrete_admissibility.1}--\eqref{eq:discrete_admissibility.3} and \eqref{eq:discrete_optimality.1}.}  Due to \eqref{eq:discrete_augmented_problem}, the lifting lemma (\textit{cf}.\ Lemma \ref{lem:reconstruction}) implies that $z_h^{rt}\in \mathcal{R}T^0(\mathcal{T}_h)$ with \eqref{eq:discrete_admissibility.1}, \eqref{eq:discrete_admissibility.2}, and \eqref{eq:discrete_optimality.1}. In addition,  for every $v_h\in \mathcal{S}^{1,cr}_D(\mathcal{T}_h)$ with $\pi_h v_h\ge 0$ a.e.\ on $\Gamma_C$, the discrete integration-by-parts formula \eqref{eq:pi} and the discrete primal variational inequality (\textit{cf}.\ \eqref{eq:discrete_variational_ineq}) yield that\vspace*{-0.5mm}
		\begin{align}\label{prop:discrete_duality.4}
			\begin{aligned} 
			(z_h^{rt}\cdot n, \pi_h v_h)_{\Gamma_C}&=(\Pi_h z_h^{rt},\nabla_h v_h)_{\Omega}+(\textup{div}\,z_h^{rt},\Pi_h v_h)_{\Omega}-(z_h^{rt}\cdot n,\pi_h v_h)_{\Gamma_N}
			\\&=(\nabla_h u_h^{cr},\nabla_h v_h)_{\Omega}-(f_h,\Pi_h v_h)_{\Omega}-(g_h,\pi_h v_h)_{\Gamma_N}\ge 0\,.
		\end{aligned}
		\end{align}
		Thus, choosing $v_h\hspace*{-0.1em}=\hspace*{-0.1em}\varphi_S$ for all $S\hspace*{-0.1em}\in\hspace*{-0.1em} \smash{\mathcal{S}_h^{\Gamma_C}}$ in \eqref{prop:discrete_duality.4} and exploiting that $\pi_h\varphi_S\hspace*{-0.1em}=\hspace*{-0.1em}\chi_S$,~for~every~$S\hspace*{-0.1em}\in\hspace*{-0.1em}\smash{\mathcal{S}_h^{\Gamma_C}}$, we find that 
		admissibility  condition \eqref{eq:discrete_admissibility.3} is satisfied.

		\emph{ad \eqref{eq:discrete_optimality.2}.} If $S\in \smash{\mathcal{S}_h^{\Gamma_C}}$ is such that $\pi_h u_h^{cr}>\chi_h$ on $S$, then there exists some $\alpha_S<0$~such~that
		$\pi_h u_h^{cr}+\alpha_S\chi_S\ge \chi_h$ on $S$. Thus, from $v_h= u_h^{cr}+\alpha_S\varphi_S\in \mathcal{S}^{1,cr}_D(\mathcal{T}_h)$~in~\eqref{eq:discrete_variational_ineq}, 
		we get $z_h^{rt}\cdot n=0$~on~$S$.

		\emph{ad Maximality.} Using the discrete convex optimality relations \eqref{eq:discrete_optimality.1},\eqref{eq:discrete_optimality.2},~the~discrete~integra-tion-by-parts formula \eqref{eq:pi}, and the discrete admissibility conditions \eqref{eq:discrete_admissibility.1}--\eqref{eq:discrete_admissibility.3},~we~observe~that\vspace*{-0.5mm}
		\begin{align*}
			I_h^{cr}(u_h^{cr})&=\tfrac{1}{2}\|\nabla_hu_h^{cr}\|_\Omega^2-(f_h,\Pi_h u_h^{cr})_{\Omega}-(g_h,\pi_h u_h^{cr})_{\Gamma_N}
			\\&=\tfrac{1}{2}\|\nabla_hu_h^{cr}\|_\Omega^2+(\textup{div}\,z_h^{rt},\Pi_h u_h^{cr})_{\Omega}-(z_h^{rt}\cdot n,\pi_h u_h^{cr})_{\Gamma_N}
			\\&=\tfrac{1}{2}\|\nabla_hu_h^{cr}\|_\Omega^2-(\Pi_hz_h^{rt},\nabla_hu_h^{cr})_\Omega+(z_h^{rt}\cdot n,\pi_h u_h^{cr})_{\Gamma_D\cup\Gamma_C }
			\\&=-\tfrac{1}{2}\|\Pi_h z_h^{rt}\|_\Omega^2
			+(z_h^{rt}\cdot n,\pi_h u_h^{cr})_{\Gamma_D\cup\Gamma_C }\\&
			= D_h^{rt}(z_h^{rt})\,, 
		\end{align*}
		which, \hspace*{-0.15mm}by \hspace*{-0.15mm}the \hspace*{-0.15mm}discrete \hspace*{-0.15mm}strong \hspace*{-0.15mm}duality \hspace*{-0.15mm}relation \hspace*{-0.15mm}\eqref{eq:discrete_strong_duality}, \hspace*{-0.15mm}shows \hspace*{-0.15mm}that \hspace*{-0.15mm}$z_h^{rt}\!\in\! \smash{\mathcal{R}T^0(\mathcal{T}_h)}$ \hspace*{-0.15mm}is \hspace*{-0.15mm}maximal~\hspace*{-0.15mm}for~\hspace*{-0.15mm}\eqref{eq:discrete_dual}.~\qedsymbol
	\end{proof}\newpage

	\section{\emph{A priori} error analysis}\label{sec:apriori}\enlargethispage{7.5mm}
	
	\qquad In this section, resorting to the discrete convex duality relations established in Section \ref{sec:discrete_signorini},~we derive an \emph{a priori} error identity for  the discrete primal problem \eqref{eq:discrete_primal} and the discrete~dual~problem \eqref{eq:discrete_dual} at the same time. From this \emph{a priori} error identity, in turn, we derive an \emph{a priori} error estimate with an explicit error decay rate that is quasi-optimal. 
	To this end, we~proceed~analogously to the continuous setting (\textit{cf}.\ Section \ref{sec:aposteriori}) and start with examining the \emph{discrete~primal-dual gap estimator} $\eta_{\textup{gap},h}^2\colon K_h^{cr}\times K_h^{rt,*}\to [0,+\infty)$, for every $v_h\in K_h^{cr}$ and $y_h\in K_h^{rt,*}$~defined~by\vspace*{-1mm}
	\begin{align}\label{eq:discrete_primal_dual_gap_estimator}
		\eta_{\textup{gap},h}^2(v_h,y_h)\coloneqq I_h^{cr}(v_h)-D_h^{rt}(y_h)\,.
	\end{align}
	
	The \hspace*{-0.1mm}discrete \hspace*{-0.1mm}primal-dual \hspace*{-0.1mm}gap \hspace*{-0.1mm}estimator \hspace*{-0.1mm}(\textit{cf}.\ \hspace*{-0.1mm}\eqref{eq:discrete_primal_dual_gap_estimator})  \hspace*{-0.1mm}can \hspace*{-0.1mm}be \hspace*{-0.1mm}decomposed \hspace*{-0.1mm}into \hspace*{-0.1mm}two \hspace*{-0.1mm}\mbox{contributions}~\hspace*{-0.1mm}that precisely measure the violation of the discrete convex optimality relations \eqref{eq:discrete_optimality.1},\eqref{eq:discrete_optimality.2},~respectively.
	
	\begin{lemma}[discrete primal-dual gap estimator]\label{lem:discrete_primal_dual_gap_estimator}
		For every $v_h\hspace*{-0.1em}\in\hspace*{-0.1em} K_h^{cr}$ and $y_h\hspace*{-0.1em}\in\hspace*{-0.1em} K_h^{rt,*}$,~we~have~that\vspace*{-0.5mm}
		\begin{align*}
			\eta_{\textup{gap},h}^2(v_h,y_h)&\coloneqq  \eta_{A,\textup{gap},h}^2(v_h,y_h)+\eta_{B,\textup{gap},h}^2(v_h,y_h)\,,\\
			\eta_{A,\textup{gap},h}^2(v_h,y_h)&\coloneqq 	\tfrac{1}{2} \|\nabla_h v_h-\Pi_h y_h\|_{\Omega}^2\,,\\
			\eta_{B,\textup{gap},h}^2(v_h,y_h)&\coloneqq (y_h\cdot n,\pi_hv_h-\chi_h)_{\Gamma_C}\,.
		\end{align*}
	\end{lemma}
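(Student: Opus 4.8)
The plan is to reproduce, line for line, the argument used for Lemma~\ref{lem:primal_dual_gap_estimator}, but on the discrete level: the continuous integration-by-parts formula \eqref{eq:pi_cont} is replaced by its discrete counterpart \eqref{eq:pi}, and the admissibility conditions \eqref{eq:admissibility.1}--\eqref{eq:admissibility.3} are replaced by the discrete admissibility conditions \eqref{eq:discrete_admissibility.1}--\eqref{eq:discrete_admissibility.3}, which are precisely the defining constraints of $K_h^{rt,*}$ and therefore available for every $y_h\in K_h^{rt,*}$.

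First I would observe that for $v_h\in K_h^{cr}$ the indicator terms in \eqref{eq:discrete_primal} vanish, so $I_h^{cr}(v_h)=\tfrac12\|\nabla_h v_h\|_\Omega^2-(f_h,\Pi_h v_h)_\Omega-(g_h,\pi_h v_h)_{\Gamma_N}$, and likewise for $y_h\in K_h^{rt,*}$ the indicator terms in \eqref{eq:discrete_dual} vanish, so $D_h^{rt}(y_h)=-\tfrac12\|\Pi_h y_h\|_\Omega^2+(y_h\cdot n,\chi_h)_{\Gamma_D\cup\Gamma_C}$. Subtracting these, and using $\textup{div}\,y_h=-f_h$ a.e.\ in $\Omega$ and $y_h\cdot n=g_h$ a.e.\ on $\Gamma_N$, turns $\eta_{\textup{gap},h}^2(v_h,y_h)=I_h^{cr}(v_h)-D_h^{rt}(y_h)$ into $\tfrac12\|\nabla_h v_h\|_\Omega^2+(\textup{div}\,y_h,\Pi_h v_h)_\Omega+\tfrac12\|\Pi_h y_h\|_\Omega^2-(y_h\cdot n,\pi_h v_h)_{\Gamma_N}-(y_h\cdot n,\chi_h)_{\Gamma_D\cup\Gamma_C}$.

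Next I would apply the discrete integration-by-parts formula \eqref{eq:pi} to replace $(\textup{div}\,y_h,\Pi_h v_h)_\Omega$ by $(\pi_h v_h,y_h\cdot n)_{\partial\Omega}-(\nabla_h v_h,\Pi_h y_h)_\Omega$, split the boundary pairing over $\Gamma_D$, $\Gamma_N$, $\Gamma_C$ (this splitting is exact thanks to the standing assumption $\mathcal{S}_h^{\partial\Omega}=\mathcal{S}_h^{\Gamma_D}\dot{\cup}\mathcal{S}_h^{\Gamma_C}\dot{\cup}\mathcal{S}_h^{\Gamma_N}$), cancel the $\Gamma_N$ contribution against $-(y_h\cdot n,\pi_h v_h)_{\Gamma_N}$, and regroup the $\Gamma_D$ and $\Gamma_C$ terms into $(y_h\cdot n,\pi_h v_h-\chi_h)_{\Gamma_D}+(y_h\cdot n,\pi_h v_h-\chi_h)_{\Gamma_C}$. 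Since $v_h\in K_h^{cr}$ forces $\pi_h v_h=u_D^h=\chi_h$ a.e.\ on $\Gamma_D$, the first pairing vanishes, leaving exactly $\eta_{B,\textup{gap},h}^2(v_h,y_h)=(y_h\cdot n,\pi_h v_h-\chi_h)_{\Gamma_C}$. Finally the binomial formula collects $\tfrac12\|\nabla_h v_h\|_\Omega^2-(\nabla_h v_h,\Pi_h y_h)_\Omega+\tfrac12\|\Pi_h y_h\|_\Omega^2=\tfrac12\|\nabla_h v_h-\Pi_h y_h\|_\Omega^2=\eta_{A,\textup{gap},h}^2(v_h,y_h)$, giving the asserted decomposition.

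The computation is entirely routine and, unlike the continuous case, involves no trace or duality-pairing subtleties, since every quantity appearing is a genuine integrable function on $\Omega$ or on the boundary skeleton. The only point requiring a little attention is the exact splitting of the boundary integrals, which I would justify by pointing to the mesh compatibility assumption on $\Gamma_D,\Gamma_C,\Gamma_N$ and to the compatibility $\chi_h=u_D^h$ a.e.\ on $\Gamma_D$ together with the Dirichlet condition built into $K_h^{cr}$; this is the step I would flag as the (mild) crux of the argument.
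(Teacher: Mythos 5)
Your proposal is correct and follows essentially the same route as the paper's own proof: substitute the discrete admissibility conditions into $I_h^{cr}(v_h)-D_h^{rt}(y_h)$, apply the discrete integration-by-parts formula \eqref{eq:pi}, cancel the $\Gamma_N$ and $\Gamma_D$ boundary contributions (the latter via $\pi_h v_h=\chi_h$ a.e.\ on $\Gamma_D$), and finish with the binomial formula. The only difference is presentational: you spell out the boundary splitting and the role of the mesh compatibility assumption, which the paper leaves implicit.
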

	
	\begin{remark}[interpretation of the components of the discrete primal-dual gap estimator]\hphantom{                   }
		\begin{itemize}[noitemsep,topsep=2pt,leftmargin=!,labelwidth=\widthof{(iii)},font=\itshape]
			\item[(i)] The estimator $\eta_{A,\textup{gap},h}^2$ measures the violation of the discrete convex optimality~relation~\eqref{eq:discrete_optimality.1};
			\item[(ii)] The estimator $\eta_{B,\textup{gap},h}^2$ measures the violation of the discrete convex optimality~relation~\eqref{eq:discrete_optimality.2}.
		\end{itemize}
	\end{remark}
	
	\begin{proof}[Proof (of Lemma \ref{lem:discrete_primal_dual_gap_estimator})]\let\qed\relax
		Using the discrete admissibility conditions \eqref{eq:discrete_admissibility.1}--\eqref{eq:discrete_admissibility.3}, the discrete integra-tion-by-parts formula \eqref{eq:pi}, and the binomial formula, 
		for every $v_h\in K_h^{cr}$ and $y_h\in K_h^{rt,*}$, due to $ \pi_h v_h=\chi_h$ a.e.\ on $\Gamma_D$, 
		we find that
		\begin{align*}
			I_h^{cr}(v_h)-D_h^{rt}(y_h)&= \tfrac{1}{2}\| \nabla_h v_h\|_{\Omega}^2-(f_h,\Pi_h v_h)_{\Omega}-(g_h,\pi_h v_h)_{\Gamma_N}+\tfrac{1}{2}\| \Pi_h y_h\|_{\Omega}^2-(y_h\cdot n, \chi_h)_{\Gamma_D\cup\Gamma_C}\\&
			= \tfrac{1}{2}\| \nabla_h v_h\|_{\Omega}^2+(\textup{div}\,y_h,\Pi_hv_h)_{\Omega}+\tfrac{1}{2}\| \Pi_h y_h\|_{\Omega}^2\\&\quad-(y_h\cdot n,\pi_h v_h)_{\Gamma_N}-( y_h\cdot n, \chi_h)_{\Gamma_D\cup\Gamma_C}
			\\&
			= \tfrac{1}{2}\| \nabla_h v_h\|_{\Omega}^2-(\Pi_hy_h,\nabla_h v_h)_{\Omega}+\tfrac{1}{2}\| \Pi_h y_h\|_{\Omega}^2+(y_h\cdot n, \pi_h v_h-\chi_h)_{\Gamma_D\cup\Gamma_C}
			\\&
			= \tfrac{1}{2}\| \nabla_h v_h-y_h\|_{\Omega}^2+(y_h\cdot n, \pi_h v_h-\chi_h)_{\Gamma_C}\,.\tag*{$\qedsymbol$}
		\end{align*}
	\end{proof}
	
	Next, \hspace*{-0.1mm}we  \hspace*{-0.1mm}identify \hspace*{-0.1mm}the \hspace*{-0.1mm}\emph{optimal \hspace*{-0.1mm}strong \hspace*{-0.1mm}convexity \hspace*{-0.1mm}measures} \hspace*{-0.1mm}for \hspace*{-0.1mm}the \hspace*{-0.1mm}discrete \hspace*{-0.1mm}primal~\hspace*{-0.15mm}energy~\hspace*{-0.1mm}\mbox{functional} \eqref{eq:discrete_primal} \hspace{-0.175mm}at \hspace{-0.175mm}the \hspace{-0.175mm}discrete \hspace{-0.15mm}primal \hspace{-0.175mm}solution \hspace{-0.175mm}$u_h^{cr}\hspace*{-0.2em}\in\hspace*{-0.2em} K_h^{cr}$, \hspace{-0.175mm}\textit{i.e.}, 
	\hspace{-0.175mm}$\rho_{I_h^{cr}}^2\colon\hspace*{-0.2em} K_h^{cr}\hspace*{-0.2em}\to\hspace*{-0.2em} [0,+\infty)$,~\hspace{-0.175mm}for~\hspace{-0.175mm}every~\hspace{-0.175mm}${v_h\hspace*{-0.2em}\in\hspace*{-0.2em} K_h^{cr}}$~\hspace{-0.175mm}\mbox{defined}~\hspace{-0.175mm}by
	\begin{align}\label{def:discrete_optimal_primal_error}
		\smash{\rho_{I_h^{cr}}^2(v_h)\coloneqq I_h^{cr}(v_h)-I_h^{cr}(u_h^{cr})\,,}
	\end{align}
	and for the negative of the discrete dual energy functional \eqref{eq:discrete_dual}, \textit{i.e.}, $\rho_{\smash{-D_h^{rt}}}^2\colon\hspace*{-0.1em} K_h^{rt,*}\hspace*{-0.1em}\to\hspace*{-0.1em} [0,+\infty)$,  
	for every~$\smash{y_h\hspace*{-0.1em}\in\hspace*{-0.1em} K_h^{rt,*}}$ defined by
	\begin{align}\label{def:discrete_optimal_dual_error}
		\smash{\rho_{-D_h^{rt}}^2(y_h)\coloneqq- D_h^{rt}(y_h)+D_h^{rt}(z_h^{rt})\,,}
	\end{align}
	which \hspace*{-0.1em}will \hspace*{-0.1em}serve \hspace*{-0.1em}as \hspace*{-0.1em}\emph{`natural'} \hspace*{-0.1em}error \hspace*{-0.1em}quantities \hspace*{-0.1em}in \hspace*{-0.1em}the \hspace*{-0.1em}discrete \hspace*{-0.1em}primal-dual \hspace*{-0.1em}gap \hspace*{-0.1em}identity \hspace*{-0.1em}(\textit{cf}.\ \hspace*{-0.1em}Theorem~\hspace*{-0.1em}\ref{thm:discrete_prager_synge_identity}).
	
	\begin{lemma}[discrete optimal strong convexity measures]\label{lem:discrete_strong_convexity_measures}
		The following statements apply:
		\begin{itemize}[noitemsep,topsep=2pt,leftmargin=!,labelwidth=\widthof{(ii)}]
			\item[(i)] For every $v_h\in K_h^{cr}$, we have that\vspace*{-0.5mm}
			\begin{align*}
				\smash{\rho_{I_h^{cr}}^2(v_h)=\tfrac{1}{2}\|\nabla_h v_h-\nabla_h u_h^{cr}\|_{\Omega}^2+( z_h^{rt}\cdot n,\pi_hv_h-\chi_h)_{\Gamma_C}\,.}
			\end{align*}
			\item[(ii)] For every $y_h\in K_h^{rt,*}$, we have that\vspace*{-0.5mm}
			\begin{align*}
				\smash{\rho_{-D_h^{rt}}^2(y_h)=\tfrac{1}{2}\|\Pi_h y_h-\Pi_h z_h^{rt}\|_{\Omega}^2+(y_h\cdot n,\pi_h u_h^{cr}-\chi_h)_{\Gamma_C}\,.}
			\end{align*}
		\end{itemize}
		\end{lemma}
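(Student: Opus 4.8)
The plan is to run the argument of the continuous counterpart, Lemma~\ref{lem:strong_convexity_measures}, verbatim at the discrete level, replacing the integration-by-parts formula \eqref{eq:pi_cont} by the discrete one \eqref{eq:pi}, the admissibility conditions \eqref{eq:admissibility.1}--\eqref{eq:admissibility.3} by \eqref{eq:discrete_admissibility.1}--\eqref{eq:discrete_admissibility.3}, and the convex optimality relations \eqref{eq:optimality.1},\eqref{eq:optimality.2} by \eqref{eq:discrete_optimality.1},\eqref{eq:discrete_optimality.2}. Throughout, the indicator parts of $I_h^{cr}$ and $D_h^{rt}$ vanish on $K_h^{cr}$ and $K_h^{rt,*}$, respectively, and every boundary pairing that occurs is between side-wise constant quantities, so that the $L^2$-pairings $(\cdot,\cdot)_\gamma$ are well defined.

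For part~(i), I would first write $\rho_{I_h^{cr}}^2(v_h)=\tfrac12\|\nabla_h v_h\|_\Omega^2-\tfrac12\|\nabla_h u_h^{cr}\|_\Omega^2-(f_h,\Pi_h v_h-\Pi_h u_h^{cr})_\Omega-(g_h,\pi_h v_h-\pi_h u_h^{cr})_{\Gamma_N}$, apply the binomial formula to the two gradient norms, substitute $\nabla_h u_h^{cr}=\Pi_h z_h^{rt}$ via \eqref{eq:discrete_optimality.1}, and then apply \eqref{eq:pi} to the pair $v_h-u_h^{cr}\in\mathcal{S}^{1,cr}(\mathcal{T}_h)$ and $z_h^{rt}\in\mathcal{R}T^0(\mathcal{T}_h)$; using $\textup{div}\,z_h^{rt}=-f_h$ from \eqref{eq:discrete_admissibility.1} cancels the forcing term and leaves $\tfrac12\|\nabla_h v_h-\nabla_h u_h^{cr}\|_\Omega^2+(z_h^{rt}\cdot n,\pi_h v_h-\pi_h u_h^{cr})_{\partial\Omega}-(g_h,\pi_h v_h-\pi_h u_h^{cr})_{\Gamma_N}$. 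Splitting $\partial\Omega$ into $\Gamma_D$, $\Gamma_N$, $\Gamma_C$: the $\Gamma_D$ contribution vanishes since $\pi_h v_h=\pi_h u_h^{cr}=u_D^h$ there, the $\Gamma_N$ contribution cancels the $g_h$ term by \eqref{eq:discrete_admissibility.2}, and on $\Gamma_C$ the relation $z_h^{rt}\cdot n\,(\pi_h u_h^{cr}-\chi_h)=0$ from \eqref{eq:discrete_optimality.2} replaces $\pi_h u_h^{cr}$ by $\chi_h$, yielding the claimed identity.

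Part~(ii) is entirely analogous: I would write $\rho_{-D_h^{rt}}^2(y_h)=\tfrac12\|\Pi_h y_h\|_\Omega^2-\tfrac12\|\Pi_h z_h^{rt}\|_\Omega^2+(z_h^{rt}\cdot n-y_h\cdot n,\chi_h)_{\Gamma_D\cup\Gamma_C}$, apply the binomial formula, substitute $\Pi_h z_h^{rt}=\nabla_h u_h^{cr}$ via \eqref{eq:discrete_optimality.1}, and apply \eqref{eq:pi} to $u_h^{cr}$ and the Raviart--Thomas field $y_h-z_h^{rt}$; since both $y_h$ and $z_h^{rt}$ have divergence $-f_h$, the divergence term drops, producing $\tfrac12\|\Pi_h y_h-\Pi_h z_h^{rt}\|_\Omega^2+((y_h-z_h^{rt})\cdot n,\pi_h u_h^{cr})_{\partial\Omega}+(z_h^{rt}\cdot n-y_h\cdot n,\chi_h)_{\Gamma_D\cup\Gamma_C}$. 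The normal traces of $y_h$ and $z_h^{rt}$ coincide on $\Gamma_N$ (both equal $g_h$) by \eqref{eq:discrete_admissibility.2}, so the $\Gamma_N$ part of the $\partial\Omega$-integral vanishes; combining the surviving terms gives $((y_h-z_h^{rt})\cdot n,\pi_h u_h^{cr}-\chi_h)_{\Gamma_D\cup\Gamma_C}$, whose $\Gamma_D$ part vanishes because $\pi_h u_h^{cr}=u_D^h=\chi_h$ there, and the $z_h^{rt}$-part on $\Gamma_C$ vanishes by \eqref{eq:discrete_optimality.2}. This leaves $\tfrac12\|\Pi_h y_h-\Pi_h z_h^{rt}\|_\Omega^2+(y_h\cdot n,\pi_h u_h^{cr}-\chi_h)_{\Gamma_C}$, as asserted.

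There is no genuine obstacle here; the computation is routine once the discrete duality apparatus of Proposition~\ref{prop:discrete_duality} is in place. The only point requiring care is the bookkeeping of the three boundary parts $\Gamma_D$, $\Gamma_N$, $\Gamma_C$ when splitting the $\partial\Omega$-integral, and checking that \eqref{eq:pi} is invoked with its arguments lying in the correct discrete spaces ($v_h-u_h^{cr}\in\mathcal{S}^{1,cr}(\mathcal{T}_h)$, resp.\ $u_h^{cr}\in\mathcal{S}^{1,cr}(\mathcal{T}_h)$, paired with Raviart--Thomas fields).
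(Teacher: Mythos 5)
Your proposal is correct and follows essentially the same route as the paper: binomial formula, substitution of $\nabla_h u_h^{cr}=\Pi_h z_h^{rt}$ via \eqref{eq:discrete_optimality.1}, the discrete integration-by-parts formula \eqref{eq:pi} combined with the admissibility conditions \eqref{eq:discrete_admissibility.1},\eqref{eq:discrete_admissibility.2}, the vanishing of the $\Gamma_D$ and $\Gamma_N$ boundary contributions, and finally the complementarity relation \eqref{eq:discrete_optimality.2} on $\Gamma_C$. The bookkeeping of the boundary parts matches the paper's computation in both parts (i) and (ii).
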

		
			\begin{remark}
			Note that for every $v_h\in K_h^{cr}$, it holds that $\pi_hv_h-\chi_h\ge 0$ a.e.\ on $\Gamma_C$, and for every $y_h\in K_h^{rt,*}$, we have that
			$y_h\cdot n\ge 0$ a.e.\ on $\Gamma_C$, so that for every $v_h\in K_h^{cr}$ and $y_h\in K_h^{rt,*}$, we have that\vspace*{-1mm}
			\begin{align*}
				y_h\cdot n\, (\pi_hv_h-\chi_h)\ge 0\quad\text{ a.e.\ on }\Gamma_C\,.
			\end{align*}
		\end{remark}

		\begin{proof}[Proof (of Lemma \ref{lem:discrete_strong_convexity_measures})]\let\qed\relax
		\emph{ad (i).} Using the binomial formula, the discrete admissibility~\mbox{conditions} \eqref{eq:discrete_admissibility.1},\eqref{eq:discrete_admissibility.2}, the discrete convex optimality relation~\eqref{eq:discrete_optimality.1}, the discrete integration-by-parts formula \eqref{eq:pi}, and the discrete convex optimality relation~\eqref{eq:discrete_optimality.2},  for every $v_h\in K_h^{cr}$, due to $\pi_hv_h= u_D^h=\pi_hu_D^{cr}$ a.e.\ on $\Gamma_D$, we find that 
		\begin{align*}
			I_h^{cr}(v_h)-I_h^{cr}(u_h^{cr})&=\tfrac{1}{2}\|\nabla_h v_h\|_{\Omega}^2-\tfrac{1}{2}\|\nabla_h u_h^{cr}\|_{\Omega}^2-(f_h,\Pi_h v_h-\Pi_h u_h^{cr})_{\Omega}-(g_h,\pi_h v_h-\pi_h u_h^{cr})_{\Gamma_N}\\&
			=\tfrac{1}{2}\|\nabla_h v_h-\nabla_h u_h^{cr}\|_{\Omega}^2+(\Pi_hz_h^{rt},\nabla_h v_h-\nabla_h u_h^{cr})_{\Omega}\\&\quad-(\textup{div}\,z_h^{rt},\Pi_h v_h-\Pi_h u_h^{cr})_{\Omega}-(z_h^{rt}\cdot n,\pi_h v_h-\pi_h u_h^{cr})_{\Gamma_N}
			\\&
			=\tfrac{1}{2}\|\nabla_h v_h-\nabla_h u_h^{cr}\|_{\Omega}^2+(z_h^{rt}\cdot n,\pi_h v_h-\pi_h u_h^{cr})_{\Gamma_C}
			\\&
			=\tfrac{1}{2}\|\nabla_h v_h-\nabla_h u_h^{cr}\|_{\Omega}^2+(z_h^{rt}\cdot n,\pi_h v_h-\chi_h)_{\Gamma_C}\,.
		\end{align*}
		
		\emph{ad (ii).} Using the binomial formula, the admissibility conditions \eqref{eq:discrete_admissibility.1}--\eqref{eq:discrete_admissibility.3}, the convex optimality relation \eqref{eq:discrete_optimality.1},  the discrete integration-by-parts formula \eqref{eq:pi}, again, the admissibility condition \eqref{eq:discrete_admissibility.1}, and  the convex optimality relation \eqref{eq:discrete_optimality.2}, for every $y_h\in K_h^{rt,*}$, due to $y_h\cdot n= g_h = z_h^{rt}\cdot n$ a.e.\ on $\Gamma_N$ and $\pi_hu_h= u_D^h=\chi_h$ a.e.\ on $\Gamma_D$, we find that 
		\begin{align*}
			-D_h^{rt}(y_h)+D_h^{rt}(z_h^{rt})&=\tfrac{1}{2}\|\Pi_hy_h\|_{\Omega}^2-\tfrac{1}{2}\|\Pi_hz_h^{rt}\|_{\Omega}^2+(z_h^{rt}\cdot n-y_h\cdot n,\chi_h)_{\partial\Omega}
			\\&
			=\tfrac{1}{2}\|\Pi_h y_h-z_h^{rt}\|_{\Omega}^2+(\nabla_h u_h^{cr}, \Pi_h y_h-\Pi_h z_h^{rt})_{\Omega}+( z_h^{rt}\cdot n-y_h\cdot n,\chi_h)_{\partial\Omega}
			\\&
			=\tfrac{1}{2}\|\Pi_h y_h-\Pi_h z_h^{rt}\|_{\Omega}^2\\&\quad+(\textup{div}\,z_h^{rt}-\textup{div}\,y_h,\Pi_hu_h^{cr})_{\Omega}+(z_h^{rt}\cdot n-y_h\cdot n,\chi_h-\pi_hu_h^{cr})_{\partial\Omega}
			\\&
			=\tfrac{1}{2}\|\Pi_h y_h-\Pi_h z_h^{rt}\|_{\Omega}^2+(y_h\cdot n,\pi_hu_h^{cr}-\chi_h )_{\Gamma_C}\,.\tag*{$\qedsymbol$}
		\end{align*}
		\end{proof}

		Eventually, we have everything at our disposal to establish a discrete \emph{a posteriori}~error~identity that identifies the \emph{discrete primal-dual total error} $\rho_{\textup{tot},h}^2\colon K_h^{cr}\times K_h^{rt,*}\to [0,+\infty)$, for every $v_h\in K_h^{cr}$ and $y_h\in K_h^{rt,*}$ defined by 
		\begin{align}\label{eq:discrete_primal_dual_error}
			\smash{\rho_{\textup{tot},h}^2(v_h,y_h)\coloneqq \rho_{I_h^{cr}}^2(v_h)+\rho_{-D_h^{rt}}^2(y_h)\,,}
		\end{align}
		with the discrete primal-dual gap estimator $\eta_{\textup{gap},h}^2\colon K_h^{cr}\times K_h^{rt,*}\to [0,+\infty)$ (\textit{cf}.\ \eqref{eq:discrete_primal_dual_gap_estimator}).
		
	\begin{theorem}[discrete primal-dual gap identity]\label{thm:discrete_prager_synge_identity}
		For every $v_h\hspace*{-0.15em}\in\hspace*{-0.15em} K_h^{cr}$ and $y_h\hspace*{-0.15em}\in\hspace*{-0.15em} K_h^{rt,*}$,~we~have~that
		\begin{align*}
			\smash{\rho_{\textup{tot},h}^2(v_h,y_h)=\eta_{\textup{gap},h}^2(v_h,y_h)\,.}
		\end{align*}
	\end{theorem}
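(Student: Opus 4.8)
The plan is to imitate the proof of Theorem~\ref{thm:prager_synge_identity} on the discrete level: all the structural ingredients have already been assembled, so the identity reduces to unfolding definitions and invoking discrete strong duality. First I would expand the discrete primal-dual total error via its definition \eqref{eq:discrete_primal_dual_error}, so that $\rho_{\textup{tot},h}^2(v_h,y_h)=\rho_{I_h^{cr}}^2(v_h)+\rho_{-D_h^{rt}}^2(y_h)$, and then substitute the definition \eqref{def:discrete_optimal_primal_error} of $\rho_{I_h^{cr}}^2$ and the definition \eqref{def:discrete_optimal_dual_error} of $\rho_{-D_h^{rt}}^2$ to obtain $\rho_{\textup{tot},h}^2(v_h,y_h)=\big(I_h^{cr}(v_h)-I_h^{cr}(u_h^{cr})\big)+\big(-D_h^{rt}(y_h)+D_h^{rt}(z_h^{rt})\big)$. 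Since $v_h\in K_h^{cr}$ and $y_h\in K_h^{rt,*}$, the relevant indicator terms in \eqref{eq:discrete_primal} and \eqref{eq:discrete_dual} vanish and all four quantities are finite real numbers, so this rearrangement is legitimate.

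The only nontrivial ingredient is the discrete strong duality relation \eqref{eq:discrete_strong_duality} from Proposition~\ref{prop:discrete_duality}(ii), \textit{i.e.}\ $I_h^{cr}(u_h^{cr})=D_h^{rt}(z_h^{rt})$. Applying it cancels the two terms carrying the discrete primal and dual solutions, leaving $\rho_{\textup{tot},h}^2(v_h,y_h)=I_h^{cr}(v_h)-D_h^{rt}(y_h)$, which is precisely the definition \eqref{eq:discrete_primal_dual_gap_estimator} of the discrete primal-dual gap estimator $\eta_{\textup{gap},h}^2(v_h,y_h)$. This completes the proof.

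I do not expect any real obstacle: the difficulty has been front-loaded into Proposition~\ref{prop:discrete_duality} (constructing the discrete Fenchel dual problem, producing the discrete dual solution $z_h^{rt}$ and the reconstruction \eqref{eq:generalized_marini}, and establishing the discrete strong duality equality), after which Theorem~\ref{thm:discrete_prager_synge_identity} is pure bookkeeping. If one additionally wants the fully explicit form announced in \eqref{intro:apriori}, one would combine the identity just proved with Lemma~\ref{lem:discrete_primal_dual_gap_estimator} to rewrite the right-hand side and with Lemma~\ref{lem:discrete_strong_convexity_measures} to rewrite $\rho_{I_h^{cr}}^2$ and $\rho_{-D_h^{rt}}^2$ on the left-hand side, thereby expressing both sides in terms of $\|\nabla_h v_h-\nabla_h u_h^{cr}\|_{\Omega}$, $\|\Pi_h y_h-\Pi_h z_h^{rt}\|_{\Omega}$, and the contact-boundary pairings.
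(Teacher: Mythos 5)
Your argument is correct and coincides with the paper's own proof: both unfold the definitions of $\rho_{\textup{tot},h}^2$, $\rho_{I_h^{cr}}^2$, $\rho_{-D_h^{rt}}^2$, and $\eta_{\textup{gap},h}^2$, and then cancel $I_h^{cr}(u_h^{cr})$ against $D_h^{rt}(z_h^{rt})$ via the discrete strong duality relation \eqref{eq:discrete_strong_duality}. The closing remarks about combining with Lemma \ref{lem:discrete_primal_dual_gap_estimator} and Lemma \ref{lem:discrete_strong_convexity_measures} are a correct description of how the paper subsequently uses the identity, but are not needed for the proof itself.
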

	
	\begin{proof}\let\qed\relax
		Using the definitions \eqref{eq:discrete_primal_dual_gap_estimator}, \eqref{def:discrete_optimal_primal_error},  \eqref{def:discrete_optimal_dual_error}, \eqref{eq:discrete_primal_dual_error}, and the discrete strong duality relation~\eqref{eq:discrete_strong_duality}, for every $v_h\in K_h^{cr}$ and $y_h\in K_h^{rt,*}$, we find that
		\begin{align*}
			\rho_{\textup{tot},h}^2(v_h,y_h)&=\rho_{I_h^{cr}}^2(v_h)+\rho_{-D_h^{rt}}^2(y_h)
			\\&=I_h^{cr}(v_h)-I_h^{cr}(u_h^{cr})+D_h^{rt}(z_h^{rt})-D_h^{rt}(y_h)
			\\&=I_h^{cr}(v_h)-D_h^{rt}(y_h)
			\\&=\eta_{\textup{gap},h}^2(v_h,y_h)\,.\tag*{$\qedsymbol$}
		\end{align*}
	\end{proof}
	
	Inserting \hspace*{-0.1mm}the \hspace*{-0.1mm}(Fortin) \hspace*{-0.1mm}quasi-interpolations \hspace*{-0.1mm}\eqref{def:CR-Interpolant} \hspace*{-0.1mm}and \hspace*{-0.1mm}\eqref{def:RT-Interpolant} \hspace*{-0.1mm}of \hspace*{-0.1mm}the \hspace*{-0.1mm}primal \hspace*{-0.1mm}and \hspace*{-0.1mm}the \hspace*{-0.1mm}dual \hspace*{-0.1mm}solution,~\hspace*{-0.1mm}respectively, \hspace*{-0.1mm}in \hspace*{-0.1mm}the \hspace*{-0.1mm}primal-dual \hspace*{-0.1mm}gap \hspace*{-0.1mm}identity \hspace*{-0.1mm}(\textit{cf}.\ \hspace*{-0.1mm}Theorem \hspace*{-0.1mm}\ref{thm:discrete_prager_synge_identity}), \hspace*{-0.1mm}we \hspace*{-0.1mm}arrive \hspace*{-0.1mm}at \hspace*{-0.1mm}an \hspace*{-0.1mm}\textit{a \hspace*{-0.1mm}priori}~\hspace*{-0.1mm}error~\hspace*{-0.1mm}\mbox{identity}, in which the right-hand side represents the residual in the discrete formulation.\enlargethispage{8mm}
	
	\begin{theorem}[\textit{a priori} error identity and error decay rates]\label{thm:apriori_identity}
		If $f_h\coloneqq \Pi_h f\in \mathcal{L}^0(\mathcal{T}_h)$, $g_h\coloneqq \pi_h g\in \mathcal{L}^0(\mathcal{S}_h^{\Gamma_N})$,~where~$g\in L^2(\Gamma_N)$, $u_D^h\coloneqq \pi_h u_D\in \mathcal{L}^0(\mathcal{S}_h^{\Gamma_D})$, and $\chi_h\coloneqq  \pi_h\Pi_h^{cr}\chi\in \mathcal{L}^0(\mathcal{S}_h^{\Gamma_C})$, then the following statements~\mbox{apply}:
		\begin{itemize}[noitemsep,topsep=2pt,leftmargin=!,labelwidth=\widthof{(iii)}]
			\item[(i)] If $z\in (L^p(\Omega))^d$, where $p>2$,  then $\Pi_h^{cr}u\in K_h^{cr}$, $\Pi_h^{rt}z\in K_h^{rt,*}$, and 
			\begin{align*}
				\smash{\rho_{\textup{tot},h}^2(\Pi_h^{cr}u,\Pi_h^{rt}z)= \tfrac{1}{2} \|\Pi_h z-\Pi_h \Pi_h^{rt}z\|_{\Omega}^2+(z\cdot n,\pi_h\Pi_h^{cr}(u-\chi)-(u-\chi))_{\Gamma_C}\,.}
			\end{align*}
			\item[(ii)] If $u,\chi\in H^{1+s}(\Omega)$, where $s \in (\frac{1}{2},1]$, and if $\Delta\chi\in L^2(\Omega)$ or $d=2$, 
			then 
			\begin{align*}
				\smash{\rho_{\textup{tot},h}^2(\Pi_h^{cr}u,\Pi_h^{rt}z)\leq c\, h^{2s}\,(\|u\|_{1+s,\Omega}^2+\|\chi\|_{1+s,\Omega}^2)\,.}
			\end{align*} 
		\end{itemize}
	\end{theorem}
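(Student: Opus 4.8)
The plan is to read the \emph{a priori} error identity~(i) off the discrete primal--dual gap identity of Theorem~\ref{thm:discrete_prager_synge_identity}, and then to estimate the two resulting terms to obtain the decay rate~(ii). First I would verify admissibility. By the moment-preservation property~\eqref{eq:trace_preservation}, $\pi_h\Pi_h^{cr}u=\pi_h u$ on $\cup\mathcal{S}_h$; on $\Gamma_D$ this equals $\pi_h u_D=u_D^h$, and on $\Gamma_C$ side-wise averaging together with $u\ge\chi$ a.e.\ there gives $\pi_h u\ge\pi_h\chi=\pi_h\Pi_h^{cr}\chi=\chi_h$, so $\Pi_h^{cr}u\in K_h^{cr}$. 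The hypothesis $z\in(L^p(\Omega))^d$, $p>2$, guarantees that $\Pi_h^{rt}z$ is well defined and that $z\cdot n|_{\Gamma_C}\in L^1(\Gamma_C)$; then $\operatorname{div}\Pi_h^{rt}z=\Pi_h\operatorname{div}z=-f_h$ by~\eqref{eq:div_preservation}, and $\Pi_h^{rt}z\cdot n=\pi_h(z\cdot n)$ by~\eqref{eq:normal_trace_preservation} equals $g_h$ on $\Gamma_N$ (since $g\in L^2(\Gamma_N)$ and~\eqref{eq:admissibility.2} force $z\cdot n=g$ a.e.\ there) and is $\ge 0$ on $\Gamma_C$ (by~\eqref{eq:admissibility.3}), so $\Pi_h^{rt}z\in K_h^{rt,*}$.

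With admissibility established, Theorem~\ref{thm:discrete_prager_synge_identity} and Lemma~\ref{lem:discrete_primal_dual_gap_estimator} yield $\rho_{\textup{tot},h}^2(\Pi_h^{cr}u,\Pi_h^{rt}z)=\tfrac12\|\nabla_h\Pi_h^{cr}u-\Pi_h\Pi_h^{rt}z\|_\Omega^2+(\Pi_h^{rt}z\cdot n,\pi_h\Pi_h^{cr}u-\chi_h)_{\Gamma_C}$. In the first term $\nabla_h\Pi_h^{cr}u=\Pi_h\nabla u=\Pi_h z$ by the gradient-preservation property~\eqref{eq:grad_preservation} and the continuous optimality relation~\eqref{eq:optimality.1}. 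In the second term $\Pi_h^{rt}z\cdot n=\pi_h(z\cdot n)$ and $\pi_h\Pi_h^{cr}u-\chi_h=\pi_h(u-\chi)=\pi_h\Pi_h^{cr}(u-\chi)$, so the self-adjointness of $\pi_h$ on side-wise constants turns $(\pi_h(z\cdot n),\pi_h\Pi_h^{cr}(u-\chi))_{\Gamma_C}$ into $(z\cdot n,\pi_h\Pi_h^{cr}(u-\chi))_{\Gamma_C}$. Finally, combining~\eqref{eq:optimality.2} with $z\cdot n=g$ on $\Gamma_N$ and $u=\chi=u_D$ on $\Gamma_D$ gives the continuous complementarity relation $(z\cdot n,u-\chi)_{\Gamma_C}=0$, which may be subtracted to reach the stated form of~(i).

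For~(ii), $u\in H^{1+s}(\Omega)$ with $s\in(\tfrac12,1]$ gives $z=\nabla u\in(H^s(\Omega))^d\hookrightarrow(L^p(\Omega))^d$ for some $p>2$, so~(i) applies and~\eqref{eq:RT-Interpolant-Rate} holds. The first term is routine: $\Pi_h$ is an $L^2$-contraction, so $\|\Pi_h z-\Pi_h\Pi_h^{rt}z\|_\Omega\le\|z-\Pi_h^{rt}z\|_\Omega$, and summing~\eqref{eq:RT-Interpolant-Rate} over $\mathcal{T}_h$ (with $h_T\le c\,h$) gives $\le c\,h^{2s}|z|_{s,\Omega}^2\le c\,h^{2s}\|u\|_{1+s,\Omega}^2$, using $|z|_{s,\Omega}=|\nabla u|_{s,\Omega}\le|u|_{1+s,\Omega}$. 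The real work is the contact term $(z\cdot n,\pi_h\Pi_h^{cr}(u-\chi)-(u-\chi))_{\Gamma_C}$. Setting $w\coloneqq u-\chi$ and using $\pi_h\Pi_h^{cr}w=\pi_h w$ and $(z\cdot n,w)_{\Gamma_C}=0$, it equals $(z\cdot n,\pi_h w)_{\Gamma_C}=(z\cdot n-\pi_h(z\cdot n),\pi_h w-w)_{\Gamma_C}$ (the last step uses that $\pi_h w-w$ has vanishing mean on each $S\in\mathcal{S}_h^{\Gamma_C}$). A side-by-side Cauchy--Schwarz estimate reduces this to bounding $\|z\cdot n-\pi_h(z\cdot n)\|_{0,S}$, controlled by $c\,h_T^{s-1/2}|z|_{s,T}$ via a scaled trace inequality and a fractional Poincaré estimate, and $\|w-\pi_h w\|_{0,S}$.

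Here is the main obstacle. The crude bound $\|w-\pi_h w\|_{0,S}\le c\,h_T^{1/2}\|\nabla w\|_{0,T}$ only produces the suboptimal rate $O(h^s)$; to reach the full $O(h^{2s})$ one must exploit the sign conditions $z\cdot n\ge 0$, $w\ge 0$ on $\Gamma_C$ together with the complementarity $z\cdot n\,w=0$ a.e.\ there. Indeed, on each $S$ one has $\int_S z\cdot n\,(w-\pi_h w)=-\langle w\rangle_S\int_S z\cdot n=-|S|^{-1}\big(\int_{S\cap\{w>0\}}w\big)\big(\int_{S\cap\{z\cdot n>0\}}z\cdot n\big)$, so each factor is the $L^2$-mass of a function vanishing on a complementary subset of $S$; fractional Poincaré--Friedrichs estimates for $w|_S\in H^{1/2+s}(S)$ and $z\cdot n|_S\in H^{s-1/2}(S)$ supply the missing half-power in each factor, and summing over $\mathcal{S}_h^{\Gamma_C}$ with Cauchy--Schwarz and the (scaled) trace bounds $\sum_S|w|_{1/2+s,S}^2\lesssim\|u\|_{1+s,\Omega}^2+\|\chi\|_{1+s,\Omega}^2$ and $\sum_S|z\cdot n|_{s-1/2,S}^2\lesssim\|u\|_{1+s,\Omega}^2$ gives $c\,h^{2s}(\|u\|_{1+s,\Omega}^2+\|\chi\|_{1+s,\Omega}^2)$. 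The assumption ``$\Delta\chi\in L^2(\Omega)$ or $d=2$'' is exactly what makes the obstacle part of this step go through in arbitrary dimension: for $d=2$ the one-dimensional boundary traces are Hölder continuous and the argument is elementary, whereas for $d\ge 3$ the extra regularity $\nabla\chi\in H(\operatorname{div};\Omega)$ is used — e.g.\ through the discrete integration-by-parts formula~\eqref{eq:pi} applied to $\Pi_h^{rt}z$ and $\Pi_h^{cr}\chi$ — to control the oscillation of the interpolated obstacle $\chi_h=\pi_h\Pi_h^{cr}\chi$. Adding the two contributions yields the claimed estimate.
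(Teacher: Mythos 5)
Your part~(i) is correct and follows the paper's route exactly (admissibility via \eqref{eq:trace_preservation}, \eqref{eq:div_preservation}, \eqref{eq:normal_trace_preservation}; the identity via Theorem~\ref{thm:discrete_prager_synge_identity}, \eqref{eq:grad_preservation}, \eqref{eq:optimality.1}, and the complementarity $\langle z\cdot n,u-\chi\rangle_{\Gamma_C}=0$ extracted from \eqref{eq:optimality.2}), and so is the bound for the volume term $\tfrac12\|\Pi_h z-\Pi_h\Pi_h^{rt}z\|_\Omega^2$. The gap is in the contact term of part~(ii).

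Having reduced that term to $\sum_S\langle w\rangle_S\int_S z\cdot n\,\mathrm{d}s$ with $w\coloneqq u-\chi$, you assert that ``each factor is the $L^2$-mass of a function vanishing on a complementary subset of $S$'' and that fractional Poincar\'e--Friedrichs inequalities therefore supply a missing half-power \emph{in each factor}. This cannot work as stated: by complementarity the sets $S\cap\{w>0\}$ and $S\cap\{z\cdot n>0\}$ are disjoint, so their complements cover $S$ and only \emph{one} of them is guaranteed to occupy at least half of $S$; the Poincar\'e--Friedrichs constant degenerates like $\vert S\vert/\vert E\vert$ when the vanishing set $E$ is small, so on a given side you can harvest a gain from at most one factor. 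Gaining only from the $z$-factor yields $O(h^{s-1/2})$, only from the $w$-factor $O(h^{s+1/2})$, and both fall short of $O(h^{2s})$ for $s>\tfrac12$. The covariance form $(z\cdot n-\pi_h(z\cdot n),\pi_h w-w)_S$ does rescue the sides where $\{w=0\}$ dominates (there $\nabla_{\!S}w$ also vanishes on a large subset, giving $\|w-\pi_hw\|_S\lesssim h_S^{\,s+1/2}$), but on mixed sides where $\{w>0\}$ dominates, $\|w-\pi_h w\|_S\lesssim h_S\|\nabla_{\!S}w\|_S$ cannot be improved beyond first order by a side-wise constant projection, and $\|\nabla_{\!S}w\|_S$ carries an $h_S^{-1/2}$ from the trace inequality, leaving only $O(h^{s})$ overall.

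What the paper does instead --- and what your simplification $\pi_h\Pi_h^{cr}w=\pi_h w$ throws away --- is keep the Crouzeix--Raviart interpolant in the game. It splits off $(z\cdot n,\Pi_h^{cr}w-w)_{\Gamma_C}$, whose $w$-factor is a \emph{volume} interpolation error of order $h^{1+s}$ traced to $O(h^{s+1/2})$ on $\Gamma_C$, and writes the remainder side-wise as $\nabla_{\!S}\Pi_h^{cr}w\cdot(\textup{id}_{\mathbb{R}^d}-\pi_h\textup{id}_{\mathbb{R}^d})$ via \eqref{representation}. On the problematic mixed sides the a.e.\ identity $z\cdot n\,\nabla_{\!S}w=0$ allows one to replace $\nabla_{\!S}\Pi_h^{cr}w$ by the interpolation error $\nabla_{\!S}\Pi_h^{cr}w-\nabla_{\!S}w=O(h_S^{\,s-1/2})$, while the $z$-oscillation $O(h_S^{\,s-1/2})$ is recovered from $\langle z\cdot n\rangle_{S\cap\{w>0\}}=0$ together with \cite[Lem.\ 8.2.3]{DHHR11}; the product of the three factors $h_S\cdot h_S^{\,s-1/2}\cdot h_S^{\,s-1/2}$ gives $h_S^{2s}$ with element-wise summable constants. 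Without this (or an equivalent) device your argument does not reach the rate $h^{2s}$ uniformly over the contact set. A further minor point: the hypothesis ``$\Delta\chi\in L^2(\Omega)$ or $d=2$'' enters (via Remark~\ref{rem:apriori_identity}) to guarantee that $\nabla(u-\chi)$ and $z\cdot n$ have square-integrable boundary traces, so that the side-wise quantities above are well defined --- not, as you suggest, to control the oscillation of $\chi_h$ through a discrete integration by parts.
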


	\begin{remark}\label{rem:apriori_identity}
		The \textit{a priori} error estimate in Theorem \ref{thm:apriori_identity}(ii) holds as soon as $u\in H^1(\partial\Omega)$:\vspace*{-0.5mm}
		\begin{itemize}[noitemsep,topsep=2pt,leftmargin=!,labelwidth=\widthof{(ii)}]
			\item[(i)] According to \cite[Cor. 3.7]{BehrndtGesztesyMitrea22}, the trace operator\vspace*{-0.75mm}
			\begin{align*}
				\smash{\textup{Tr}\coloneqq (\textup{tr},\textup{tr}\circ\nabla)^\top 	\colon H_\Delta(\Omega)\coloneqq\big\{ v\in H^{\frac{3}{2}}(\Omega)\mid \Delta v\in L^2(\Omega)\big\}\to  H^1(\partial\Omega)\times (L^2(\partial\Omega))^d\,,}\\[-6mm]
			\end{align*}
			is well-defined, linear, and continuous. Thus, if $u\in \smash{H^{\frac{3}{2}}(\Omega)}$, due to ${\Delta u=\textup{div}\, z=-f\in L^2(\Omega)}$, we have that $u\in \smash{H_\Delta(\Omega)}$ and, thus, the traces $u|_{\partial \Omega}\in H^1(\partial\Omega)$ and $(\nabla u)|_{\partial\Omega}\in (L^2(\Omega))^d$.
			
			\item[(ii)] If \hspace*{-0.1mm}$\Omega\hspace*{-0.175em}\subseteq\hspace*{-0.175em}\mathbb{R}^2$ \hspace*{-0.1mm}is \hspace*{-0.1mm}polygonal \hspace*{-0.1mm}bounded \hspace*{-0.1mm}Lipschitz \hspace*{-0.1mm}domain~\hspace*{-0.1mm}and~\hspace*{-0.1mm}$s\in (\frac{1}{2},1]$, \hspace*{-0.1mm}then (\textit{cf}.\ \hspace*{-0.1mm}\cite[Thm.\ \hspace*{-0.1mm}1.5.2.1]{Grisvard85}) \hspace*{-0.1mm}$\textup{Tr}\hspace*{-0.175em}\coloneqq\hspace*{-0.175em} (\textup{tr},\textup{tr}\hspace*{-0.1em}\circ\hspace*{-0.1em}\nabla)^\top 	\hspace*{-0.2em}\colon\hspace*{-0.175em} H^{1+s}(\Omega)\hspace*{-0.175em}\to \hspace*{-0.175em} H^{s+\frac{1}{2}}(\partial\Omega)\hspace*{-0.175em}\times\hspace*{-0.175em} (H^{s-\frac{1}{2}}(\partial\Omega))^d$ \hspace*{-0.1mm}is \hspace*{-0.1mm}well-defined, \hspace*{-0.1mm}linear \hspace*{-0.1mm}and \hspace*{-0.1mm}continuous.\vspace*{-0.25mm}
		\end{itemize} 
	\end{remark}
	
	\begin{proof}[Proof (of Theorem \ref{thm:apriori_identity})]
		\emph{ad (i).} First, using \eqref{eq:trace_preservation}, we observe that\vspace*{-1mm}
		\begin{align*}
			\pi_h \Pi_h^{cr}u=\pi_h u\begin{cases}
				=\pi_hu_D=u_D^h&\text{ a.e.\ on }\Gamma_D\,,\\
				\ge \pi_h \chi= \chi_h &\text{ a.e.\ on }\Gamma_C\,,
			\end{cases}\\[-6.5mm]\notag
		\end{align*}
		\textit{i.e.}, it holds that $\Pi_h^{cr}u\in K_h^{cr}$.
		Second, if $z\in (L^p(\Omega))^d$, where $p>2$, according to \cite[Sec.\ 17.1]{EG21I}, $\Pi_h^{rt}z\in \mathcal{R}T^0_N(\mathcal{T}_h)$ is well-defined and
		 using \eqref{eq:div_preservation} and \eqref{eq:normal_trace_preservation}, we observe that\vspace*{-0.75mm}
		\begin{align*}
			\begin{aligned}  
			\textup{div}\,\Pi_h^{rt}z&=\Pi_h \textup{div}\,z=-f_h\qquad\qquad\hspace*{0.5mm}\text{ a.e.\ in }\Omega\,, \\[-1mm]
			\Pi_h^{rt} z\cdot n&=\pi_h(z\cdot n) \begin{cases}
				=\pi_h g=g_h&\text{ a.e.\ on }\Gamma_N\,,\\
				\ge 0 &\text{ a.e.\ on }\Gamma_C\,,
			\end{cases} 
		\end{aligned}\\[-6.5mm]\notag
		\end{align*}
		\textit{i.e.}, it holds that $\Pi_h^{rt}z\in K_h^{rt,*}$. Then, 
		using Theorem \ref{thm:discrete_prager_synge_identity} together with Lemma \ref{lem:discrete_primal_dual_gap_estimator}~and~Lemma~\ref{lem:discrete_strong_convexity_measures} as well as the convex optimality relation \eqref{eq:optimality.2}, \eqref{eq:grad_preservation}, and \eqref{eq:normal_trace_preservation}, we find that\vspace*{-0.5mm}
		\begin{align}\label{thm:apriori_identity.1}
			\begin{aligned} 
			\smash{\rho_{\textup{tot},h}^2}(\Pi_h^{cr}u,\Pi_h^{rt}z)&= \tfrac{1}{2} \|\nabla_h \Pi_h^{cr}u-\Pi_h \Pi_h^{rt}z\|_{\Omega}^2+(\Pi_h^{rt}z\cdot n,\pi_h\Pi_h^{cr}u-\chi_h)_{\Gamma_C}
			\\&= \tfrac{1}{2} \|\Pi_h z-\Pi_h \Pi_h^{rt}z\|_{\Omega}^2+(z\cdot n,\pi_h\Pi_h^{cr}(u-\chi)-(u-\chi))_{\Gamma_C}
			\\&\eqqcolon \smash{I_h^1}+\smash{I_h^2}\,.
			\end{aligned}\\[-6.5mm]\notag
		\end{align} 
		
		\emph{ad (ii).} We need to estimate $\smash{I_h^1}$ and $\smash{I_h^2}$:\enlargethispage{12.5mm}
		
		\emph{ad $I_h^1$.} Using the $L^2$-stability property of $\Pi_h$ and the fractional approximation properties~of~$\Pi_h^{rt}$ (\emph{cf}.\ \eqref{eq:RT-Interpolant-Rate}), we find that\vspace*{-0.5mm}
		\begin{align}\label{thm:apriori_identity.2}
			\begin{aligned} 
			I_h^1\leq \tfrac{1}{2} \|z-\Pi_h^{rt}z\|_{\Omega}^2\leq c\, h^{2s }\,\vert z\vert_{s,\Omega}^2\leq c\, h^{2s }\,\|u\|_{1+s,\Omega}^2\,.
			\end{aligned}\\[-6.5mm]\notag
		\end{align} 
		
		\emph{ad $I_h^2$.}
		Abbreviating $\tilde{u}\coloneqq u-\chi\in \smash{H^{1+s}(\Omega)}$, 
		we find that\vspace*{-0.5mm}
		\begin{align}\label{thm:apriori_identity.3}
			\begin{aligned} 
				(z\cdot n,\pi_h\Pi_h^{cr}\tilde{u}-\tilde{u})_{\Gamma_C} 
				&=(z\cdot n,\Pi_h^{cr}\tilde{u}-\tilde{u})_{\Gamma_C}+(z\cdot n,\pi_h\Pi_h^{cr}\tilde{u}-\Pi_h^{cr}\tilde{u})_{\Gamma_C}
				\\&\eqqcolon \smash{I_h^{2,1}}+\smash{I_h^{2,2}}\,. 
			\end{aligned}\\[-6.5mm]\notag
		\end{align} 
 		
 		\emph{ad $I_h^{2,1}$.} Using that $\Pi_h^{cr}\tilde{u}-\tilde{u}\perp \mathcal{L}^0(\mathcal{S}_h)$ (\textit{cf}.\ \eqref{eq:trace_preservation}), that $\pi_h(z\cdot n)=(\pi_hz)\cdot n$ a.e.\ in $\cup\mathcal{S}_h$, a local trace inequality  (\textit{cf}.\ \cite[Rem.\ 12.19,~(12.17)]{EG21I}), and the fractional approximation properties~of~$\pi_h $ 
 		and~$\Pi_h^{cr}$~(\emph{cf}.~\eqref{eq:CR-Interpolant-Rate}), we obtain\vspace*{-0.5mm} 
 		\begin{align}\label{thm:apriori_identity.4}
 			\begin{aligned} 
 			\smash{I_h^{2,1}}&=(z\cdot n-\pi_h(z\cdot n),\Pi_h^{cr}\tilde{u}-\tilde{u})_{\Gamma_C}
 			\\&\leq \|z-\pi_hz\|_{\Gamma_C}\,(h^{\smash{-\frac{1}{2}}}\,\|\tilde{u}-\Pi_h^{cr}\tilde{u}\|_{\Omega}+h^{\smash{\frac{1}{2}}}\,\|\nabla\tilde{u}-\nabla_h\Pi_h^{cr}\tilde{u}\|_{\Omega})
 			\\&\leq c\, h^{\smash{s -\frac{1}{2}}}\vert z\vert_{\smash{s -\frac{1}{2}},\Gamma_C}\,h^{\smash{s +\frac{1}{2}}}\vert  \tilde{u}\vert_{\smash{s +\frac{1}{2}},\Omega}
 			\\&\leq  c\, h^{\smash{2s}} (\|u\|_{1+s,\Omega}^2+\| \chi\|_{1+s,\Omega}^2)\,.
 			\end{aligned}\\[-6.5mm]\notag
 		\end{align} 
 		
 		\emph{ad $I_h^{2,2}$.} We decompose $I_h^{2,2}$ into local contributions, \textit{i.e.}, we define\vspace*{-1mm}
 		\begin{align}\label{thm:apriori_identity.5}
 			I_h^{2,2}
 			=\sum_{S\in \mathcal{S}_h^{\smash{\Gamma_C}}}{(z\cdot n,\pi_h\Pi_h^{cr}\tilde{u}-\Pi_h^{cr}\tilde{u})_S}
 			\eqqcolon \sum_{S\in \mathcal{S}_h^{\smash{\Gamma_C}}}{I_S^{2,2}}\,.\\[-7mm]\notag
 	\end{align} 
 		Next, we distinguish the cases $\vert S\setminus (\{\tilde{u}>0\}\cap \Gamma_C)\vert=0 $ (\textit{i.e.}, no contact), $\vert S\setminus  (\{\tilde{u}=0\}\cap \Gamma_C)\vert=0$ (\textit{i.e.}, contact), and $\vert S\setminus (\{\tilde{u}=0\}\cap \Gamma_C)\vert>0 $ (\textit{i.e.}, both contact and no contact) (equivalent to $\vert S\setminus(\{\tilde{u}>0\}\cap \Gamma_C)\vert>0$):
 		In doing so, we~use~the~identity\vspace*{-0.5mm}
 		\begin{align}\label{representation}
 			\Pi_h^{cr}\tilde{u}-\pi_h\Pi_h^{cr}\tilde{u}=\nabla_{\!S}\Pi_h^{cr}\tilde{u}\cdot (\textup{id}_{\mathbb{R}^d}-\pi_h \textup{id}_{\mathbb{R}^d})\quad\text{ in }S\,,\\[-6mm]\notag
 		\end{align}
 		where, for each $v\in H^1(S)$, 
 		we denote by   $\nabla_{\!S}v\in (L^2(S))^d$ the tangential gradient, 
 		which,~for~every $v\hspace*{-0.15em}\in \hspace*{-0.15em} H_{\Delta}(T_S)\cap H^{1+s}(T_S)$, where $T_S\hspace*{-0.15em}\in\hspace*{-0.15em}\mathcal{T}_h$ is such that $S\hspace*{-0.15em}\subseteq\hspace*{-0.15em} \partial T_S$, satisfies~\mbox{(\textit{cf}.\ \cite[Rem.\ 12.19,~(12.17)]{EG21I})}\vspace*{-0.5mm}
 		\begin{align}\label{eq:grad_trace}
 			\|\nabla_{\!S} v\|_S\leq \|(\nabla v)|_S\|_S\leq c\,(\smash{h_S^{-\frac{1}{2}}}\|\nabla v\|_{T_S}+h_S^{\smash{s-\frac{1}{2}}}\vert\nabla v\vert_{s,T_S})\,.
 		\end{align}
 		
 		In particular, from \eqref{eq:grad_trace} together with the fractional approximation properties of $\Pi_h^{cr}$~(\emph{cf}.~\eqref{eq:CR-Interpolant-Rate}) and $\vert \nabla\Pi_h^{cr}\tilde{u}\vert_{s,T_S}=0$ since $\nabla\Pi_h^{cr}\tilde{u}|_{T_S}=\textup{const}$, it follows that
 		\begin{align}\label{eq:grad_trace_interpol}
 			\begin{aligned}
 					\|\nabla_{\!S} \tilde{u}-\nabla_{\!S} \Pi_h^{cr}\tilde{u}\|_S&\leq (\smash{h_S^{-\frac{1}{2}}}\| \nabla\tilde{u}- \nabla\Pi_h^{cr}\tilde{u}\|_{T_S}+h_S^{\smash{s-\frac{1}{2}}}\vert \nabla \tilde{u}\vert_{s,T_S})
 					\\&\leq h_S^{\smash{s-\frac{1}{2}}}\vert  \tilde{u}\vert_{\smash{s +\frac{1}{2}},T_S}\,.
 			\end{aligned}
 		\end{align}

 		\emph{ad $\vert S\setminus (\{\tilde{u}>0\}\cap \Gamma_C)\vert=0$ (\textit{i.e.}, no contact).} In this case, due to the convex optimality~relation \eqref{eq:optimality.2}, we have that $z\cdot n=0$ a.e.\ on $S$ and, thus,
 		\begin{align}\label{thm:apriori_identity.6.0}
 			\smash{I_S^{2,2}=0}\,.
 		\end{align}
 		
 		 \emph{ad $\vert S\setminus  (\{\tilde{u}=0\}\cap \Gamma_C)\vert=0$ (\textit{i.e.}, contact).} In this case, we~have~that $\tilde{u}=0$ a.e.\ on $S$, which~implies~that
 		  $\nabla_{\!S} \tilde{u}=0$ a.e.\ on $S$. Therefore,  using that $\pi_h\Pi_h^{cr}\tilde{u}-\Pi_h^{cr}\tilde{u}\perp \mathcal{L}^0(\mathcal{S}_h)$, \eqref{representation}, the fractional approximation properties of $\pi_h $ (\emph{cf}.\ \cite[Rem.\ 18.17]{EG21I}), and \eqref{eq:grad_trace_interpol}, we obtain
 		 	\begin{align}\label{thm:apriori_identity.6}
 		 	\begin{aligned} 
 		 	\smash{I_S^{2,2}}&=(z\cdot n-\pi_h(z\cdot n),\pi_h\Pi_h^{cr}\tilde{u}-\Pi_h^{cr}\tilde{u})_S
 		 	\\&=(z\cdot n-\pi_h(z\cdot n),(\nabla_{\!S} \tilde{u}-\nabla_{\!S} \Pi_h^{cr}\tilde{u})\cdot(\textup{id}_{\mathbb{R}^d}-\pi_h\textup{id}_{\mathbb{R}^d}))_S
 		 	\\&\leq h_S\,\|z-\pi_hz\|_S\|\nabla_{\!S} \tilde{u}-\nabla_{\!S} \Pi_h^{cr}\tilde{u}\|_S 
 		 	\\&\leq c\, h_S\,h_S^{\smash{s -\frac{1}{2}}}\vert z\vert_{\smash{s -\frac{1}{2}},S}\,h_S^{\smash{s-\frac{1}{2}}}\vert  \tilde{u}\vert_{\smash{s +\frac{1}{2}},T_S}
 		 	\\&\leq  c\, h_S^{\smash{2s}}\, (\|u\|_{1+s,T_S}^2+\| \chi\|_{1+s,T_S}^2)\,.
 		 		\end{aligned}
 	 \end{align} 
 		 
 		 \emph{ad $\vert S\setminus (\{\tilde{u}\hspace*{-0.1em}=\hspace*{-0.1em}0\}\cap \Gamma_C)\vert\hspace*{-0.1em}>\hspace*{-0.1em}0$ (\textit{i.e.}, both contact and no contact).}
 		 On the one hand, we have that $\tilde{u}=0$ a.e.\ on $S\cap \{\tilde{u}=0\}$, which implies that $\nabla_{\!S} \tilde{u}=0$ a.e.\ on $S\cap \{\tilde{u}=0\}$. Using that $\pi_h\Pi_h^{cr}\tilde{u}-\Pi_h^{cr}\tilde{u}\perp \mathcal{L}^0(\mathcal{S}_h)$, \eqref{representation}, \cite[Lem.~8.2.3]{DHHR11}, the fractional approximation properties of $\pi_h $, that $ \pi_h \nabla_{\!S}\Pi_h^{cr} \tilde{u}= \nabla_{\!S}\Pi_h^{cr} \tilde{u}$, the $L^2$-stability of $\pi_h $, and \eqref{eq:grad_trace_interpol}, we obtain\enlargethispage{10mm}
 		 	\begin{align}\label{thm:apriori_identity.7}
 		 	\begin{aligned} 
 		 I_S^{2,2}&=(z\cdot n-\pi_h(z\cdot n),(\langle\nabla_{\!S} \tilde{u}\rangle_{S\cap \{\tilde{u}=0\}}
 		 -\nabla_{\!S} \Pi_h^{cr}\tilde{u})\cdot(\textup{id}_{\mathbb{R}^d}-\pi_h\textup{id}_{\mathbb{R}^d}))_S
 		 \\&\leq \|z-\pi_hz\|_S\,h_S\,(\|\nabla_{\!S} \tilde{u}-\nabla_{\!S} \Pi_h^{cr}\tilde{u}\|_S+\|\langle\nabla_{\!S} \tilde{u}\rangle_{S\cap \{\tilde{u}=0\}}-\nabla_{\!S} \tilde{u}\|_S)
 		 \\&\leq c\,\|z-\pi_hz\|_S\,h_S\,\big(\|\nabla_{\!S} \tilde{u}-\nabla_{\!S} \Pi_h^{cr}\tilde{u}\|_S+\smash{\tfrac{\vert S\vert}{\vert S\cap \{\tilde{u}=0\} \vert}}\|\pi_h(\nabla_{\!S} \tilde{u})-\nabla_{\!S} \tilde{u}\|_S\big)
 		 \\&\leq c\,\|z-\pi_hz\|_S\,h_S\,\big(\|\nabla_{\!S} \tilde{u}-\nabla_{\!S} \Pi_h^{cr}\tilde{u}\|_S\\&\quad+\smash{\tfrac{\vert S\vert}{\vert S\cap \{\tilde{u}=0\} \vert}}(\|\pi_h(\nabla_{\!S} \tilde{u}-\nabla_{\!S}\Pi_h^{cr} \tilde{u})\|_S+\|\nabla_{\!S} \tilde{u}-\nabla_{\!S}\Pi_h^{cr} \tilde{u}\|_S)\big)
 		 \\&\leq  c\,\tfrac{\vert S\vert }{\vert S\cap \{\tilde{u}=0\}\vert }\,\smash{h_S^{\smash{2s}}}\,(\|u\|_{1+s,T_S}^2+\|\chi\|_{1+s,T_S}^2)\,.
 			\end{aligned}
 	\end{align} 
 		 On the other hand, we have that $z\cdot n=0$ a.e.\ in $S\cap \{\tilde{u}>0\}$. Using that ${\pi_h\Pi_h^{cr}\tilde{u}-\Pi_h^{cr}\tilde{u}\perp \mathcal{L}^0(\mathcal{S}_h)}$, that $\langle z\cdot n\rangle_{S\cap \{\tilde{u}>0\}}=\langle z\rangle_{S\cap \{\tilde{u}>0\}}\cdot n$ a.e.\ in $S$,  \eqref{representation}, \cite[Lem.\ 8.2.3]{DHHR11},  the fractional approximation properties of $\pi_h $, and \eqref{eq:grad_trace_interpol}, we~obtain\vspace*{-0.5mm}
 		 	\begin{align}\label{thm:apriori_identity.8}
 		 	\begin{aligned} 
 		 	I_S^{2,2}&=(z\cdot n,(\nabla_{\!S} \tilde{u}-\nabla_{\!S} \Pi_h^{cr}\tilde{u})\cdot(\textup{id}_{\mathbb{R}^d}-\pi_h\textup{id}_{\mathbb{R}^d}))_S
 		 	\\&=(z\cdot n-\langle z\cdot n\rangle_{S\cap \{\tilde{u}>0\}},(\nabla_{\!S} \tilde{u}-\nabla_{\!S} \Pi_h^{cr}\tilde{u})\cdot(\textup{id}_{\mathbb{R}^d}-\pi_h\textup{id}_{\mathbb{R}^d}))_S 
 		 	\\&\leq  \|z-\langle z\rangle_{S\cap \{\tilde{u}>0\}}\|_S\,h_S\,\|\nabla_{\!S} \tilde{u}-\nabla_{\!S} \Pi_h^{cr}\tilde{u}\|_S
 		 	\\&\leq  c\,\smash{\tfrac{\vert S\vert }{\vert S\cap \{\tilde{u}>0\}\vert }}\, \|z-\pi_hz\|_S\,h_S\,\|\nabla_{\!S} \tilde{u}-\nabla_{\!S} \Pi_h^{cr}\tilde{u}\|_S
 		 		\\&\leq  c\,\smash{\tfrac{\vert S\vert }{\vert S\cap \{\tilde{u}>0\}\vert }}\,\smash{h_S^{\smash{2s}}}\,(\|u\|_{1+s,T_S}^2+\|\chi\|_{1+s,T_S}^2)\,.
 			\end{aligned}
 	\end{align} 
 		Since $S=(S\cap \{\tilde{u}>0\})\dot{\cup}(S\cap \{\tilde{u}=0\})$, we have that $\vert S\cap \{\tilde{u}>0\}\vert \ge \frac{1}{2}\vert S\vert$~or~${\vert S\cap \{\tilde{u}=0\}\vert \ge \frac{1}{2}\vert S\vert}$.
 		Using in the first case \eqref{thm:apriori_identity.7} and in the second case \eqref{thm:apriori_identity.8}, we arrive at 
 		 \begin{align}\label{thm:apriori_identity.9}
 		 	\smash{I_S^{2,2} \leq c\, h_S^{\smash{2s}}\,(\|u\|_{1+s,T_S}^2+\|\chi\|_{1+s,T_S}^2)}\,.
 		 \end{align}
 		 Combining \eqref{thm:apriori_identity.6.0}, \eqref{thm:apriori_identity.6}, and \eqref{thm:apriori_identity.9} in \eqref{thm:apriori_identity.5}, we deduce that
 		 \begin{align}\label{thm:apriori_identity.10}
 		 	\smash{I_h^{2,2}\leq c\, h^{\smash{2s}}\,(\|u\|_{1+s,\Omega}^2+\|\chi\|_{1+s,\Omega}^2)}\,.
 		 \end{align}
 		 Using \eqref{thm:apriori_identity.4} and \eqref{thm:apriori_identity.10} in \eqref{thm:apriori_identity.3}, we infer that
 		 \begin{align}\label{thm:apriori_identity.11}
 		 	\smash{I_h^2\leq c\, h^{\smash{2s}}\,(\|u\|_{1+s,\Omega}^2+\|\chi\|_{1+s,\Omega}^2)}\,.
 		 \end{align}
 		 Eventually, using \eqref{thm:apriori_identity.11} and \eqref{thm:apriori_identity.2} in \eqref{thm:apriori_identity.1}, we conclude the claimed \textit{a priori} error estimate.
	\end{proof}\newpage

	\section{Numerical experiments}\label{sec:experiments}\enlargethispage{5mm}
	
	\qquad In this section, we review the theoretical findings of Section \ref{sec:aposteriori} and Section \ref{sec:apriori}  via numerical experiments.
	All experiments were carried out using the finite element software package~\mbox{\textsf{FEniCS}} (version 2019.1.0, \emph{cf}.\ \cite{fenics}). 
	All graphics are created using the \textsf{Matplotlib}  \mbox{library}~(version~3.5.1,~\emph{cf}.~\cite{Hunter07}).
	
	\subsection{Implementation details} 
	
	\hspace{5mm}We compute the discrete primal solution ${u_h^{cr}\in \smash{\mathcal{S}^{1,\emph{cr}}_D(\mathcal{T}_h)}}$ and the associated discrete Lagrange multiplier $\smash{\overline{\lambda}}_h^{cr}\in \mathcal{L}^0(\mathcal{S}_h^{\Gamma_C}) $ jointly satisfying the discrete augmented problem~\eqref{eq:discrete_augmented_problem}~via 
	the primal-dual active set strategy interpreted as a semi-smooth Newton method.~For~sake~of~completeness, in the case $u_D\hspace*{-0.1em}=\hspace*{-0.1em}0$, we will~briefly~outline~important implementation details related~with~this~strategy. 
	
	We fix an ordering of the sides ${(S_i)_{i=1,\dots, N_h^{cr}}}$ and an ordering of the~\hspace{-0.15mm}elements~\hspace{-0.15mm}${(T_i)_{i=1,\dots, N_h^0}}$, where $N_h^{cr}\coloneqq \textup{card}(\mathcal{S}_h\setminus\mathcal{S}_h^{\Gamma_D})$, $N_h^{\smash{cr,C}}=\textrm{card}(\mathcal{S}_h^{\Gamma_C})$ and $N_h^0\coloneqq \textup{card}(\mathcal{T}_h)$ such that\footnote{In practice, the element $\widehat{T}\in \mathcal{T}_h$ for which $\mathbb{R}\chi_{\widehat{T}}\perp\smash{\Pi_h(\mathcal{S}^{1,\emph{cr}}_D(\mathcal{T}_h))}$ is found via searching and erasing~a~zero~column (if existent) in the matrix $((\Pi_ h\varphi_{S_i},\chi_T)_{\Omega})_{i=1,\dots,N_h^{cr},T\in \mathcal{T}_h}\in \smash{\mathbb{R}^{N_h^{cr}\times N_h^0}}$ leading to $\mathrm{P}_h^{cr,0}\in \smash{\mathbb{R}^{N_h^{cr}\times N_h^{\smash{cr,0}}}}$.} 
	\begin{align*}
		\textup{span}(\{\varphi_{S_i}\mid i= 1,\dots, N_h^{\smash{cr}}\})&=\mathcal{S}^{1,cr}_D(\mathcal{T}_h)\,,\\
			\textup{span}(\{\chi_{S_i}\mid i= 1,\dots, N_h^{\smash{cr,C}}\})&=\mathcal{L}^0(\mathcal{S}_h^{\Gamma_C})\,,\\
		\textup{span}(\{\chi_{T_i}\mid i= 1,\dots, N_h^{\smash{cr,0}}\})&=\Pi_h(\mathcal{S}^{1,cr}_D(\mathcal{T}_h))\,,
	\end{align*}
	where $N_h^{\smash{cr,0}}=\textup{dim}(\Pi_h(\mathcal{S}^{1,cr}_D(\mathcal{T}_h)))\in \{N_h^0,N_h^0-1\}$ because of ${\textup{codim}_{\mathcal{L}^0(\mathcal{T}_h)}(\Pi_h(\mathcal{S}^{1,cr}_D(\mathcal{T}_h)))\in\{0,1\}}$ (\emph{cf}.\ \cite[Cor.\ 3.2]{BW21}).
	Then, if we define the matrices
	\begin{align*}
		\mathrm{S}_h^{cr}&\coloneqq((\nabla_h\varphi_{S_i},\nabla_h\varphi_{S_j})_{\Omega})_{i,j=1,\dots,N_h^{cr}}\in \mathbb{R}^{N_h^{cr}\times N_h^{cr}}\,,\\
		\mathrm{P}_h^{cr,0}&\coloneqq((\Pi_ h\varphi_{S_i},\chi_{T_j})_{\Omega})_{i=1,\dots,N_h^{cr},j=1,\dots,N_h^{\smash{cr,0}}}\in \mathbb{R}^{N_h^{cr}\times N_h^{\smash{cr,0}}}\,,\\
		\mathrm{p}_h^{cr,C}&\coloneqq((\Pi_ h\varphi_{S_i},\chi_{S_j})_{\Gamma_C})_{i=1,\dots,N_h^{cr},j=1,\dots,N_h^{\smash{cr,C}}}\in \mathbb{R}^{N_h^{cr}\times N_h^{\smash{cr,C}}}\,,
		\intertext{and, assuming for the entire section that $\chi_h\coloneqq \pi_h\Pi_h^{cr} \chi\in \mathcal{L}^0(\mathcal{S}_h)$, the vectors}
		\mathrm{X}_h^{cr}&\coloneqq ((\chi_h,\chi_{S_i})_{\Gamma_C})_{i=1,\dots,N_h^{cr}}\in \mathbb{R}^{ N_h^{\smash{cr}}}\,,\\
		\mathrm{F}_h^0&\coloneqq((f_h,\chi_{T_i})_{\Omega})_{i=1,\dots,N_h^{\smash{cr,0}}}\in \mathbb{R}^{ N_h^{\smash{cr,0}}}\,,\\
		\mathrm{G}_h^{cr}&\coloneqq((g_h,\chi_{S_i})_{\Gamma_N})_{i=1,\dots,N_h^{\smash{cr}}}\in \mathbb{R}^{ N_h^{\smash{cr}}}\,,
	\end{align*}
	the same argumentation as in \cite[Lem.\ 5.3]{Bartels15} shows that  the discrete augmented problem \eqref{eq:discrete_augmented_problem} is equivalent to finding vectors $(\mathrm{U}_h^{cr},\smash{\overline{\Lambda}}_h^{cr})^\top \in \mathbb{R}^{N_h^{cr}}\times \mathbb{R}^{N_h^{\smash{cr,C}}}$ such that
	\begin{align}
		\begin{aligned}\mathrm{S}_h^{cr}\mathrm{U}_h^{cr}+\mathrm{p}_h^{cr,C}\smash{\overline{\Lambda}}_h^{cr}&=\mathrm{P}_h^{cr,0}\mathrm{F}_h^0+\mathrm{G}_h^{cr}&&\quad \text{ in }\mathbb{R}^{N_h^{cr}}\,,\\
			\mathscr{C}_h(\mathrm{U}_h^{cr},\smash{\overline{\Lambda}}_h^{cr})&=0_{\smash{\tiny\mathbb{R}^{N_h^{\smash{cr,C}}}}}&&\quad \text{ in }\mathbb{R}^{N_h^{\smash{cr,C}}}\,,
		\end{aligned}\label{eq:details.1}
	\end{align}
	where for given $\alpha>0$, the mapping $\mathscr{C}_h\colon \mathbb{R}^{N_h^{cr}}\!\times \mathbb{R}^{N_h^{\smash{cr,C}}}\!\!\to\!\mathbb{R}^{N_h^{\smash{cr,C}}}$ for every ${(\mathrm{U}_h,\smash{\overline{\Lambda}}_h)^\top\!\!\in\! \mathbb{R}^{N_h^{cr}}\!\!\times\! \mathbb{R}^{N_h^{\smash{cr,C}}}}$ is defined by\footnote{Here, for   $a=(a_i)_{i=1,\ldots,n},b=(b_i)_{i=1,\ldots,n }\in \mathbb{R}^n$, $n\in \mathbb{N}$, we define $\min\{a,b\}=(\min\{a_i,b_i\})_{i=1,\ldots,n}\in \mathbb{R}^n$.}
	\begin{align*}
		\mathscr{C}_h(\mathrm{U}_h,\smash{\overline{\Lambda}}_h)\coloneqq \smash{\overline{\Lambda}}_h-\min\big\{0_{\smash{\tiny\mathbb{R}^{N_h^{\smash{cr,C}}}}},\smash{\overline{\Lambda}}_h+\alpha\,(\mathrm{p}_h^{cr,0})^{\top}(\mathrm{U}_h-\mathrm{X}_h^{cr})\big\}\quad\text{ in }\mathbb{R}^{N_h^{\smash{cr,C}}}\,.
	\end{align*}
	More precisely, the discrete primal solution $u_h^{cr}\in \smash{\mathcal{S}^{1,\emph{cr}}_D(\mathcal{T}_h)}$ and the associated discrete Lagrange multiplier ${\smash{\overline{\lambda}}_h^{cr}\in \mathcal{L}^0(\mathcal{S}_h^{\Gamma_C})}$ jointly satisfying the discrete augmented problem \eqref{eq:discrete_augmented_problem}~as~well~as the vectors  $(\mathrm{U}_h^{cr},\smash{\overline{\Lambda}}_h^{cr})^\top \in \mathbb{R}^{N_h^{cr}}\times \mathbb{R}^{N_h^{\smash{cr,C}}}$ satisfying \eqref{eq:details.1}, respectively, are related by\footnote{Here, for each $i=1,\dots,N$, $N\in \mathbb{N}$, we denote by $\mathrm{e}_i=(\delta_{ij})_{j=1,\dots,N}\in  \mathbb{R}^N$,~the~\mbox{$i$-th}~unit~\mbox{vector}.}
	\begin{align*}
		\begin{aligned}
			u_h^{cr}
			=\sum_{i=1}^{N_h^{cr}}{(\mathrm{U}_h^{cr}\cdot \mathrm{e}_i)\varphi_{S_i}}\in \mathcal{S}^{1,\emph{cr}}_D(\mathcal{T}_h)\,,\qquad
			\smash{\overline{\lambda}}_h^{cr}
			=\sum_{i=1}^{N_h^{cr,C}}{(\smash{\overline{\Lambda}}_h^{cr}\cdot \mathrm{e}_i)\chi_{S_i}}\in \mathcal{L}^0(\mathcal{S}_h^{\Gamma_C})\,.
		\end{aligned}
	\end{align*}
	Next, define the mapping $\mathscr{F}_h\colon \!\mathbb{R}^{N_h^{cr}}\!\times \mathbb{R}^{N_h^{\smash{cr,C}}}\!\!\to \!\mathbb{R}^{N_h^{cr}}\!\times \mathbb{R}^{N_h^{\smash{cr,C}}}\!$ for every ${(\mathrm{U}_h,\smash{\overline{\Lambda}}_h)^\top\!\!\in \!\mathbb{R}^{N_h^{cr}}\!\times \!\mathbb{R}^{N_h^{\smash{cr,C}}}}\!$~\hspace{-0.15mm}by
	\begin{align*}
		\mathscr{F}_h(\mathrm{U}_h,\smash{\overline{\Lambda}}_h)\coloneqq \bigg[\begin{array}{c}
			\mathrm{S}_h^{cr}\mathrm{U}_h+\mathrm{p}_h^{cr,C}\smash{\overline{\Lambda}}_h- \mathrm{P}_h^{cr,0}\mathrm{F}_h^0-\mathrm{G}_h^{cr}\\
			\mathscr{C}_h(\mathrm{U}_h,\smash{\overline{\Lambda}}_h)
		\end{array}\bigg]\quad \text{ in }\mathbb{R}^{N_h^{cr}}\times \mathbb{R}^{N_h^{\smash{cr,C}}}\,.
	\end{align*}
	Then, the non-linear system \eqref{eq:details.1} is equivalent to finding $(\mathrm{U}_h^{cr},\smash{\overline{\Lambda}}_h^{cr})^\top \in \mathbb{R}^{N_h^{cr}}\times \mathbb{R}^{N_h^{\smash{cr,C}}}$ such that 
	\begin{align*}
		\mathscr{F}_h(\mathrm{U}_h^{cr},\smash{\overline{\Lambda}}_h^{cr})=0_{\smash{\tiny\mathbb{R}^{N_h^{cr}}\times \mathbb{R}^{N_h^{\smash{cr,C}}}}}\quad\text{ in }\mathbb{R}^{N_h^{cr}}\times \mathbb{R}^{N_h^{\smash{cr,C}}}\,.
	\end{align*} 
	By analogy with \cite[Thm.\ 5.11]{Bartels15}, one finds that the mapping $\mathscr{F}_h\colon \mathbb{R}^{N_h^{cr}}\times \mathbb{R}^{N_h^{\smash{cr,C}}}\to \mathbb{R}^{N_h^{cr}}\times \mathbb{R}^{N_h^{\smash{cr,C}}}$ is Newton-differentiable at every $(\mathrm{U}_h,\smash{\overline{\Lambda}}_h)^\top\in \mathbb{R}^{N_h^{cr}}\times \mathbb{R}^{N_h^{\smash{cr,C}}}$ and with the (active) set
	\begin{align*}
		\mathscr{A}_h\coloneqq \mathscr{A}_h(\mathrm{U}_h,\smash{\overline{\Lambda}}_h)\coloneqq\big\{i\in \{1,\dots,N_h^{\smash{cr,C}}\}\mid (\smash{\overline{\Lambda}}_h+\alpha(\mathrm{p}_h^{cr,C})^{\top}(\mathrm{U}_h-\mathrm{X}_h^{cr}))\cdot \mathrm{e}_i<0\big\}\,,
	\end{align*}
	for every $\smash{(\mathrm{U}_h,\smash{\overline{\Lambda}}_h)^\top\in \mathbb{R}^{N_h^{cr}}\times \mathbb{R}^{N_h^{\smash{cr,C}}}}$, we have that
	\begin{align*}
		\mathrm{D} \mathscr{F}_h(\mathrm{U}_h,\smash{\overline{\Lambda}}_h)\coloneqq \bigg[\begin{array}{cc}
			\mathrm{S}_h^{cr} & \mathrm{p}_h^{cr,C} \\
			 \mathrm{I}_{\mathscr{A}_h} (\mathrm{p}_h^{cr,C})^\top & \mathrm{I}_{\mathscr{A}^c_h}
		\end{array}\bigg]\quad\text{ in }\mathbb{R}^{N_h^{cr}+N_h^{\smash{cr,C}}}\times \mathbb{R}^{N_h^{cr}+N_h^{\smash{cr,C}}}\,,
	\end{align*}
	where $\mathrm{I}_{\mathscr{A}_h},\mathrm{I}_{\mathscr{A}^c_h}\coloneqq \mathrm{I}_{N_h^{\smash{cr,C}}\times N_h^{\smash{cr,C}}}-\mathrm{I}_{\mathscr{A}_h}\in \mathbb{R}^{N_h^{\smash{cr,C}}}\times \mathbb{R}^{N_h^{\smash{cr,C}}}$ for every $i,j\in \{1,\dots,N_h^{\smash{cr,0}}\}$ are~defined~by
	$(\mathrm{I}_{\mathscr{A}_h})_{ij}\coloneqq 1$ if $i=j\in \mathscr{A}_h$ and $(\mathrm{I}_{\mathscr{A}_h})_{ij}\coloneqq 0$ else.
	
	For a given iterate $(\mathrm{U}_h^{k-1},\smash{\overline{\Lambda}}_h^{k-1})^\top\!\in \hspace*{-0.1em}\mathbb{R}^{N_h^{cr}} \hspace*{-0.1em}\times \hspace*{-0.1em} \mathbb{R}^{N_h^{\smash{cr,C}}}$, 
	one step of the~semi-smooth~\mbox{Newton}~method  determines a direction $(\delta\mathrm{U}_h^{k-1},\delta\smash{\overline{\Lambda}}_h^{k-1})^\top\in \mathbb{R}^{N_h^{cr}}\times \mathbb{R}^{N_h^{\smash{cr,C}}}$~such~that
	\begin{align}
		\mathrm{D} \mathscr{F}_h(\mathrm{U}_h^{k-1},\smash{\overline{\Lambda}}_h^{k-1})(\delta\mathrm{U}_h^{k-1},\delta\smash{\overline{\Lambda}}_h^{k-1})^\top=-\mathscr{F}_h(\mathrm{U}_h^{k-1},\smash{\overline{\Lambda}}_h^{k-1})\quad\text{ in }\mathbb{R}^{N_h^{cr}}\times \mathbb{R}^{N_h^{\smash{cr,C}}}\,.\label{eq:details.3}
	\end{align}
	Setting the update
	$(\mathrm{U}_h^k,\smash{\overline{\Lambda}}_h^k)^\top\coloneqq(\mathrm{U}_h^{k-1}+\delta\mathrm{U}_h^{k-1},\smash{\overline{\Lambda}}_h^{k-1}+\delta\smash{\overline{\Lambda}}_h^{k-1})^\top\in \mathbb{R}^{N_h^{cr}}\times \mathbb{R}^{N_h^{\smash{cr,C}}} $ and the~active set $\mathscr{A}_h^{k-1}\coloneqq \mathscr{A}_h(\mathrm{U}_h^{k-1},\smash{\overline{\Lambda}}_h^{k-1})$, the 
	linear system~\eqref{eq:details.3} can equivalently be re-written as
	\begin{align}\label{eq:equiv_equ}
		\begin{aligned}
			\mathrm{S}_h^{cr} \mathrm{U}_h^k+\mathrm{p}_h^{cr,C}\smash{\overline{\Lambda}}_h^k&=\mathrm{P}_h^{cr,0}\mathrm{F}_h^0&&\quad\text{ in }\mathbb{R}^{N_h^{cr}}\,,\\
			\mathrm{I}_{\smash{(\mathscr{A}_h^{k-1})^c}}\smash{\overline{\Lambda}}_h^k&=0_{\smash{\tiny \mathbb{R}^{N_h^{\smash{cr,0}}}}}&&\quad\text{ in }\mathbb{R}^{N_h^{\smash{cr,C}}}\,,\\
			\mathrm{I}_{\smash{\mathscr{A}_h^{k-1}}}(\mathrm{p}_h^{cr,C})^{\top}\mathrm{U}_h^k&= \mathrm{I}_{\smash{\mathscr{A}_h^{k-1}}}(\mathrm{p}_h^{cr,C})^{\top}\mathrm{X}_h^{cr}&& \quad\text{ in }\mathbb{R}^{N_h^{\smash{cr,C}}}\,.
		\end{aligned}
	\end{align}
	\hspace*{5mm}The semi-smooth Newton method  \eqref{eq:details.3} can, thus, equivalently be formulated in the following form, which is a version of a primal-dual active set strategy.\enlargethispage{5mm}

	\begin{algorithm}[primal-dual active set strategy]\label{alg:semi-smooth} Choose parameters $\alpha>0$ and $\varepsilon_{\textup{STOP}}>0$. Moreover, let
		$(\mathrm{U}_h^0,\smash{\overline{\Lambda}}_h^0)^\top\in \mathbb{R}^{N_h^{cr}}\times \mathbb{R}^{N_h^{\smash{cr,C}}}$ be an initial guess and set $k=1$. Then,~for~every~$k\in \mathbb{N}$:
		\begin{itemize}[noitemsep,topsep=2pt,leftmargin=!,labelwidth=\widthof{(iii)},font=\itshape]
			\item[(i)] Define the most recent active set
			$$\mathscr{A}^{k-1}_h\coloneqq \mathscr{A}_h(\mathrm{U}_h^{k-1},\smash{\overline{\Lambda}}_h^{k-1})\coloneqq \big\{i\in \{1,\dots,N_h^{\smash{cr,C}}\}\mid (\smash{\overline{\Lambda}}_h^{k-1}+\alpha(\mathrm{p}_h^{cr,C})^{\top}(\mathrm{U}_h^{k-1}-\mathrm{X}_h^{cr}))\cdot \mathrm{e}_i<0\big\}\,.$$
			
			\item[(ii)] Compute the iterate $(\mathrm{U}_h^k,\smash{\overline{\Lambda}}_h^k)^\top\in \mathbb{R}^{N_h^{cr}}\times \mathbb{R}^{N_h^{\smash{cr,C}}}$ such that
			\begin{align*}
				\bigg[\begin{array}{cc}
					\mathrm{S}_h^{cr} & \mathrm{p}_h^{cr,C} \\
					 \mathrm{I}_{\smash{\mathscr{A}_h^{k-1}}}(\mathrm{p}_h^{cr,C})^\top & \mathrm{I}_{\smash{(\mathscr{A}_h^{k-1})^c}}
				\end{array}\bigg]
				\bigg[\begin{array}{c}
					\mathrm{U}_h^k \\
					\smash{\overline{\Lambda}}_h^k
				\end{array}\bigg]=\bigg[\begin{array}{c}
					\mathrm{P}_h^{cr,0}\mathrm{F}_h^0\\
					\mathrm{I}_{\smash{\mathscr{A}^{k-1}_h}}(\mathrm{p}_h^{cr,C})^{\top}\mathrm{X}_h^{cr}
				\end{array}\bigg]\,.
			\end{align*}
			
			\item[(iii)] Stop if $\vert \mathrm{U}_h^k-\mathrm{U}_h^{k-1}\vert \leq \varepsilon_{\textup{STOP}}$; otherwise, increase $k\to k+1$ and continue with step (i).
		\end{itemize}
		
	\end{algorithm}
	
	\begin{remark}[Important implementation details]
		\begin{itemize}[noitemsep,topsep=2pt,leftmargin=!,labelwidth=\widthof{(iii)},font=\itshape]
			\item[(i)] Algorithm \ref{alg:semi-smooth} converges~\mbox{super-linearly} if $(\mathrm{U}_h^0,\smash{\overline{\Lambda}}_h^0)^\top\in \mathbb{R}^{N_h^{cr}}\times \mathbb{R}^{N_h^{\smash{cr,C}}}$ is sufficiently close to the solution ${(\mathrm{U}_h^{cr},\smash{\overline{\Lambda}}_h^{cr})^\top\in \mathbb{R}^{N_h^{cr}}\times \mathbb{R}^{N_h^{\smash{cr,C}}}}$. As the Newton-differentiability only holds in finite-dimensional~situations and~\mbox{deteriorates}~as $N_h^{cr}+N_h^{\smash{cr,C}}\hspace*{-0.175em}\to\hspace*{-0.175em} \infty$, the condition on the initial guess becomes more critical for ${N_h^{cr}\hspace*{-0.175em} +\hspace*{-0.175em} N_h^{\smash{cr,C}}\hspace*{-0.2em}\to\hspace*{-0.2em} \infty}$.
			\item[(ii)] The degrees of freedom related to the entries $\smash{\overline{\Lambda}}_h^k|_{\smash{(\mathscr{A}^{k-1}_h)^c}}$ can be eliminated from the linear system of equations in Algorithm \ref{alg:semi-smooth}, step (ii) (see also \eqref{eq:equiv_equ}$_2$).
			\item[(iii)] Since only a finite number of active sets are possible, the algorithm terminates within a finite number of iterations at the exact solution $(\mathrm{U}_h^{cr},\smash{\overline{\Lambda}}_h^{cr})^\top \in \mathbb{R}^{N_h^{cr}}\times \mathbb{R}^{N_h^{\smash{cr,C}}}$.~For~this~reason, in practice, the stopping criterion in step (iii) is reached with $\vert \mathrm{U}_h^{k^*}-\mathrm{U}_h^{k^*-1}\vert =0$~for~some~${k^*\in \mathbb{N}}$, in which case, one has that $\mathrm{U}_h^{k^*}=\mathrm{U}_h^{cr}$, provided $\varepsilon_{\textup{STOP}}>0$ is sufficiently small.
		\end{itemize}
	\end{remark}\newpage
	
	\subsection{Numerical experiments concerning the a priori error analysis}\label{subsec:num_a_priori}
	
	\hspace{5.5mm}In this subsection, we review the theoretical findings of Section \ref{sec:apriori}.\enlargethispage{11mm}

	For  our numerical experiments, we choose a setup from \cite[Sec.\ 7]{SWW16}, \cite[§6]{CH20}, or \cite[Sec.\ 5.1]{AP24_arxiv}.
	More precisely, let $\Omega\coloneqq (0,1)^2$,  $	\Gamma_C\coloneqq
	(0,1)\times \{0\}$,  $\Gamma_D\coloneqq \partial\Omega \setminus \Gamma_C$ (\textit{cf}.\ Figure \ref{fig:apriori}(\textit{left})), 
	\textit{i.e.}, $\Gamma_N\coloneqq\emptyset$, and $ \chi  \coloneqq 0\in H^2(\Omega)$. Then, we compute  $f\in L^2(\Omega)$~such~that~the primal solution~$u\in K $, in~polar coordinates centered at $(0.5, 0)^\top\in \Gamma_C$, \textit{i.e.}, for every $x=(x_1,x_2)^\top\in \Omega$,~setting\vspace*{-0.5mm}
	\begin{align*}
	 	r(x)\coloneqq \smash{\big((x_1-\tfrac{1}{2})^2+x_2^2\big)^{\frac{1}{2}}}	\,,\qquad 
		\theta(x)\coloneqq \smash{\arccos\big(\tfrac{x_1-\frac{1}{2}}{r(x)}\big)}\,,\\[-6mm]
	\end{align*}
	 for every $ x\in \Omega$, is defined by\vspace*{-1mm}
	\begin{align*}
		u(x)\coloneqq- \smash{10\,\psi(r(x))\,r(x)^{\frac{3}{2}}\sin(\tfrac{3}{2}\theta(x))}\,.\\[-6mm]
	\end{align*}
	Here, 
	 $\psi\colon [0,\infty)\to \mathbb{R}$ (\textit{cf}.\ Figure \ref{fig:apriori}(\textit{right})) is the zero extension of a ninth-order spline with respect to the single element partition of $[0,0.45]$ which satisfies $\psi(r)>0$ for all $r\in (0.05,0.045)$, $\psi(r)=0$ for all $r\in[0.45,\infty)$,~and\vspace*{-0.5mm} 
	\begin{align*}
			1-\psi(0)=\psi(0.45)=\psi^{i}(0)=\psi^{i}(0.45)=0\quad\text{ for all }i=1,\ldots,4\,.\\[-6mm]
	\end{align*}
	
	\begin{figure}[H]\vspace*{-4mm}
		\centering

\tikzset{every picture/.style={line width=0.5pt}} 

\hspace*{5mm}
		\includegraphics[width=8.75cm]{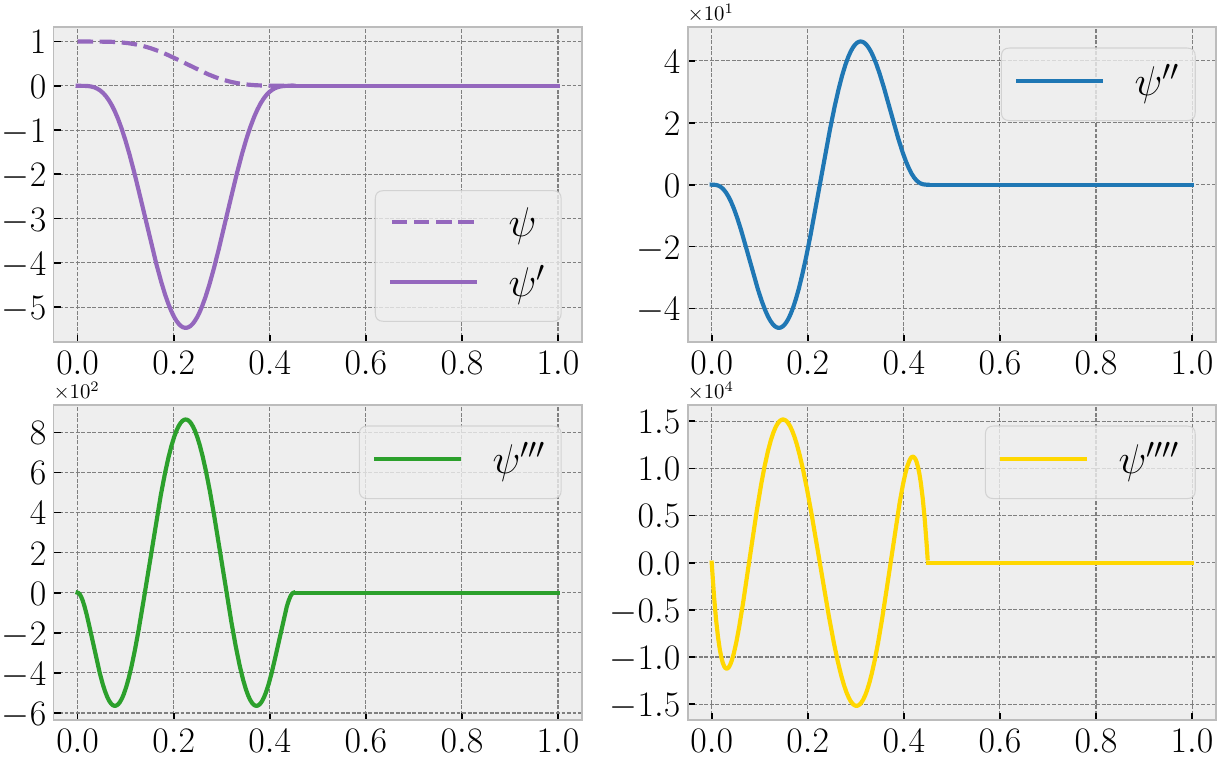}\vspace*{-1mm}
		\caption{\textit{left}:  $\Omega$, $\Gamma_C$, $\Gamma_D$, $\{u>0\}\cap \Gamma_C=\{z\cdot n=0\}\cap \Gamma_C$, and $\{u=0\}\cap \Gamma_C=\{z\cdot n>0\}\cap \Gamma_C$; \textit{right}: $\psi, \psi^{\prime},\psi^{\prime\prime}, \psi^{\prime\prime\prime}, \psi^{\prime\prime\prime\prime}\colon [0,1]\to\mathbb{R}$.}\label{fig:apriori}
	\end{figure}\vspace*{-3mm}
	In~this~example, we have that
	$u\in H^2(\Omega)$, so that Theorem \ref{thm:apriori_identity}(ii) suggests an experimental
	convergence rate of about $\mathcal{O}(h_k^2)= \mathcal{O}(N_k)$, where $N_k\coloneqq\textup{dim}(\mathcal{S}^{1,cr}_D(\mathcal{T}_{h_k}))+\textup{dim}(\mathcal{L}^0(\mathcal{S}_{h_k}^{\Gamma_C}))$,~${k\in \mathbb{N}}$, for the discrete primal-dual total errors (\textit{cf}.\ \eqref{eq:discrete_primal_dual_error}), which are equal to the discrete primal-dual~gap estimators (\textit{cf}.\ \eqref{eq:discrete_primal_dual_gap_estimator}), \textit{i.e.}, we expect (\textit{cf}.\ \mbox{Theorem} \ref{thm:apriori_identity}(i))\vspace*{-0.5mm}
	\begin{align*}
		\rho_{\textup{tot},h_k}^2(\Pi_{h_k}^{cr}u,\Pi_{h_k}^{rt} z)=	\rho_{\textup{gap},h_k}^2(\Pi_{h_k}^{cr}u,\Pi_{h_k}^{rt} z)=\mathcal{O}(h_k^2)=\mathcal{O}(N_k)\,.\\[-6mm]
	\end{align*}

	An initial triangulation $\mathcal
	T_{h_0}$, $h_0=\smash{\sqrt{2}}$, is constructed by subdividing the unit square $\Omega $ along its diagonal from $(0,0)^\top$ to $(1,1)^\top$ into two triangles.
	Refined     triangulations~$\mathcal T_{h_k}$,~${k=1,\dots,7}$, where $h_{k+1}=\frac{h_k}{2}$ for all $k=1,\dots,7$, are 
	obtained by 
	applying the red-refinement routine~(\textit{cf}.~\cite{Verfuerth13}).
	
	For the resulting series of triangulations $\mathcal{T}_{h_k}$, $k=1,\ldots,7$, we apply~the~\mbox{primal-dual} active set strategy (\textit{cf}.\ Algorithm \ref{alg:semi-smooth})
	to compute the discrete primal solution $u_{h_k}^{cr}\in  K_{h_k}^{cr}$,~${k=1,\ldots,7}$, the discrete Lagrange multiplier $\smash{\overline{\lambda}}_{h_k}^{cr}\in \mathcal{L}^0(\mathcal{S}_{h_k}^{\Gamma_C})$, $k\hspace*{-0.15em}=\hspace*{-0.15em}1,\ldots,7$,~and,~\mbox{subsequently}, resorting to \eqref{eq:generalized_marini}, the discrete dual solution $ z_{h_k}^{rt}\in K_{h_k}^{rt,*}$, $k=1,\ldots,7$. Then,
	we compute~the~error quantities
	\begin{align}\label{eq:error_quantities}
		\left.\begin{aligned}
			e_k^{\textup{tot}}&\coloneqq 	\smash{\rho_{\textup{tot},h_k}^2(\Pi_{h_k}^{cr}u,\Pi_{h_k}^{rt} z)}\,,\\
			e_k^{\textup{gap}}&\coloneqq 	\smash{\rho_{\textup{gap},h_k}^2(\Pi_{h_k}^{cr}u,\Pi_{h_k}^{rt} z)}\,,\\ 
			e_k^{\Delta}&\coloneqq \vert 		\smash{\rho_{\textup{tot},h_k}^2(\Pi_{h_k}^{cr}u,\Pi_{h_k}^{rt} z)-\rho_{\textup{gap},h_k}^2(\Pi_{h_k}^{cr}u,\Pi_{h_k}^{rt} z)\vert}\,,
		\end{aligned}\quad\right\}\quad k=1,\ldots,7\,.
	\end{align} 
	
	For determining the convergence rates, 
	the experimental order of convergence~(EOC),~\textit{i.e.},\vspace*{-1mm}
	\begin{align*}
		\texttt{EOC}_k(e_k)\coloneqq \frac{\log(e_k)-\log(e_{k-1})}{\log(h_k)-\log(h_{k-1})}\,,\quad k=1,\ldots,7\,,\\[-6mm]
	\end{align*}
	where, for every $k=1,\ldots, 7$, we denote by $e_k$, either $e_k^{\textup{gap}}$, $e_k^{\textup{tot}}$, or $e_k^{\Delta}$, respectively, is recorded.
	
	\newpage In Figure \ref{fig:apriori_exp1}, we report the expected  optimal convergence rate of $\texttt{EOC}_k(e_k^{\textup{tot}})\approx\texttt{EOC}_k(e_k^{\textup{gap}})\approx  2$, $k=1,\ldots, 7$,
	\textit{i.e.}, an error decay of order 
	$\mathcal{O}(h_k^2)=  \mathcal{O}(N_k)$, $k=1,\ldots, 7$. In addition, we observe that the \textit{a priori} error identity in Theorem~\ref{thm:apriori_identity}(i) is asymptotically satisfied. More precisely, for 
	the error between the discrete primal-dual total error  (\textit{cf}.\ \eqref{eq:discrete_primal_dual_error}) and the discrete primal-dual gap estimator (\textit{cf}.\ \eqref{eq:discrete_primal_dual_gap_estimator}), we report a convergence rate of about $\texttt{EOC}_k(e_k^{\Delta})\approx 3.7$, $k=1,\ldots, 7$,
	\textit{i.e.}, an error decay of order 
	$\mathcal{O}(h_k^{3.7})=  \mathcal{O}(N_k^{1.85})$, $k=1,\ldots, 7$, which is the quadrature error involved in the computation of the quasi-interpolants $\Pi_{h_k}^{cr}u\in  K_{h_k}^{cr}$,~${k=1,\ldots,7}$,~and~${\Pi_{h_k}^{rt}z\in K_{h_k}^{rt,*}}$,~${k=1,\ldots,7}$.\enlargethispage{15mm}

	 \begin{figure}[H]
		\centering
		\includegraphics[width=7.25cm]{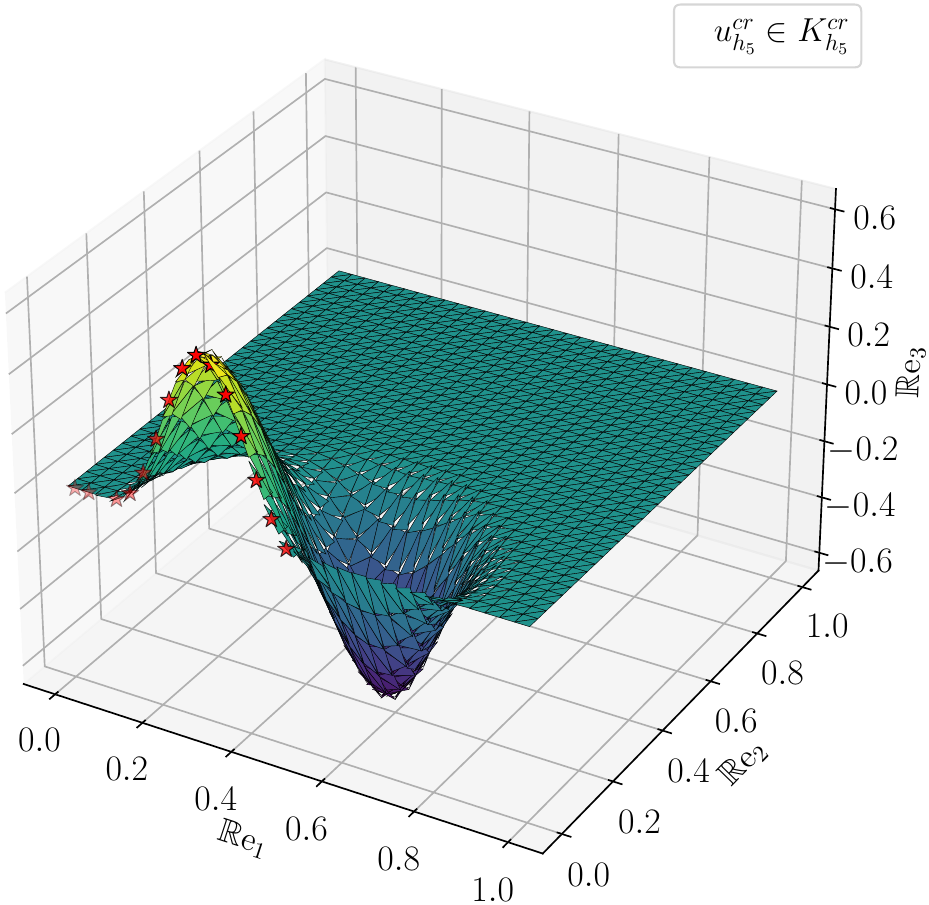}\includegraphics[width=7.25cm]{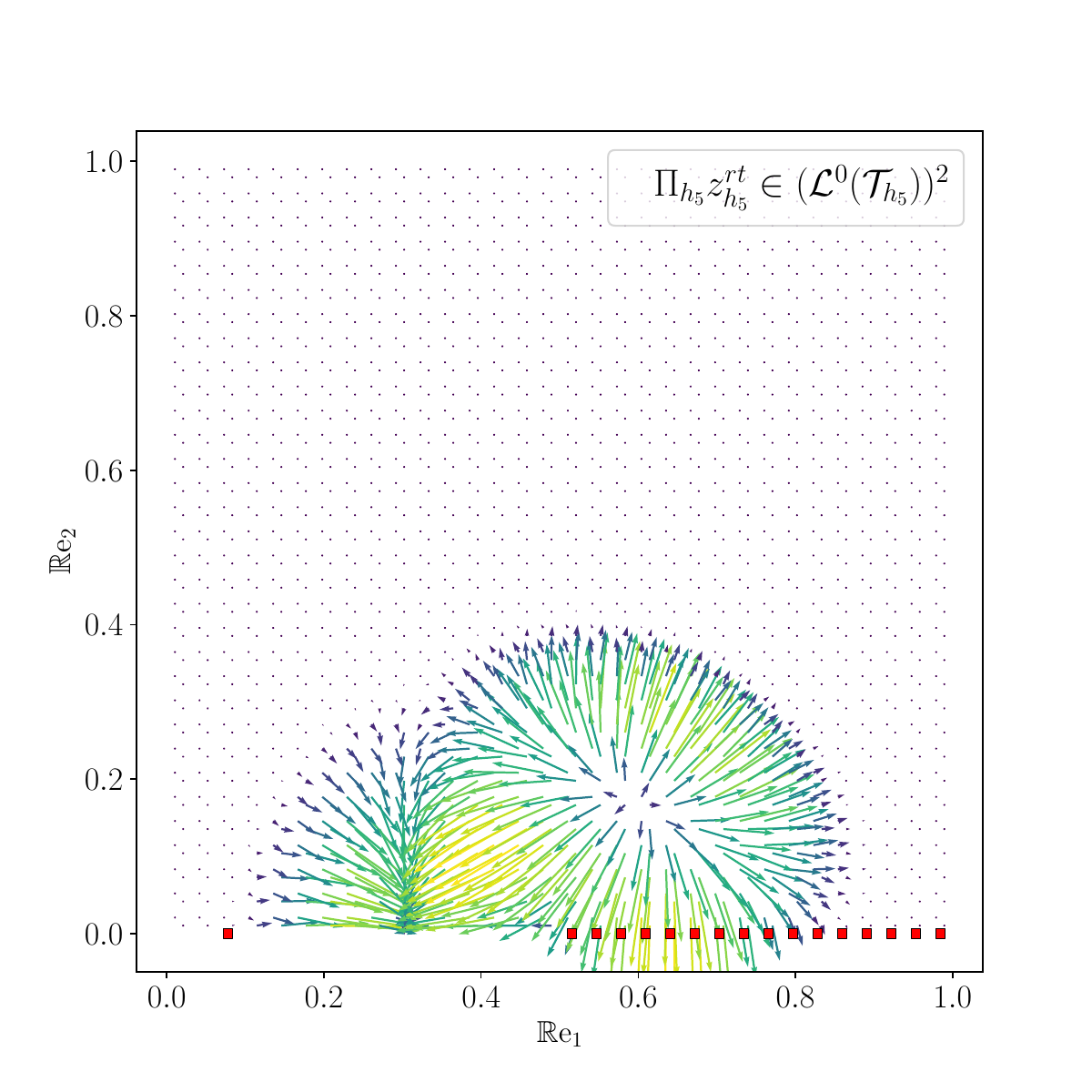}
		\caption{\textit{left}:  discrete primal solution $u_{h_5}^{cr}\in K_{h_5}^{cr}$, where red stars mark sides $S\in \mathcal{T}_{h_5}$~with $\pi_{h_5}u_{h_5}^{cr}|_{S}>0$; \textit{right}:
			\hspace*{-0.1mm}(local) \hspace*{-0.1mm}$L^2$-projection \hspace*{-0.1mm}(onto \hspace*{-0.1mm}$(\mathcal{L}^0(\mathcal{T}_{h_5}))^d$) \hspace*{-0.1mm}of \hspace*{-0.1mm}discrete \hspace*{-0.1mm}dual~\hspace*{-0.1mm}\mbox{solution}~\hspace*{-0.1mm}${z_{h_5}^{rt}\hspace*{-0.1em}\in \hspace*{-0.1em} K_{h_5}^{rt,*}}$, where red squares mark sides $S\in \mathcal{T}_{h_5}$ with $z_{h_5}^{rt}\cdot n|_{S}>0$. We find that $z_{h_5}^{rt}\cdot n\,\pi_{h_5}u_{h_5}^{cr}=0$~a.e.~on~$\Gamma_C$.}
	\end{figure}\vspace*{-5mm}
	 
	 \begin{figure}[H]
	 	\includegraphics[width=14.5cm]{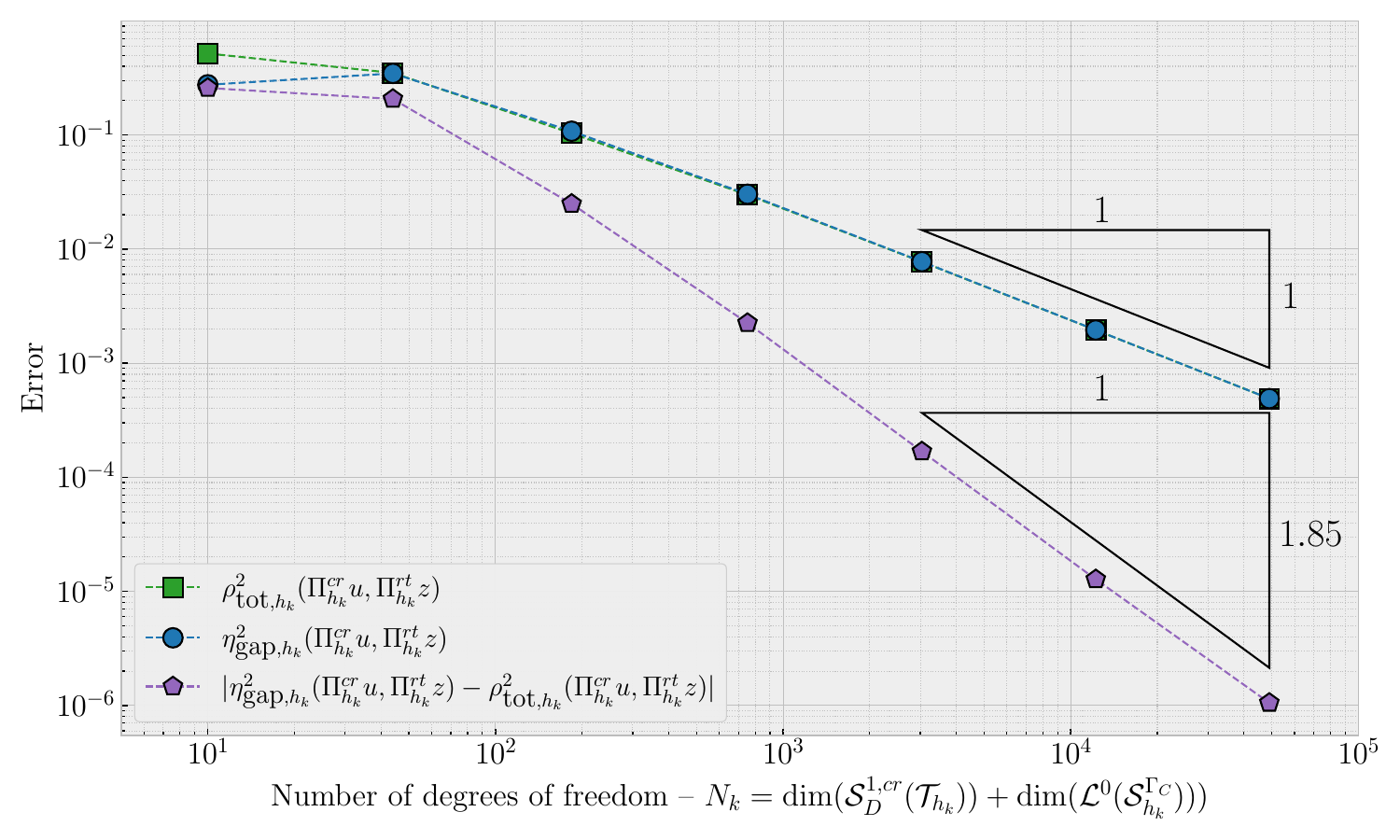}
	 	\caption{Logarithmic plots of the experimental convergence rates of the error quantities \eqref{eq:error_quantities}. We observe the experimental orders of convergence $\texttt{EOC}_k(e_k^{\textup{tot}})\approx\texttt{EOC}_k(e_k^{\textup{gap}})\approx  2$, $k=1,\ldots, 7$, and $ \texttt{EOC}_k(e_k^{\Delta})\approx 3.7$, $k=1,\ldots, 7$.}
	 	\label{fig:apriori_exp1}
	 \end{figure} 
	 
	 \newpage
	 \subsection{Numerical experiments concerning a posteriori error analysis}\enlargethispage{5mm}
	 
	 \hspace*{5mm}In this subsection, we review the theoretical findings of Section \ref{sec:aposteriori}. 
	 
	 More precisely, we employ the local refinement indicators $\eta_{\textup{gap},T}^2\colon K\times K^*_{\textup{tr}_n \in L^2(\Gamma_C)}\to [0,+\infty)$, $T\in \mathcal{T}_h$, where
	 \begin{align*}
	 	K^*_{\textup{tr}_n \in L^2(\Gamma_C)}\coloneqq \big\{y\in K^*\mid y\cdot n\in L^2(\Gamma_C)\big\}\,,
	 \end{align*}
		induced by  the primal-dual gap estimator (\textit{cf}.\ \eqref{eq:primal-dual.1}),  for every $v\in K$, $y\in K^*_{\textup{tr}_n \in L^2(\Gamma_C)}$,~and~${T\in \mathcal{T}_h}$, defined by  
	  \begin{align}\label{def:local_refinement_indicators}
	 		\eta_{\textup{gap},T}^2(v,y)\coloneqq  \tfrac{1}{2}\|\nabla v- y\|_T^2+(y\cdot n, v-\chi)_{\partial T\cap \Gamma_C}\,,
	 \end{align} 
	 in an adaptive mesh-refinement scheme. The definition of the local refinement indicators (\textit{cf}.\ \eqref{def:local_refinement_indicators}) is motivated by 
	the representation of  the primal-dual gap estimator (\textit{cf}.\ \eqref{eq:primal-dual.1})~in~Lemma~\ref{lem:primal_dual_gap_estimator}. 
	 
	 The numerical experiments are based on the following \emph{adaptive algorithm}:
	 
	 \begin{algorithm}[AFEM]\label{alg:afem}
	 	Let $\varepsilon_{\textup{STOP}}>0$, $\theta\in (0,1)$, and  $\mathcal{T}_0$ an initial  triangulation of $\Omega$. Then, for every $k\in \mathbb{N}\cup \{0\}$:
	 	\begin{description}[noitemsep,topsep=1pt,labelwidth=\widthof{\textit{('Estimate')}},leftmargin=!,font=\normalfont\itshape]
	 		\item[('Solve')]\hypertarget{Solve}{}
	 		\hspace*{-0.5mm}Compute \hspace*{-0.15mm}the \hspace*{-0.15mm}discrete \hspace*{-0.15mm}primal \hspace*{-0.15mm}solution \hspace*{-0.15mm}$u_{h_k}^{cr}\hspace*{-0.225em} \in\hspace*{-0.175em} K_{h_k}^{cr}$ \hspace*{-0.15mm}and 
	 		\hspace*{-0.15mm}the \hspace*{-0.15mm}discrete \hspace*{-0.15mm}dual \hspace*{-0.15mm}solution~\hspace*{-0.15mm}${z_{h_k}^{rt}\hspace*{-0.225em}\in\hspace*{-0.175em} K_{h_k}^{rt,*}}\hspace*{-0.175em}$.
	 		
	 		Post-process \hspace*{-0.15mm}$u_{h_k}^{cr}\hspace*{-0.2em}\in\hspace*{-0.175em} K_{h_k}^{cr}$ \hspace*{-0.15mm}and  \hspace*{-0.15mm}$z_{h_k}^{rt}\hspace*{-0.2em}\in\hspace*{-0.175em}  K_{h_k}^{rt,*}$ \hspace*{-0.15mm}to \hspace*{-0.15mm}obtain \hspace*{-0.15mm}a \hspace*{-0.15mm}conforming~\hspace*{-0.15mm}\mbox{approximations}~\hspace*{-0.15mm}${\overline{u}_{h_k}^{cr}\hspace*{-0.2em}\in\hspace*{-0.175em} K}$ \hspace*{-0.15mm}and $\overline{z}_{h_k}^{rt}\in  K^*$ \hspace*{-0.15mm}of \hspace*{-0.15mm}the \hspace*{-0.15mm}primal \hspace*{-0.15mm}solution \hspace*{-0.15mm}$u\in K$ \hspace*{-0.15mm}and \hspace*{-0.15mm}the \hspace*{-0.15mm}dual \hspace*{-0.15mm}solution \hspace*{-0.15mm}$z\in K^*$, \hspace*{-0.15mm}respectively; 
	 		
	 		\item[('Estimate')]\hypertarget{Estimate}{} \hspace*{-0.5mm}Compute \hspace*{-0.15mm}the \hspace*{-0.15mm}resulting \hspace*{-0.15mm}local \hspace*{-0.15mm}refinement \hspace*{-0.15mm}primal-dual \hspace*{-0.15mm}indicators \hspace*{-0.15mm}$\smash{\{\eta^2_{\textup{gap},T}(\overline{u}_{h_k}^{cr},\overline{z}_{h_k}^{rt})\}_{T\in \mathcal{T}_{h_k}}}$. If $
	 		\smash{\eta^2_{\textup{gap}}}(\overline{u}_{h_k}^{cr},\overline{z}_{h_k}^{rt})\leq \varepsilon_{\textup{STOP}}$, then \textup{STOP}; otherwise, continue with step (\hyperlink{Mark}{'Mark'});
	 		\item[('Mark')]\hypertarget{Mark}{}  Choose a minimal (in terms of cardinality) subset $\mathcal{M}_{h_k}\subseteq\mathcal{T}_{h_k}$ such that\vspace*{-0.5mm} 
	 		\begin{align*}
	 			\sum_{T\in \mathcal{M}_{h_k}}{\eta_{\textup{gap},T}^2(\overline{u}_{h_k}^{cr},\overline{z}_{h_k}^{rt})}\ge \theta^2\sum_{T\in \mathcal{T}_{h_k}}{\eta_{\textup{gap},T}^2(\overline{u}_{h_k}^{cr},\overline{z}_{h_k}^{rt})}\,;
	 		\end{align*}
	 		\item[('Refine')]\hypertarget{Refine}{} Perform a conforming refinement of $\mathcal{T}_{h_k}$ to obtain $\mathcal{T}_{h_{k+1}}$~such~that~each~\mbox{element} $T\in \mathcal{M}_{h_k}$  is `refined' in $\mathcal{T}_{h_{k+1}}$.  
	 		Increase~$k\mapsto k+1$~and~continue~with~step~(\hyperlink{Solve}{'Solve'}).
	 	\end{description}
	 \end{algorithm}
	 
	 \begin{remark}[Implementation details]
	 	\begin{description}[noitemsep,topsep=1pt,labelwidth=\widthof{\textit{(iii)}},leftmargin=!,font=\normalfont\itshape]
	 		\item[(i)] The discrete primal solution $u_{h_k}^{cr}\in  K_{h_k}^{cr}$ and the discrete Lagrange multiplier
	 		$\smash{\overline{\lambda}}_{h_k}^{cr}\in \mathcal{L}^0(\mathcal{S}_{h_k}^{\Gamma_C})$ in step (\hyperlink{Solve}{'Solve'}) are computed using the primal-dual active set strategy 
	 		(\textit{cf}.\ Algorithm \ref{alg:semi-smooth}) for the parameter $\alpha = 1$;
	 		\item[(ii)] The computation of the discrete dual solution in step (\hyperlink{Solve}{'Solve'})  is based on the reconstruction formula \eqref{eq:generalized_marini}. Note that $z_{h_k}^{rt}\in K^*$ if and only if $f=f_{h_k}\in \mathcal{L}^0(\mathcal{T}_{h_k})$ and~${g=g_{h_k}\in \mathcal{L}^0(\mathcal{S}_{h_k}^{\Gamma_N})}$;
	 		\item[(iii)] If $\chi|_{\Gamma_D\cup\Gamma_C}\hspace*{-0.175em}\in\hspace*{-0.175em} \mathcal{L}^1(\mathcal{S}_{h_k}^{\Gamma_D}\cup\mathcal{S}_{h_k}^{\Gamma_C})$, \textit{i.e.}, $u_D\hspace*{-0.175em}\in \hspace*{-0.175em}\mathcal{L}^1(\mathcal{S}_{h_k}^{\Gamma_D})$,
	 		then as an admissible~\mbox{approximation}~${\overline{u}_{h_k}^{cr}\hspace*{-0.175em}\in\hspace*{-0.175em} K}$ in step (\hyperlink{Solve}{'Solve'}), we employ a contact boundary modified node-averaging~\mbox{quasi-interpolant}, \textit{i.e.},\enlargethispage{5mm}
	 		\begin{align*}
	 			\overline{u}_{h_k}^{cr}&\coloneqq \sum_{\nu \in \mathcal{N}_{h_k}}{\{ u_{h_k}^{cr}\}_\nu \,\varphi_\nu}\in K\,,\\
	 			\text{where }\{u_{h_k}^{cr}\}_\nu& \coloneqq\begin{cases}
	 				\frac{1}{\textup{card}(\mathcal{T}_{h_k}(\nu))}\sum_{T \in \mathcal{T}_{h_k}(\nu)}{(u_{h_k}^{cr}|_T)(\nu)}&\text{ if }\nu \in \Omega\cup\Gamma_N\,,\\[0.5mm]
	 				\max\Big\{\chi(\nu),\frac{1}{\textup{card}(\mathcal{T}_{h_k}(\nu))}\sum_{T \in \mathcal{T}_{h_k}(\nu)}{(u_{h_k}^{cr}|_T)(\nu)}\Big\}&\text{ if }\nu \in \Gamma_C\,,\\[0.5mm]
	 				u_D(\nu)&\text{ if }\nu \in \Gamma_D\,,
	 			\end{cases}
	 		\end{align*}
	 		where  
	 		$(\varphi_{\nu})_{\nu\in \mathcal{N}_{h_k}}\subseteq \mathcal{S}^1(\mathcal{T}_{h_k})$ denotes the shape basis of $\mathcal{S}^1(\mathcal{T}_{h_k})\coloneqq \mathcal{L}^1(\mathcal{T}_{h_k})\cap H^1(\Omega)$ and, for every $\nu\in \mathcal{N}_{h_k}$, we denote by
	 		$\mathcal{T}_{h_k}(\nu)\coloneqq \{T\in \mathcal{T}_{h_k}\mid \nu \in T\}$ the set of elements containing $\nu$;
	 		\item[(iv)] By the primal-dual gap identity (\textit{cf}.\ Theorem \ref{thm:prager_synge_identity}), the stopping criterion in step  (\hyperlink{Estimate}{'Estimate'}) guarantees accuracy of  $\overline{u}_{h_k}^{cr}\in K$ and $z_{h_k}^{rt}\in K^*$ 
	 		in terms of the primal-dual total error (\textit{cf}.\ \eqref{def:primal_dual_total_error} with Lemma \ref{lem:discrete_strong_convexity_measures}), \textit{i.e.},  $\smash{\rho^2_{\textup{tot}}}(\overline{u}_k^{cr},\overline{z}_k^{rt})=
	 		\smash{\eta^2_{\textup{gap}}}(\overline{u}_{h_k}^{cr},\overline{z}_{h_k}^{rt})\leq \varepsilon_{\textup{STOP}}$ in step (\hyperlink{Estimate}{'Estimate'}).
	 		\item[(i)] If not otherwise specified, we employ the parameter $\theta=\smash{\frac{1}{2}}$ in step (\hyperlink{Estimate}{'Mark'}).
	 		\item[(ii)] To find the  set $\mathcal{M}_{h_k}\hspace*{-0.175em}\subseteq\hspace*{-0.175em} \mathcal{T}_{h_k}$ in step (\hyperlink{Mark}{'Mark'}), we resort to the D\"orfler marking~\mbox{strategy}~(\textit{cf}.~\cite{Doerfler96}).
	 		\item[(iii)] The  (minimal) conforming refinement of $\mathcal{T}_{h_k}$ with respect to  $\mathcal{M}_{h_k}$ in step (\hyperlink{Refine}{'Refine'}) is obtained by deploying the \textit{red}-\textit{green}-\textit{blue}-refinement algorithm (\textit{cf}.\ \cite{Carstensen04}).
	 	\end{description}
	 \end{remark}
	
	\subsubsection{Example with unknown primal and dual solution}
	
	\qquad In this example, let  $\Omega\coloneqq (-1,1)^2 $,  $
	\Gamma_C \coloneqq (-1,1)\times \{-1\}$,  $\Gamma_D \coloneqq 
	((-1,1)\times\{1\})\cup (\{1\}\times (0,1) )$ (\textit{cf}.\ Figure \ref{fig:aposteriori1}(\textit{left})),
	\textit{i.e.}, $\Gamma_N\hspace*{-0.1em}\coloneqq\hspace*{-0.1em} \partial\Omega\hspace*{-0.1em}\setminus\hspace*{-0.1em} (\Gamma_D\hspace*{-0.1em}\cup\hspace*{-0.1em} \Gamma_C )$, 
	 $f\hspace*{-0.1em}=\hspace*{-0.1em}-1\hspace*{-0.1em}\in\hspace*{-0.1em} L^2(\Omega)$, $g\hspace*{-0.1em}=\hspace*{-0.1em}0\hspace*{-0.1em}\in\hspace*{-0.1em} L^2(\Gamma_N)$,~and~${\chi\hspace*{-0.1em}\in\hspace*{-0.1em} H^1_D(\Omega)}$ with $\chi(x)\hspace*{-0.1em}\coloneqq\hspace*{-0.1em}\min\{\frac{1}{2}(\vert x_1\vert-\frac{1}{2}),0\}$ for all $x\hspace*{-0.1em}=\hspace*{-0.1em}(x_1,x_2)^\top\hspace*{-0.1em}\in\hspace*{-0.1em} \Gamma_C$. In this case, the primal~solution~$u\hspace*{-0.1em}\in\hspace*{-0.1em} K$ is not known and since the Dirichlet part $\Gamma_D$ and the Neumann part $\Gamma_N$ touch in 
	 $(1,0)^\top$ with interior angle $\pi$ (\textit{cf}.\ Figure \ref{fig:aposteriori1}(\textit{left})), it 
	 cannot be expected to satisfy $u\in H^2(\Omega)$, so that uniform mesh refinement is expected to yield a reduced error decay rate compared to the quasi-optimal linear error decay rate.

	Algorithm \ref{alg:afem} refines the mesh towards the contact set $\Gamma_C$ and  $(1,0)^\top$ (\textit{cf}. Figure \ref{fig:aposteriori2}), where we expect a singularity. In Figure \ref{fig:aposteriori1}(\textit{right}), 
	 one finds that uniform mesh refinement (\textit{i.e.}, $\theta=1$ in Algorithm~\ref{alg:afem}) yields the 
	 reduced convergence rate $\smash{h_k \sim N_k^{\smash{-\frac{2}{3}}}}$, $k=0,\dots,4$, while adaptive mesh refinement (\textit{i.e.}, $\theta=\frac{1}{2}$ in Algorithm \ref{alg:afem}) 
	 yields  the optimal convergence~rate~$\smash{h_k^2\hspace*{-0.1em}\sim\hspace*{-0.1em} N_k^{-1}}$,~${k\hspace*{-0.1em}=\hspace*{-0.1em}0,\dots,20}$.
	\begin{figure}[H]

		\tikzset{every picture/.style={line width=0.5pt}} 
		
		\hspace*{-2.5mm}

		\hspace*{-1.5mm}\includegraphics[width=7.5cm]{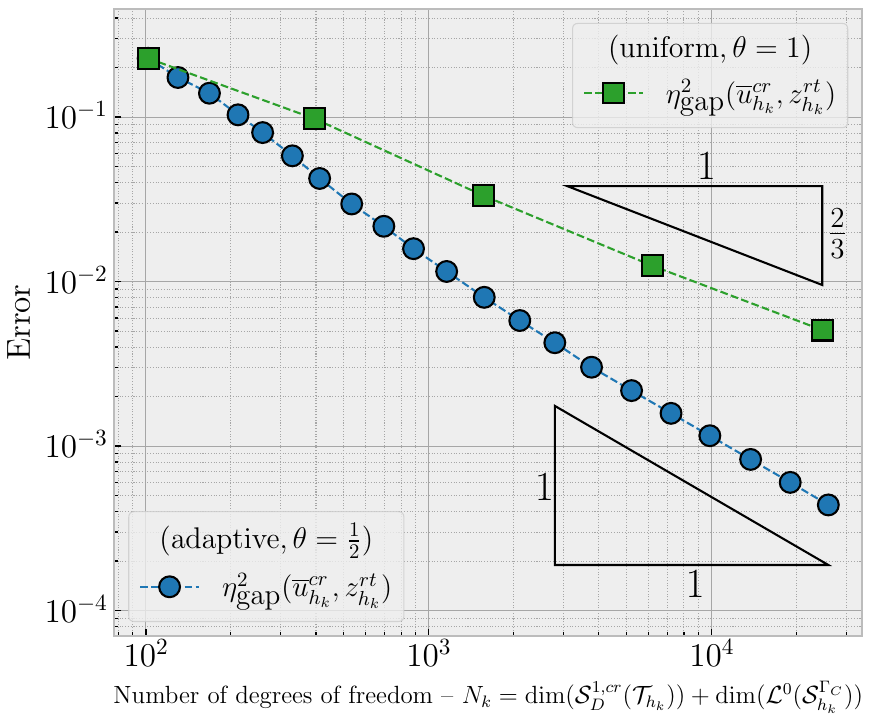}
		\caption{\textit{left}: $\Omega$, $\Gamma_C$, $\Gamma_D$, $\Gamma_N$, and $(1,0)^\top$; 
	\textit{right}: primal-dual gap estimator $\smash{\eta^2_{\textup{gap}}}(\overline{u}_{h_k}^{cr},z_{h_k}^{rt})$ 
			for $k=0,\dots,20$, when employing adaptive mesh refinement  (\textit{i.e.}, $\theta=\frac{1}{2}$ in Algorithm \ref{alg:afem}),  and for $k=0,\dots, 4 $, when employing uniform mesh refinement (\textit{i.e.}, $\theta=1$ in Algorithm \ref{alg:afem}).}
			\label{fig:aposteriori1}
	\end{figure}
	
	\begin{figure}[H]
		\hspace*{-1mm}\includegraphics[width=14.75cm]{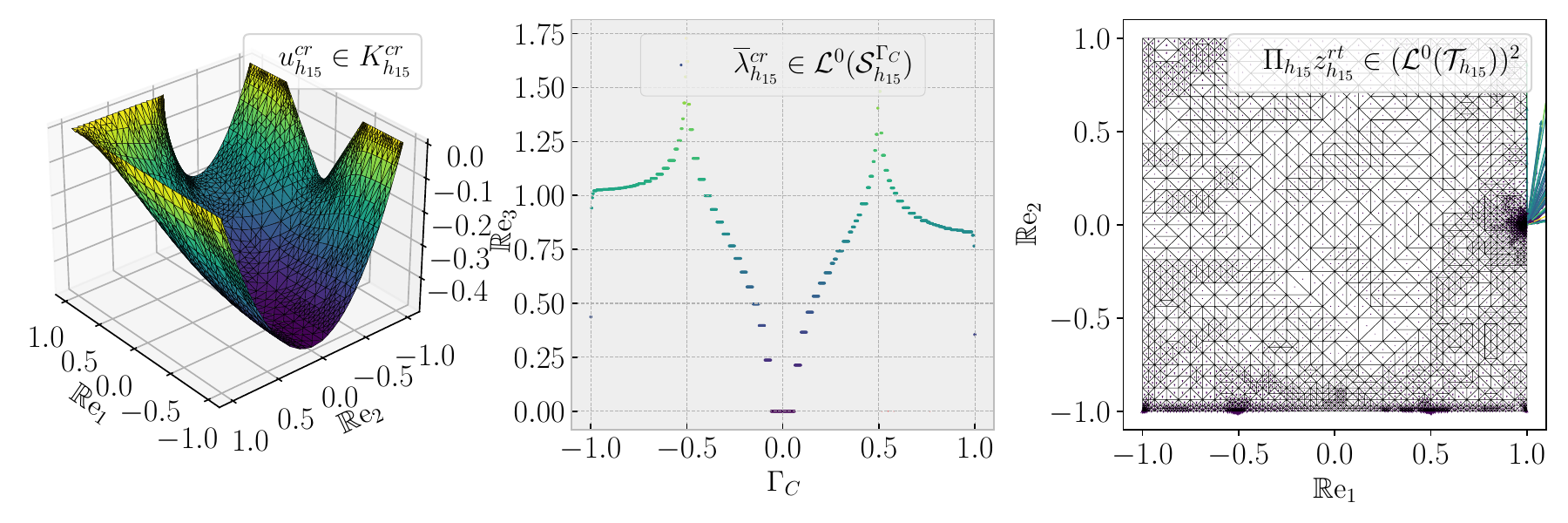}
		\caption{\textit{left}: discrete primal solution $u_{h_{15}}^{cr}\in K_{h_{15}}^{cr}$; MIDDLE: discrete Lagrange multiplier
			$\smash{\overline{\lambda}}_{h_{15}}^{cr}\in \mathcal{L}^0(\mathcal{S}_{h_{15}}^{\Gamma_{C}})$; \textit{right}: (local) $L^2$-projection 
			of the discrete dual solution ${z_{h_{15}}^{rt}\in K_{h_{15}}^{rt,*}}$.}
		\label{fig:aposteriori2}
	\end{figure}
	\newpage
	\appendix
	\section{Appendix}
	
	\hspace*{5mm}In this appendix, we prove a lifting lemma that for a given element-wise constant vector field, a given element-wise constant function, and a given side-wise constant function defined on Neumann sides, 
	jointly satisfying a compatibility condition, 
	provides a Raviart--Thomas vector field whose (local) $L^2$-projection coincides with the element-wise constant vector field, whose divergence coincides with the element-wise constant function, and whose normal traces coincide with the side-wise constant function on Neumann sides. 
	
	\begin{lemma}[lifting]\label{lem:reconstruction}
		Let $\overline{y}_h\in (\mathcal{L}^0(\mathcal{T}_h))^d$, $f_h\in \mathcal{L}^0(\mathcal{T}_h)$, and $g_h\in \mathcal{L}^0(\mathcal{S}_h^{\Gamma_N})$ be such that  for every $v_h\in \mathcal{S}^{1,cr}_D(\mathcal{T}_h)$ with $\pi_h v_h=0$ a.e.\ on $\Gamma_C$, there holds the \emph{compatibility condition}
		\begin{align}\label{lem:reconstruction.0.0}
			(\overline{y}_h,\nabla_h v_h)_{\Omega}-(f_h,\Pi_h v_h)_{\Omega}-(g_h,\pi_h v_h)_{\Gamma_N}=0\,.
		\end{align}
		Then, the vector field $y_h\in  (\mathcal{L}^1(\mathcal{T}_h))^d$ defined by
		\begin{align}\label{lem:reconstruction_formula}
			y_h\coloneqq \overline{y}_h-\frac{f_h}{d}(\textup{id}_{\mathbb{R}^d}-\Pi_h \textup{id}_{\mathbb{R}^d})\quad\text{ a.e.\ in }\Omega\,,
		\end{align}
		satisfies $y_h\in  \mathcal{R}T^0(\mathcal{T}_h)$ and
		\begin{alignat}{2} 
				\Pi_h y_h&=\overline{y}_h&&\quad \text{ a.e.\ in }\Omega\,,\label{lem:reconstruction.0.1}\\
				\textup{div}\, y_h&=-f_h&&\quad \text{ a.e.\ in }\Omega\,,\label{lem:reconstruction.0.2}\\
				y_h\cdot n&=g_h&&\quad \text{ a.e.\ on }\Gamma_N\,.\label{lem:reconstruction.0.3} 
		\end{alignat}
	\end{lemma}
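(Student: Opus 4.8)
The plan is to verify the four assertions in the order \eqref{lem:reconstruction.0.1}, then $y_h\in\mathcal{R}T^0(\mathcal{T}_h)$ (which also yields \eqref{lem:reconstruction.0.2}), and finally \eqref{lem:reconstruction.0.3}; the last two will be obtained by testing the compatibility condition \eqref{lem:reconstruction.0.0} against the Crouzeix--Raviart basis functions $\varphi_{S}$, $S\in\mathcal{S}_h$.

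First I would record the element-wise shape of the reconstruction: writing $x_T\coloneqq\langle\textup{id}_{\mathbb{R}^d}\rangle_T$ for the barycenter of $T\in\mathcal{T}_h$, formula \eqref{lem:reconstruction_formula} reads $y_h|_T=\overline{y}_h|_T-\tfrac{f_h|_T}{d}(\textup{id}_{\mathbb{R}^d}-x_T)$, so $y_h$ is element-wise affine and $\textup{div}(y_h|_T)=-\tfrac{f_h|_T}{d}\,\textup{div}\,\textup{id}_{\mathbb{R}^d}=-f_h|_T$. Since $\Pi_h(\textup{id}_{\mathbb{R}^d}-\Pi_h\textup{id}_{\mathbb{R}^d})=0$ a.e.\ in $\Omega$ and $\overline{y}_h$ is element-wise constant, \eqref{lem:reconstruction.0.1} is immediate. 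The key structural point is of Marini type: if $F\subseteq\partial T$ is a side with constant outward unit normal $n_F$, then $F$ lies in an affine hyperplane orthogonal to $n_F$, so $x\mapsto(x-x_T)\cdot n_F$ is constant on $F$; hence $y_h|_T\cdot n_F$ is constant on every side of $\partial T$. Consequently, once the normal jumps of $y_h$ are shown to vanish across interior sides, we get $y_h\in\mathcal{R}T^0(\mathcal{T}_h)$, and then the element-wise identity $\textup{div}(y_h|_T)=-f_h|_T$ upgrades to the global identity \eqref{lem:reconstruction.0.2}.

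Next I would perform an element-wise Green's formula. For $v_h\in\mathcal{S}^{1,cr}_D(\mathcal{T}_h)$ and $T\in\mathcal{T}_h$, using $\textup{div}(y_h|_T)=-f_h|_T$ and that $y_h|_T\cdot n_F$ is constant on each side $F\subseteq\partial T$,
\begin{align*}
\int_T\nabla v_h\cdot y_h\,\mathrm{d}x-f_h|_T\int_T v_h\,\mathrm{d}x=\sum_{F\subseteq\partial T}(y_h|_T\cdot n_F)\,\vert F\vert\,(\pi_h v_h)|_F\,,
\end{align*}
where $(\pi_h v_h)|_F$ is unambiguous on interior sides because $\pi_h\jump{v_h}=0$ there. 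Summing over $T\in\mathcal{T}_h$, using that $\nabla_h v_h$ and $f_h$ are element-wise constant (so $(\nabla_h v_h,y_h)_{\Omega}=(\nabla_h v_h,\overline{y}_h)_{\Omega}$ by \eqref{lem:reconstruction.0.1} and $\sum_T f_h|_T\int_T v_h=(f_h,\Pi_h v_h)_{\Omega}$), collecting the two contributions at each interior side into $\jump{y_h\cdot n}_S$ and the single contribution at each boundary side into $y_h\cdot n$, and exploiting $\pi_h v_h=0$ a.e.\ on $\Gamma_D$, I arrive at
\begin{align*}
(\overline{y}_h,\nabla_h v_h)_{\Omega}-(f_h,\Pi_h v_h)_{\Omega}=\sum_{S\in\mathcal{S}_h^{i}}\jump{y_h\cdot n}_S\,\vert S\vert\,(\pi_h v_h)|_S+\sum_{S\in\mathcal{S}_h^{\Gamma_N}\cup\mathcal{S}_h^{\Gamma_C}}(y_h\cdot n)|_S\,\vert S\vert\,(\pi_h v_h)|_S\,.
\end{align*}

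Finally I would specialise $v_h=\varphi_{S_0}$, noting that $\varphi_{S_0}\in\mathcal{S}^{1,cr}_D(\mathcal{T}_h)$ with $(\pi_h\varphi_{S_0})|_{S'}=\delta_{S_0,S'}$ for all $S'\in\mathcal{S}_h$ (the side average of an affine function equals its midpoint value), so in particular $\pi_h\varphi_{S_0}=0$ a.e.\ on $\Gamma_C$ whenever $S_0\in\mathcal{S}_h^{i}\cup\mathcal{S}_h^{\Gamma_N}$, and additionally $\pi_h\varphi_{S_0}=0$ a.e.\ on $\Gamma_N$ when $S_0\in\mathcal{S}_h^{i}$. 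For $S_0\in\mathcal{S}_h^{i}$, the compatibility condition \eqref{lem:reconstruction.0.0} makes the left-hand side of the last display vanish while its right-hand side collapses to $\jump{y_h\cdot n}_{S_0}\vert S_0\vert$, so $\jump{y_h\cdot n}_{S_0}=0$; as $S_0$ was arbitrary, $y_h\in\mathcal{R}T^0(\mathcal{T}_h)$ and \eqref{lem:reconstruction.0.2} follows. For $S_0\in\mathcal{S}_h^{\Gamma_N}$, \eqref{lem:reconstruction.0.0} turns the left-hand side into $(g_h,\pi_h\varphi_{S_0})_{\Gamma_N}=g_h|_{S_0}\vert S_0\vert$ while the right-hand side collapses to $(y_h\cdot n)|_{S_0}\vert S_0\vert$, which gives \eqref{lem:reconstruction.0.3}. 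The only genuinely non-routine ingredients are the Marini-type observation and the elementary but careful bookkeeping showing that the side-midpoint values $(\pi_h v_h)|_S$, $S\in\mathcal{S}_h^{i}\cup\mathcal{S}_h^{\Gamma_N}$, are free parameters realised by the functions $\varphi_{S_0}$; everything else is Green's formula together with linear algebra.
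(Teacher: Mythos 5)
Your proof is correct, and it takes a genuinely different route from the paper's. The paper first chooses an auxiliary field $\widehat{y}_h\in\mathcal{R}T^0(\mathcal{T}_h)$ with $\textup{div}\,\widehat{y}_h=-f_h$ (using surjectivity of the discrete divergence, which relies on $\vert\Gamma_D\vert>0$), shows via the discrete integration-by-parts formula \eqref{eq:pi} and the compatibility condition \eqref{lem:reconstruction.0.0} that the element-wise constant field $y_h-\widehat{y}_h$ is $L^2$-orthogonal to all $\nabla_h v_h$ with $\pi_h v_h=0$ on $\partial\Omega$, and then invokes the discrete Helmholtz--Weyl decomposition of \cite{BW21} to conclude $y_h-\widehat{y}_h\in\textup{ker}(\textup{div}|_{\mathcal{R}T^0(\mathcal{T}_h)})$, hence $y_h\in\mathcal{R}T^0(\mathcal{T}_h)$ with \eqref{lem:reconstruction.0.2}; the Neumann condition \eqref{lem:reconstruction.0.3} is then read off by testing with $\varphi_S$, $S\in\mathcal{S}_h^{\Gamma_N}$, exactly as in your last step. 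You instead verify $y_h\in\mathcal{R}T^0(\mathcal{T}_h)$ directly: the Marini-type observation that $y_h|_T\cdot n_F$ is constant on each side, an element-wise Green's formula, and testing against the Crouzeix--Raviart basis functions $\varphi_{S_0}$ for interior sides force all normal jumps $\jump{y_h\cdot n}_{S_0}$ to vanish. Your route is more elementary and self-contained --- it needs neither the surjectivity of the discrete divergence nor the orthogonality machinery of \cite{BW21}, and in effect re-derives the discrete integration-by-parts formula \eqref{eq:pi} rather than assuming it --- at the cost of the explicit side-by-side bookkeeping; the paper's argument is shorter given that machinery. The individual ingredients you rely on ($\Pi_h(\textup{id}_{\mathbb{R}^d}-\Pi_h\textup{id}_{\mathbb{R}^d})=0$, constancy of $(x-x_T)\cdot n_F$ on $F$, well-definedness of $(\pi_h v_h)|_S$ on interior sides via $\pi_h\jump{v_h}=0$, and $\pi_h\varphi_{S_0}=\delta_{S_0,S'}$ on $S'$) are all sound.
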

	\begin{proof}
		From the definition  \eqref{lem:reconstruction_formula}, it follows directly that
		\eqref{lem:reconstruction.0.1} is satisfied.
		Since,~due~to~$\vert \Gamma_D\vert>0$, $\textup{div}\colon \mathcal{R}T^0(\mathcal{T}_h)\to \mathcal{L}^0(\mathcal{T}_h)$ 
		is surjective, there exists $\widehat{y}_h\in \mathcal{R}T^0(\mathcal{T}_h)$ such that $\textup{div}\,\widehat{y}_h=-f_h$~a.e.~in~$\Omega$. 
		Then, using the discrete integration-by-parts formula \eqref{eq:pi} and \eqref{lem:reconstruction.0.0}, for~every~$v_h\in \mathcal{S}^{1,cr}_D(\mathcal{T}_h)$ with $\pi_hv_h=0$ a.e.\ on $\Gamma_N\cup\Gamma_C$, we find that
		\begin{align}\label{lem:reconstruction.1}
			\begin{aligned}
				(\Pi_h\widehat{y}_h,\nabla_h v_h)_{\Omega}&=-(\textup{div}\,\widehat{y}_h,\Pi_h v_h)_{\Omega}\\&=(f_h,\Pi_h v_h)_{\Omega}\\&=(\overline{y}_h,\nabla_h v_h)_{\Omega}
				\,.
			\end{aligned}
		\end{align}
		Using \eqref{lem:reconstruction.0.1} in \eqref{lem:reconstruction.1}, for every $v_h\in \mathcal{S}^{1,cr}_D(\mathcal{T}_h)$ with $\pi_hv_h=0$ a.e.\ on $\Gamma_N\cup\Gamma_C$, we arrive at
		\begin{align}\label{lem:reconstruction.2}
			(y_h-\widehat{y}_h,\nabla_h v_h)_{\Omega}=(\Pi_h y_h-\Pi_h \widehat{y}_h,\nabla_h v_h)_{\Omega}=0\,.
		\end{align}
		On the other hand, due to $\textup{div}(y_h-\widehat{y}_h)=0$ in $T$ for all $T\in \mathcal{T}_h$, we have that ${y_h-\widehat{y}_h\in(\mathcal{L}^0(\mathcal{T}_h))^d}$.
		By \eqref{lem:reconstruction.2} and the discrete Helmholtz--Weyl decomposition (\textit{cf}.\ \cite[Sec.\ 2.4]{BW21}), 
		we conclude that 
		\begin{align*}
			y_h-\widehat{y}_h\in 
			\textup{ker}(\textup{div}|_{\mathcal{R}T^0(\mathcal{T}_h)})\,,
		\end{align*} 
		and, thus, $y_h\hspace*{-0.05em}\in\hspace*{-0.05em} \mathcal{R}T^0(\mathcal{T}_h)$ with \eqref{lem:reconstruction.0.2}.~In~addition, 
		for every $v_h\hspace*{-0.05em}\in\hspace*{-0.05em} \mathcal{S}^{1,cr}_D(\mathcal{T}_h)$~with~${\pi_h v_h\hspace*{-0.05em}=\hspace*{-0.05em}0}$~a.e.~on~$\Gamma_C$, the discrete integration-by-parts formula~\eqref{eq:pi} and \eqref{lem:reconstruction.0.0} yield that
		\begin{align}\label{lem:reconstruction.3}
			\begin{aligned} 
				(y_h\cdot n, \pi_h v_h)_{\Gamma_N}&=(\Pi_h y_h,\nabla_h v_h)_{\Omega}+(\textup{div}\,y_h,\Pi_h v_h)_{\Omega}
				\\&=(\overline{y}_h,\nabla_h v_h)_{\Omega}-(f_h,\Pi_h v_h)_{\Omega}\\&= 
				(g_h, \pi_h v_h)_{\Gamma_N}\,.
			\end{aligned}
		\end{align}
		Thus, choosing $v_h\hspace*{-0.1em}=\hspace*{-0.1em}\varphi_S$ for all $S\hspace*{-0.1em}\in\hspace*{-0.1em} \mathcal{S}_h^{\Gamma_N}$ in \eqref{lem:reconstruction.3} and exploiting that $\pi_h\varphi_S\hspace*{-0.1em}=\hspace*{-0.1em}\chi_S$, for~every~$S\hspace*{-0.1em}\in\hspace*{-0.1em} \mathcal{S}_h^{\Gamma_N}$, we find that \eqref{lem:reconstruction.0.3} is satisfied. 
	\end{proof} 
	
	{\small\setlength{\bibsep}{0pt plus 0.0ex}
		\providecommand{\bysame}{\leavevmode\hbox to3em{\hrulefill}\thinspace}
		\providecommand{\noopsort}[1]{}
		\providecommand{\mr}[1]{\href{http://www.ams.org/mathscinet-getitem?mr=#1}{MR~#1}}
		\providecommand{\zbl}[1]{\href{http://www.zentralblatt-math.org/zmath/en/search/?q=an:#1}{Zbl~#1}}
		\providecommand{\jfm}[1]{\href{http://www.emis.de/cgi-bin/JFM-item?#1}{JFM~#1}}
		\providecommand{\arxiv}[1]{\href{http://www.arxiv.org/abs/#1}{arXiv~#1}}
		\providecommand{\doi}[1]{\url{https://doi.org/#1}}
		\providecommand{\MR}{\relax\ifhmode\unskip\space\fi MR }
		\providecommand{\MRhref}[2]{%
			\href{http://www.ams.org/mathscinet-getitem?mr=#1}{#2}
		}
		\providecommand{\href}[2]{#2}

	}
\end{document}